\newtheorem{thm}{Theorem}
\newtheorem{lemma}[thm]{Lemma}
\newtheorem{prop}[thm]{Proposition}
\newtheorem{cor}[thm]{Corollary}
\newtheorem{defn}[thm]{Definition}
\newtheorem{question}[thm]{Problem}
\theoremstyle{definition}
\newtheorem{exmp}[thm]{Example}
\newtheorem{example}[thm]{Example}
\newtheorem{algorithm}[thm]{Algorithm}
\newtheorem{rmk}[thm]{Remark}
\numberwithin{equation}{section}
\newcommand{\Z}{\mathbf{Z}}
\newcommand{\R}{\mathbf{R}}
\newcommand{\F}{\mathbf{F}}
\newcommand{\chs}[2]{\operatorname{\chi}_{#1}(#2)}
\newcommand{\m}{morphism}
\newcommand{\gen}[1]{{\langle}#1{\rangle}}
\newcommand{\GL}[2]{\mathrm{GL}(#1,\F_{#2})}
\title{Chromatic Numbers of Simplicial Manifolds}
\author{Frank H.~Lutz}
\address{
Technische Universit{\"a}t Berlin\\
Institut f{\"u}r Mathematik, MA 5-2\\
Stra{\ss}e des 17. Juni 136\\
D--10623 Berlin} 
\email{lutz@math.tu-berlin.de}
\urladdr{http://page.math.tu-berlin.de/~lutz/}
\author{Jesper M.~M\o ller}
\address{
K\o benhavns Universitet\\
  Matematisk Institut\\
  Universitetsparken 5\\
  DK--2100 K\o benhavn}
\email{moller@math.ku.dk}
\urladdr{htpp://www.math.ku.dk/~moller}
\thanks{Supported by the Danish National Research Foundation (DNRF)
  through the Centre for Symmetry and Deformation and by VILLUM FONDEN
  through the network for Experimental Mathematics in Number Theory,
  Operator Algebras, and Topology}
\begin{document}
\maketitle
\tableofcontents

\begin{abstract}
  Higher chromatic numbers $\chi_s$ of simplicial complexes naturally generalize the chromatic number $\chi_1$ of a graph.
  In any fixed dimension $d$,  the $s$-chromatic number $\chi_s$ of $d$-complexes can become arbitrarily large 
  for $s\leq\lceil d/2\rceil$ \cite{bing83,HeisePanagiotouPikhurkoTaraz2014}. In contrast, $\chi_{d+1}=1$, and only little is known
  on  $\chi_s$ for $\lceil d/2\rceil<s\leq d$.
  
  A particular class of $d$-complexes are triangulations of $d$-manifolds. As a consequence of the Map Color Theorem
  for surfaces \cite{Ringel1974}, the $2$-chromatic number of any fixed surface is finite. However, by combining results 
  from the literature, we will see that $\chi_2$ for surfaces becomes arbitrarily large with growing genus. The proof for this is via
  Steiner triple systems and is non-constructive. In particular, up to now, no explicit triangulations of surfaces
  with high $\chi_2$ were known.
  
  We show that orientable surfaces of genus at least~$20$ and non-orientable surfaces of genus at least $26$ 
  have a $2$-chromatic number of at least~$4$.  Via a projective Steiner triple systems, we construct  
  an explicit triangulation of a non-orientable surface of genus $2542$ and with
  face vector $f=(127,8001,5334)$ that has $2$-chromatic number~$5$ or~$6$. 
  We also give orientable examples with $2$-chromatic numbers $5$ and $6$.

  For $3$-dimensional manifolds, an iterated moment curve construction \cite{HeisePanagiotouPikhurkoTaraz2014}
  along with embedding results \cite{bing83} can be used to produce triangulations with arbitrarily large $2$-chromatic
  number, but of tremendous size.  
  Via a topological version of the geometric construction of \cite{HeisePanagiotouPikhurkoTaraz2014}, 
  we obtain a rather small triangulation of the $3$-dimensional sphere $S^3$ 
  with face vector $f=(167,1579,2824,1412)$ and $2$-chromatic number~$5$. 
\end{abstract}

\section{Introduction}
\label{sec:intro}

Let $G=(V,E)$ be a (finite) simple graph with vertex set $V$ and edge
set $E$. The \emph{$1$-chromatic number} $\chi_1(G)$ of~$G$, i.e., the
(standard) chromatic number $\chi(G)$ of~$G$, is the number of colors
needed to color the vertices of $G$ such that no two adjacent vertices
are colored with the same color.  For given $G$, it is NP-hard to
compute $\chi_1(G)$ \cite{garey76,Karp1972}. It is even hard to approximate
$\chi_1(G)$ \cite{Zuckerman2007}, and rather few tools (from algebra, linear algebra,
topology) are available to provide lower bounds for $\chi_1(G)$; see
the surveys \cite{toft95,MatousekZiegler2004} for a discussion. Upper bounds for $\chi_1(G)$
usually are obtained by searching for or constructing explicit colorings of $G$.
 
 
Let $V$ be a finite set, and $E$ be a family of subsets of $V$.
The ordered pair $H=(V,E)$ is called a \emph{hypergraph} with \emph{vertex set} $V$
and \emph{edge set} $E$. If all edges of $H$ have size~2, the hypergraph $H$ is a graph,
while, in general, we can think of $H$ to define a simplicial complex $K$ on the vertex set $V$
with facet list $E$ (not necessarily inclusion-free).

A hypergraph $H=(V,E)$ is \emph{$r$-uniform} if all edges of $H$ are of size $r$, thus defining
a \emph{pure} simplicial complex $K$ of dimension $r-1$. Conversely, every pure
$d$-dimensional simplicial complex can be encoded as a $(d+1)$-uniform hypergraph.

Colorings of hypergraphs generalize colorings of graphs in various ways; see, for example, \cite{KrivelevichSudakov1998}.
The \emph{$s$-chromatic number} $\chi_s(H)$ [$\chi_s(K)$] of a hypergraph $H$ [a finite simplicial complex $K$]
is the minimal number of colors needed to color the vertices $V$ so that
subsets of the sets in $E$ with at most $s$ elements are allowed to be monochrome,
but no $(s+1)$-element subset is monochrome. 

For $s=1$, the vertices of every edge of $E$
are required to be colored with distinct colors, and $\chi_1(H)$ is called the \emph{strong chromatic number} of $H$
[alternatively, $\chi_1(K)$ is the (standard) chromatic number $\chi({\rm skel}_1(K))$ of the $1$-skeleton of $K$]. 
For $s=r-1$, in the case of $r$-uniform hypergraphs,
$\chi_{r-1}(H)$ is called the \emph{weak chromatic number} of $H$. We say that a hypergraph  $H$ [a simplicial complex $K$]
is \emph{$(k,l)$-colorable} if the vertices of $H$ [of $K$] can be colored with $k$ colors so that there
are no monochromatic $(l+1)$-element sets.

\begin{defn}
Let $M$ be a triangulable (closed, connected) manifold of dimension $d$ and $1\leq s\leq d+1$.
Then  
$$\chi_s(M)=\mbox{\rm sup}\{\,\chi_s(K)\,|\,\mbox{\rm $K$ is a triangulation of $M$}\}$$
is the $s$-chromatic number of $M$. 
\end{defn}
The $s$-chromatic numbers of a manifold form a descending sequence
$$\chi_1(M)\geq \chi_2(M)\geq\dots\geq\chi_d(M)\geq\chi_{d+1}(M)=1,$$
where some of these numbers are finite and some are infinite:
\begin{compactitem}
\item[(1)] $\chi_1(S^1)=3$,
\item[(2)] $\chi_1(M^2) =  \left\lfloor \frac{7+\sqrt{49-24\chi_E(M^2)}}{2} \right\rfloor$ \cite{appel&haken:76,Ringel1961,Ringel1974} 
for a surface $M^2$ different from the Klein bottle, 
where $\chi_E(M^2)$ denotes the Euler characteristics of $M^2$, and $\chi_1(\mbox{Klein bottle})=6$, 
\item[(3)] $\chi_1(M^3)=\infty$ for any $3$-manifold $M^3$ \cite{Walkup1970},
\item[(4)] $\chi_2(M^3)=\infty$ for any $3$-manifold $M^3$ (Section~\ref{sec:colASC}),  
\item[(5)] $\chs s{{M}^d} = \infty$ when $M^d$ a triangulable $d$-manifold with $d \geq 3$ and $s \leq \lceil
  d/2 \rceil$ (Section~\ref{sec:colASC}).
\end{compactitem}
Further, we have for surfaces and general $2$-dimensional simplicial complexes:
\begin{compactitem}
\item[(6)] $\chi_1(\mbox{arbitrary surface})=\infty$ \cite{Ringel1961,Ringel1974},
\item[(7)] $\chi_2(\mbox{arbitrary $2$-dimensional simplicial complex})=\infty$ \cite{deBrandesPhelpsRoedl1982,HeisePanagiotouPikhurkoTaraz2014},
\item[(8)] $\chi_2(\mbox{arbitrary surface})=\infty$ (Section~\ref{sec:2-2}).
\end{compactitem}

\medskip

Case (1): Let $C_n$ be a cycle of length $n$. Then $\chi_1(C_n)=\chi(C_n)=2$ for even $n$
and $\chi_1(C_n)=\chi(C_n)=3$ for odd~$n$. Thus, for the $1$-dimensional sphere $S^1$, $\chi_1(S^1)=3.$

Case (2): We have 
\begin{equation}
\chi_1(S^2)=4
\end{equation}
for the $2$-dimensional
sphere $S^2$ by the $4$-color theorem \cite{appel&haken:76}, 
while for surfaces $M^2\neq S^2$ and $M^2$ different from the Klein bottle, 
the map color theorem \cite{ringel&young:68} states
\begin{equation}
  \chi_1(M^2) = 
  \left\lfloor \frac{7+\sqrt{49-24\chi_E(M^2)}}{2}
  \right\rfloor.
\label{eq:map_color}
\end{equation}
In the case of the Klein bottle, 
\begin{equation}
\chi_1(\mbox{Klein bottle})=6.
\end{equation}
Thus, the $1$-chromatic numbers of surfaces are completely determined.

Case (3): Walkup~\cite{Walkup1970} proved that all (closed, connected) $3$-manifolds have neighborly 
triangulations and thus  have $\chi_1=\infty$.  Alternatively, Case (3), and more generally Case (5) 
for $s < \lceil d/2 \rceil$,  follow (for odd $d$; for even $d$ see Section~\ref{sec:colASC}) 
by taking connected sums of the boundaries $\partial CP(m,d+1)$  of neighborly 
cyclic $(d+1)$-polytopes $CP(m,d+1)$  on $m$ vertices
with some triangulation of~$M^d$.

Cases (4) and (7):  Heise, Panagiotou, Pikhurko and Taraz~\cite{HeisePanagiotouPikhurkoTaraz2014} 
provided an inductive geometric construction via the moment curve (this construction was also found independently 
by Jan Kyn\v{c}l  and Josef Cibulka as pointed out to us by Martin Tancer)
to yield $2$-dimensional complexes with arbitrary high $2$-chromatic numbers. 
In Case~(4), PL~topology \cite[Theorem I.2.A]{bing83}  (we are grateful to Karim Adiprasito
for reminding us of this result) is used to extend
geometrically any linearly embedded $2$-dimensional simplicial complex in ${\mathbb R}^3$ 
to a triangulation of the $3$-dimensional ball~$B^3$
that still contains the original $2$-dimensional complex as a subcomplex. If the initial complex 
has high $2$-chromatic number then also the triangulated ball it is contained in. 
By coning off the boundaries of the resulting balls,
we obtain triangulations of $S^3$ with arbitrary high $2$-chromatic number.

Case (5) with $s \leq \lceil d/2 \rceil$ is a generalization of Case (4); see Section~\ref{sec:colASC}.

Case (6) follows as an immediate consequence of Case (2).

Case (7): Steiner triple systems yield examples with arbitrary large $\chi_2$ \cite{deBrandesPhelpsRoedl1982},
                as do the examples of Heise, Panagiotou, Pikhurko and Taraz~\cite{HeisePanagiotouPikhurkoTaraz2014}.

Case (8): Although we might expect that neighborly triangulations of surfaces should have high $2$-chromatic number, 
we show in Section~\ref{sec:bose}, via Bose Steiner triple systems, that there is an infinite series of neighborly triangulations  of surfaces with $\chi_2=3$. 
In contrast, we see in Section~\ref{sec:2-2}  that any Steiner triple system can be extended to a triangulation of a surface
(orientable or non-orientable). Thus, Steiner triple systems with high $\chi_2$ give rise to triangulated surfaces with high $\chi_2$.

In Section~\ref{sec:2-2} we discuss $2$-chromatic numbers of surfaces. In particular, we show that $\chi_2(\mbox{arbitrary surface})=\infty$
(via embeddings of Steiner triple systems with high $\chi_2$) and provide explicit examples of orientable triangulated surfaces 
with $\chi_2=5$ and $\chi_2=6$.
Section~\ref{sec:steiner}  introduces basic concepts of Steiner triple systems
and transversals, which might also be of independent interest in topological design theory.
Section~\ref{sec:bose} discusses neighborly Bose Steiner triple systems with $\chi_2=3$.
In Section~\ref{sec:PGd2}, we use projective Steiner triple systems to construct a triangulation 
of a closed (non-orientable) surface with  $f=(127,8001,5334)$ and $2$-chromatic number $5$ or $6$. 
In Section~\ref{sec:colASC}, by combining results of~\cite{HeisePanagiotouPikhurkoTaraz2014} and~\cite{bing83},
we show that $\chs s {M^d}= \infty$ for  triangulable $d$-manifolds $M^d$  
with $d \geq 3$ and $s \leq \lceil d/2 \rceil$.
In Section~\ref{sec:highspheres}, we provide a topological variant of the geometric iterated moment curve construction of \cite{HeisePanagiotouPikhurkoTaraz2014} to obtain a small triangulation with $f=(167,1579,2824,1412)$ of the $3$-dimension sphere $S^3$ with $2$-chromatic number equal to $5$.

\section{The $2$-chromatic number of surfaces}
\label{sec:2-2}

Let $M^2$ be a surface and $K$ a triangulation of $M^2$ with color classes $1,\dots,\chi_1(K)$, where $\chi_1(K)\leq\chi_1(M^2)$.
If we consecutively merge color classes $1$ with $2$, $3$ with $4$, etc., we obtain a $2$-coloring of $K$ 
with $\left\lceil \frac{\chi_1(K)}2 \right\rceil$ colors as no three vertices of a triangle of~$K$ get colored with the same color.
Thus, for any surface $M^2$,
\begin{equation}
\label{upperbound_chi2}
\chi_2(M^2) \leq \left\lceil \frac{\chi_1(M^2)}2 \right\rceil.
\end{equation}

In any $2$-coloring of a triangulation $K$ of $M^2$,
any individual triangle requires at least two colors,
giving the trivial lower bound
\begin{equation}
\chi_2(M^2)\geq 2.
\end{equation}

In the case of the $2$-sphere $S^2$,
by combining the two bounds, $2\leq\chi_2(S^2)\leq\left\lceil \frac{\chi_1(S^2)}2\right\rceil=2$, we obtain
\begin{equation}
\chi_2(S^2)=2.
\end{equation}

\begin{figure}[htb]
\begin{center}
\includegraphics[width=8.5cm]{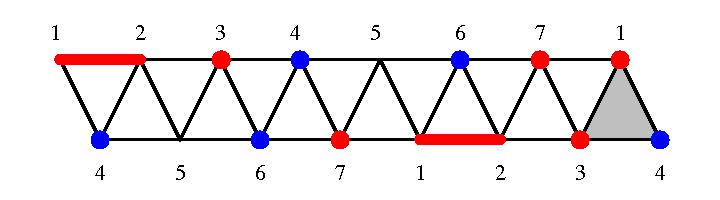}\\
\includegraphics[width=8.5cm]{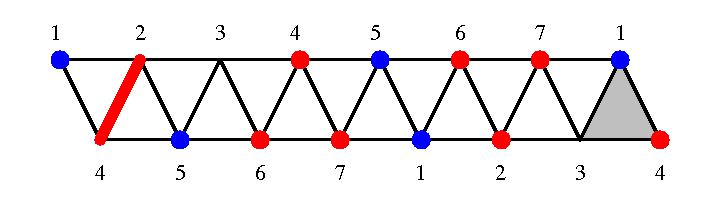}\\
\includegraphics[width=8.5cm]{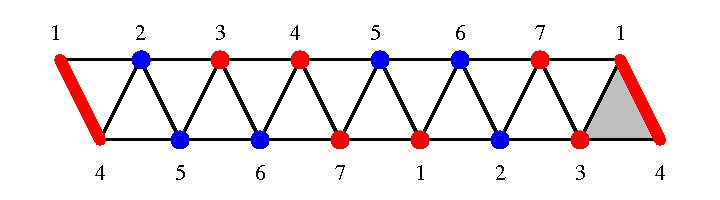}\\
\includegraphics[width=8.5cm]{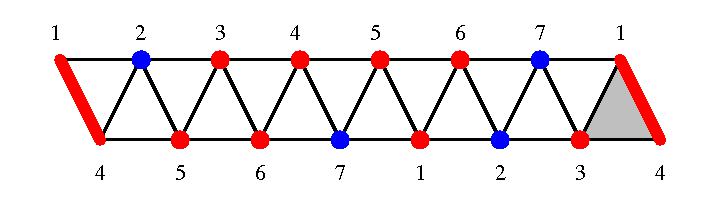}
\end{center}
\caption{The 7-vertex triangulation of the torus with monochromatic triangles in any $(2,2)$-coloring.}
\label{fig:torus}
\end{figure}

\begin{figure}[htb]
\begin{center}
\includegraphics[width=5cm]{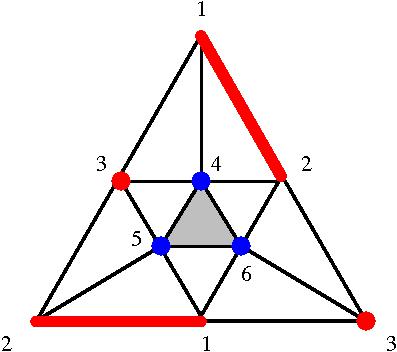}
\end{center}
\caption{The 6-vertex triangulation of the real projective plane with a monochromatic triangle.}
\label{fig:rp_2_6}
\end{figure}

For any surface $M^2\neq S^2$, K\"undgen and Ramamurthi~\mbox{\cite{kundgen_ramamurthi02}}
proved that 
\begin{equation}
\label{eq:KuendgenRamamurthi}
\chi_2(M^2)\geq 3.
\end{equation}

\begin{figure}[t]
  \centering
  
\begin{tikzpicture}[vertex/.style= {shape=circle,  
   fill={#1!100}, minimum size =
  8pt, inner sep =0pt,draw}, vertex/.default=black, scale=.25]  

\coordinate [label={[label distance=4pt] 90: $1$}] (a1) at (0,15);  
\coordinate [label={[label distance=4pt] 90: $2$}] (a2) at (5,15);
\coordinate [label={[label distance=4pt] 90: $3$}] (a3) at (10,15);
\coordinate [label={[label distance=4pt] 90: $1$}] (a4) at (15,15);

\coordinate [label={[label distance=4pt] 180: $4$}] (b1) at (0,10);  
\coordinate [label={[label distance=4pt] 135: $6$}] (b2) at (5,10);
\coordinate [label={[label distance=4pt] 0: $5$}] (b3) at (15,10);

\coordinate [label={[label distance=4pt] 180: $5$}] (c1) at (0,5);  
\coordinate [label={[label distance=4pt] 45: $7$}] (c2) at (5,5);
\coordinate [label={[label distance=4pt] 90: $8$}] (c3) at (10,5);
\coordinate [label={[label distance=4pt] 0: $4$}] (c4) at (15,5);

\coordinate [label={[label distance=4pt] -90: $1$}] (d1) at (0,0);  
\coordinate [label={[label distance=4pt] -90: $2$}] (d2) at (5,0);
\coordinate [label={[label distance=4pt] -90: $3$}] (d3) at (10,0);
\coordinate [label={[label distance=4pt] -90: $1$}] (d4) at (15,0);

\path[pattern=horizontal lines] 
(c1) -- (d2) -- (c2) -- cycle;
\draw (a1)--(a4);   
\draw (b1)--(b3);
\draw (c1)--(c4);   
\draw (d1)--(d4);
\draw (a1)--(d1);
\draw (a2)--(d2);
\draw (a4)--(d4);
\draw (a2)--(b1)--(d3)--(c4);
\draw (a3)--(b2)--(c3)--(b3)--cycle;
\draw (c3)--(d3);  
\draw (c1)--(d2);

\foreach \p in {a1,a2,a3,a4,d1,d2,d3,d4,c3}  \node [vertex=red] at
(\p) {};
\foreach \p in {b1,b2,c1,b3,c4}  \node [vertex=blue] at (\p) {};
\foreach \p in {c2}  \node [vertex=green] at (\p) {};
\end{tikzpicture}

  \caption{A $(3,2)$-coloring of a triangulated Klein bottle (minus a triangle).}
  \label{fig:KB}
\end{figure}

To pave the way for Theorem~\ref{thm:bound4} below, we give an alternative proof of the K\"undgen and Ramamurthi bound~(\ref{eq:KuendgenRamamurthi}):
We first show that the $2$-chromatic number of the unique minimal $7$-vertex triangulation of the torus (see Figure~\ref{fig:torus}) is three 
--- and is still three when we remove (up to symmetry) one of the $14$ triangles of the triangulation. 
Let us assume that two colors (red and blue) suffice when we remove triangle $1\,3\,4$ (shaded in grey in Figure~\ref{fig:torus}) 
from the triangulation. We pick a triangle, say $1\,2\,4$, and color its three vertices $1$, $2$ and $4$ with two colors
such that the triangle is not monochromatic. Thus, two vertices of the triangle get the same color, say, red,
while the third vertex is colored differently, say, in blue.
\begin{itemize}
\item If $1$ and $2$ are colored red and $4$ is colored blue, it follows that also vertex $6$ is colored blue, since otherwise
we would have a monochromatic triangle $1\,2\,6$. The blue edge $4\,6$ then forces the vertices $3$ and $7$ to be red,
which yields monochromatic triangles $1\,3\,7$ and $2\,3\,7$.
\item If $2$ and $4$ are colored red and $1$ is colored blue, also $5$ has to be blue. Via the blue edge
$1\,5$ it then follows that the vertices $6$ and $7$ are red, which yields monochromatic triangles $2\,6\,7$ and $4\,6\,7$.
\item If $1$ and $4$ are colored red and $2$ is colored blue, the color of vertex $3$ remains open,
since we removed triangle $1\,3\,4$ from the triangulation. We therefore consider two subcases.
First, let us assume that $5$ is blue. Then $3$ has to be red (because of the blue edge $2\,5$),
$6$ has to be blue (because of the red edge $3\,4$), and $7$ has to be red (because of the blue edge $2\,6$),
which yields the monochromatic triangle $1\,3\,7$. If instead we color $5$ red, $7$ has to be blue (because of the red edge $4\,5$),
but then $3$ and $6$ are red (because of the blue edge $2\,7$), which gives monochromatic triangles $1\,5\,6$ and $3\,4\,6$.
\end{itemize}

By taking connected sums with the $7$-vertex triangulation of the torus (minus a triangle), we have $\chi_2(M^2)\geq 3$ 
for any surface $M^2$ with $\chi_E(M^2)<0$.
The only two remaining cases then are the real projective plane  ${\mathbb R}{\bf P}^2$ (with $\chi_E({\mathbb R}{\bf P}^2)=1$ 
and the Klein bottle with $\chi_E(\mbox{\rm Klein bottle})=0$.

The vertex-minimal $6$-vertex triangulation ${\mathbb R}{\bf P}^2_6$ (see Figure~\ref{fig:rp_2_6}) of the real projective plane is edge-transitive.
Thus, we can choose the edge $1\,2$ to be red, which forces the vertices $4$ and $5$ to be blue and (via the blue edge $4\,5$) $3$ to be red.
Via the red edge $1\,3$, $6$ has to be blue, which yields a monochromatic triangle $4\,5\,6$. (If we remove the triangle $4\,5\,6$ from the 
triangulation, we obtain a valid $(2,2)$-coloring of the crosscap.)

The Klein bottle has six distinct vertex-minimal $8$-vertex
triangulations, of which three are $(2,2)$-colorable, while the other
three do not admit a $(2,2)$-coloring; see Figure~\ref{fig:KB} for an
$8$-vertex triangulation with a $(3,2)$-coloring --- we leave it to
the reader to check that there is no $(2,2)$-coloring for this
example.

Combining the above with $\chi_1({\mathbb R}{\bf P}^2)=6$ and $\chi_1(\mbox{\rm Klein bottle})=6$,
we get
 \begin{equation}
\chi_2({\mathbb R}{\bf P}^2)=3,
\end{equation}
\begin{equation}
\chi_2(\mbox{\rm Klein bottle})=3,
\end{equation}
and due to K\"undgen and Ramamurthi~\cite{kundgen_ramamurthi02},
\begin{equation}
\chi_2(T^2)=3. 
\end{equation}

Here,
  the unique $7$-vertex triangulation of the torus is the only
  triangulation of the torus with $1$-chromatic
  number~$7$~\cite{Dirac1952}; it is $2$-colorable with $3$ colors.
  All other triangulations of the torus have $1$-chromatic
  number at most~$6$ and thus also are $2$-colorable with $3$ colors.

\begin{rmk}
The $2$-sphere $S^2$, the projective plane ${\mathbb R}{\bf P}^2$, the $2$-torus $T^2$
and the Klein bottle are the only surfaces for which their $2$-chromatic number are known.
\end{rmk}

As noted by K\"undgen and Ramamurthi~\cite{kundgen_ramamurthi02},
there are neighborly triangulations on $19$ vertices with $\chi_2=4$.
We will sharpen this result in Theorem~\ref{thm:bound4} for non-orientable surfaces and give an extension to connected sums.
For the search for $2$-colorings in the case of small vales of $k$ we use a simple brute force strategy.

\begin{algorithm} (Search for $(k,2)$-colorings)\\[1.5mm]
\label{alg:search}
INPUT: Let $K$ be a triangulation of a surface $M^2$ and fix $k$ to search for $(k,2)$-colorings.
\begin{enumerate}
\item Fix a starting triangle of $K$.
\item Fix an initial coloring of the starting triangle. If $k=2$, we consider the three configurations
         with a monochromatic edge colored with color 1 and the third vertex colored with color 2.
         If  $k\geq 3$, we also consider the case that the three vertices of the triangle are colored 
         differently by the three colors $1$, $2$, and~$3$. We write the initial configurations to a stack.

         At any stage, we have a subset of colored vertices and the remaining set of unmarked vertices.
         The unmarked vertices will be split into two sets, the set of \emph{reachable} vertices and the set of \emph{unreachable} vertices.
         An unmarked vertex is reachable if it forms a triangle (in the triangulation $K$) together with two colored vertices.
         A reachable vertex sometimes can be reached via different triangles, which could impose restrictions on the choices
         to color this vertex to avoid monochromatic triangles.  We compute the list of admissible colors for each reachable vertex.
          If a reachable vertex has an empty list of admissible colors,
         we discard this configuration. 

\item We remove a configuration from the stack and choose a reachable vertex for which the cardinality of its list
         of admissible colors is as small as possible. For any choice to color the new vertex, we write the resulting
         configuration (if admissible) back to the stack.         

At any intermediate stage there always are reachable vertices: let $L$ be the subcomplex of the triangulation~$K$ consisting of all triangles 
for which all of their vertices have been colored. Then for any edge on the boundary of $L$ one of its
two incident triangles is in $L$ while the other is not. The latter triangle contains an uncolored, but reachable vertex.

\end{enumerate}
The search terminates either with the conclusion that $K$ does not have any $(k,2)$-colorings 
or outputs a $(k,2)$-coloring. 
\end{algorithm}

We use that $\chi_E=2-2g$ for an orientable surface of genus $g$
and $\chi_E=2-u$ for a non-orientable surface of genus $u$.

\begin{thm}\label{thm:bound4}
Let $M^2$ be an orientable surface of genus $g\geq 20$ (i.e., with $\chi_E(M^2)\leq -38$) or $M^2$ be a non-orientable surface
of genus $u\geq 26$ (i.e., with $\chi_E(M^2)\leq -24$), then
\begin{equation}
\chi_2(M^2)\geq 4.
\end{equation}
\end{thm}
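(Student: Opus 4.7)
The plan is to mimic the connected-sum reduction used in the previous proof for $\chi_2\geq 3$, with the role of the $7$-vertex torus played by two small base triangulations that already force $\chi_2\geq 4$ after removing any single triangle. The base cases themselves will be handled by exhaustive application of the brute-force $(k,2)$-coloring algorithm above.

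The first step is to identify the base triangulations. For a neighborly $n$-vertex triangulation of a surface, $f_1=\binom{n}{2}$ together with $3f_2=2f_1$ and $\chi_E=f_0-f_1+f_2$ give $\chi_E=n-\frac{1}{6}n(n-1)$. Setting $n=19$ yields $\chi_E=-38$, the orientable surface of genus $20$; setting $n=16$ yields $\chi_E=-24$, the non-orientable surface of genus $26$. These are exactly the two thresholds in the statement. The plan is therefore to take a neighborly $19$-vertex triangulation $K_o$ of the orientable genus-$20$ surface (for instance the one noted by K\"undgen and Ramamurthi to satisfy $\chi_2=4$) and a neighborly $16$-vertex triangulation $K_n$ of the non-orientable genus-$26$ surface, and to verify by the algorithm above that for each $K\in\{K_o,K_n\}$ and every $2$-face $\Delta$ of $K$ the complex $K\setminus\Delta$ admits no $(3,2)$-coloring.

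With such base triangulations in hand, the theorem will follow by a connected-sum argument. For orientable genus $g\geq 20$ I would write $\Sigma_g=\Sigma_{20}\#\Sigma_{g-20}$, triangulate the second summand arbitrarily, remove a triangle $\Delta$ from $K_o$ and a triangle from the second summand, and identify the two resulting boundary triangles. The triangulation of $\Sigma_g$ so obtained contains $K_o\setminus\Delta$ as a subcomplex, so any $(3,2)$-coloring would restrict to a $(3,2)$-coloring of $K_o\setminus\Delta$, contradicting the base case; hence $\chi_2(\Sigma_g)\geq 4$. For non-orientable genus $u\geq 26$ I would write $N_u=N_{26}\#S$ with $S$ a surface of Euler characteristic $28-u$ (orientable or non-orientable as parity allows; $S=S^2$ when $u=26$), and run exactly the same argument using $K_n$.

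The hard part will be the base-case verification: the $(3,2)$-coloring search trees on $19$ and $16$ vertices, even after one triangle is removed, are very large. Success depends on the branching heuristic above (always extend at a reachable vertex whose admissible-colors list is shortest) and on exploiting the automorphism group of each base triangulation to cut the set of $2$-faces $\Delta$ that need to be checked down to a small number of orbit representatives. A secondary issue is to pick $K_o$ and $K_n$ so that the ``every triangle removable'' property actually holds; if a given candidate fails, one iterates through other neighborly triangulations from the known catalogues until a working pair is found.
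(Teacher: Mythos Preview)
Your approach is essentially the same as the paper's: find neighborly base triangulations on $19$ and $16$ vertices (the paper uses the specific cyclic examples \texttt{manifold\_cyc\_d2\_n19\_\#47} and \texttt{manifold\_cyc\_d2\_n16\_\#10} from the catalogue~\cite{Lutz_PAGE}), verify by exhaustive search that no $(3,2)$-coloring exists even after a triangle is removed, and then propagate via connected sums.

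One simplification you should make: you do \emph{not} need the ``every triangle removable'' property. In the connected sum $K\#L$ you are free to choose which triangle $\Delta$ of $K$ is removed, so it suffices to exhibit a \emph{single} $\Delta$ for which $K\setminus\Delta$ has no $(3,2)$-coloring. This is exactly what the paper does (it names the triangles $[6,8,15]$ and $[1,12,14]$), and it cuts the base-case computation to one search per base triangulation rather than one per orbit of triangles. Your worry about iterating through catalogues until the stronger all-$\Delta$ property holds is therefore unnecessary.
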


\begin{proof}
We run an implementation of Algorithm~\ref{alg:search} on various triangulations of surfaces from~\cite{Lutz_PAGE} to find examples that do not admit $(3,2)$-colorings.
For each such example, we then rerun the search on the example minus one of its triangles:

The example \texttt{manifold\_cyc\_d2\_n16\_\#10} from~\cite{Lutz_PAGE} is a neighborly cyclic triangulation of the non-orientable surface of genus $u=26$ on $n=16$ vertices with $\chi_E=-24$, which is $(4,2)$-colorable, but not $(3,2)$-colorable. If we remove the triangle $[1,12,14]$ from the triangulation,
there still is no $(3,2)$-coloring. By taking connected sums, we obtain the stated result for non-orientable surfaces
with $\chi_E\leq -24$.

The example \texttt{manifold\_cyc\_d2\_n19\_\#47} from~\cite{Lutz_PAGE} is a neighborly cyclic triangulation of the orientable surface of genus $g=20$ on $n=19$ vertices with $\chi_E=-38$, which is $(4,2)$-colorable, but not $(3,2)$-colorable. If we remove the triangle $[6,8,15]$ from the triangulation,
there still is no $(3,2)$-coloring. By taking connected sums, the stated result follows for orientable surfaces
with $\chi_E\leq -38$.
\end{proof}

\begin{question}
  Is there an orientable surface $M$ of genus $g<20$ or a
  non-orientable surface $M$ of genus $u<26$ with $\chi_2(M^2)\geq 4$?
\end{question}

We tried bistellar flips  \cite{BjoernerLutz2000} to search through the spaces of triangulations
for  various fixed genus surfaces with $g<20$ or $u<26$, but never found an example with  $\chi_2\geq 4$.
For $g\geq 20$ or $u\geq 26$ we also failed to find an example with $\chi_2> 4$ this way. 
However, as we will see next, there are triangulations of orientable and non-orientable surfaces with arbitrary large $2$-chromatic number.

\begin{defn}\label{defn:STS}
  A pure $2$-dimensional simplicial complex $\mathrm{STS}(n)$ with vertex set $V=\{1,\dots,n\}$ is a
  \emph{Steiner triple system} on $V$ if the $1$-skeleton of $\mathrm{STS}(n)$ is the complete
  graph on $V$ and every $1$-simplex lies in a unique $2$-simplex of $\mathrm{STS}(n)$.
\end{defn}

For every $k\geq 3$ there is a Steiner triple system with $\chi_2\geq k$ \cite{deBrandesPhelpsRoedl1982}.
However, the proof of existence in \cite{deBrandesPhelpsRoedl1982} is non-constructive, 
and explicit examples are not known for $k\geq 7$.
Steiner triple systems $\mathrm{STS}(n)$ with $n\geq 7$ have $\chi_2\geq 3$ \cite{rosa70},  
Steiner triple systems with $\chi_2= 4$ can be found, for example, in \cite{deBrandesPhelpsRoedl1982,rosa70}.

For projective Steiner triples systems $\mathrm{PG}(2^d)$ (see Section~\ref{sec:PGd2}), 
$\chs 2{\mathrm{PG}(2^d)} \leq \chs 2{\mathrm{PG}(2^{d+1})} \leq \chs 2{\mathrm{PG}(2^d)}+1$ \cite{rosa70},
where  $\chs 2{\mathrm{PG}(2^5)} =4$ \cite{rosa70} and $\chs 2{\mathrm{PG}(2^6)} =5$ \cite{fugere94}.
By \cite[Corollary 2]{haddad99}, $\chs 2{\mathrm{PG}(2^d)} \to \infty \text{\ for\ } d \to \infty$.

Grannell, Griggs, and \v{S}ir\'a\v{n}~\cite{GrannellGriggsSiran2005} proved that for $n>3$ 
every $\mathrm{STS}(n)$ has both an orientable and a non\-orientable surface embedding in which the triples of the $\mathrm{STS}(n)$ 
appear as triangular faces and there is just one additional large face.

We combine the existence of Steiner triple systems with high $\chi_2$ with the embedding theorem \cite{GrannellGriggsSiran2005}.

\begin{thm} 
$\chi_2(\mbox{arbitrary surface})=\infty$.
\end{thm}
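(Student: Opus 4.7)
The plan is to combine the existence of Steiner triple systems with arbitrarily large $2$-chromatic number, supplied by \cite{deBrandesPhelpsRoedl1982}, with the Grannell--Griggs--\v{S}ir\'a\v{n} embedding theorem \cite{GrannellGriggsSiran2005} recalled just before the statement. Fix any integer $k\geq 3$. First I would pick a Steiner triple system $S=\mathrm{STS}(n)$ with $\chi_2(S)\geq k$; then I would embed $S$ in a closed (orientable or non-orientable) surface $M$ so that every triple of $S$ occurs as a triangular face of the embedding and there is exactly one further $2$-cell $F$.

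The embedded object is not yet a simplicial triangulation, because of the single non-triangular face $F$. To fix this I would subdivide only $F$, introducing new vertices that all lie outside $V(S)$: if the boundary walk of $F$ traverses some vertex or edge of $S$ more than once (which is typical for the large face of a triangular $\mathrm{STS}$-embedding), first insert auxiliary midpoints on the boundary so that the closure of $F$ becomes a disk bounded by a simple polygon, and then place a fresh vertex $v$ in the interior of $F$ and cone $v$ over that polygon. The result is a genuine simplicial triangulation $K$ of $M$ whose triangle set still contains every triple of $S$, while all the additional triangles involve at least one auxiliary vertex.

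The coloring argument is then immediate. Suppose, for contradiction, that $K$ admits a $(k-1,2)$-coloring $c$, i.e., an assignment of at most $k-1$ colors to the vertices of $K$ with no monochromatic triangle. Since every triple of $S$ is a triangle of $K$, no such triple is monochromatic under $c$, so the restriction $c|_{V(S)}$ is a $(k-1,2)$-coloring of $S$, contradicting $\chi_2(S)\geq k$. Hence $\chi_2(M)\geq\chi_2(K)\geq k$, and letting $k$ grow shows that the $2$-chromatic number is unbounded over the class of triangulable surfaces, as required.

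The only step requiring any care is the simplicial subdivision of $F$, which is routine PL surgery; it does not interact with the restriction argument because no auxiliary vertex lies in $V(S)$. All the real content is carried by the two cited ingredients: high-$\chi_2$ Steiner triple systems on the combinatorial side, and the GGS triangular embedding on the topological side.
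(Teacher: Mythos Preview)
Your argument follows the same route as the paper: take an $\mathrm{STS}(n)$ with large $\chi_2$ from \cite{deBrandesPhelpsRoedl1982}, embed it via Grannell--Griggs--\v{S}ir\'a\v{n} so that the triples are the triangular faces and there is one residual $2$-cell $F$, subdivide $F$ simplicially using only fresh vertices, and then restrict any putative coloring to $V(S)$. The restriction step and the two cited ingredients are exactly what the paper uses.

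The one place where your sketch is loose is the actual subdivision of $F$. Inserting midpoints on the boundary edges does \emph{not} make the boundary polygon simple: the Eulerian boundary walk of $F$ traverses each edge of $K_n$ once but each original vertex $(n-1)/2$ times, and those repetitions survive any edge subdivision. Coning a single interior vertex $v$ over such a walk would therefore create repeated edges $\{v,x\}$ and repeated triangles, so the result is not a simplicial complex. The paper resolves this by placing an inner cycle of $\binom{n}{2}$ new vertices inside $F$, triangulating the annulus between the two cycles in a zigzag pattern (which is simplicial because any three \emph{consecutive} vertices on the outer walk are distinct), and only then triangulating the now-simple inner disk. Your ``routine PL surgery'' claim is certainly true in principle, but the concrete recipe you describe needs to be replaced by something along these lines.
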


\begin{proof}
For any given $\mathrm{STS}(n)$ on $n$ vertices, Grannell, Griggs, and \v{S}ir\'a\v{n}~\cite{GrannellGriggsSiran2005} 
construct \emph{maximum genus embeddings}  (orientable and non-orientable) that decompose the target surface
into the triangles of the $\mathrm{STS}(n)$ plus one additional polygonal cell that has as its boundary the $\binom{n}{2}$ edges
(of the complete graph $K_n$ as the $1$-skeleton) of the $\mathrm{STS}(n)$. In particular, each vertex of the $\mathrm{STS}(n)$
appears $\binom{n}{2}/n$ times on the boundary of the polygonal cell, but any three consecutive vertices on the boundary differ (as there
are no loops and multiple edges).

We extend the embeddings of \cite{GrannellGriggsSiran2005} to proper triangulations by placing a cycle with $\binom{n}{2}$ additional vertices
in the interior of the polygonal cell and triangulate the annulus between the inner cycle and the boundary cycle in a zigzag way, which gives 
$2\binom{n}{2}$ triangles in the annulus. A triangulation of the inner disk by placing chords requires  $\binom{n}{2}-2$ additional triangles.
 If $\chi_2(\mathrm{STS}(n))=k$, then the resulting triangulated surfaces (orientable and non-orientable)  have 
face vector 
\begin{equation}\label{eq:facevector}
f=\big(n+\textstyle\binom{n}{2},\,5\binom{n}{2}-3,\,\frac{1}{3}\binom{n}{2}+3\binom{n}{2}-2\displaystyle\big),
\end{equation} 
$\chi_E=2-\frac{1}{3}(n-1)(n-3)$ and $\chi_2\geq k$.
\end{proof}

The genus of the resulting surface is $\frac{1}{6}(n-1)(n-3)$ in the orientable and $\frac{1}{3}(n-1)(n-3)$ in the non-orientable case.
The face vector~(\ref{eq:facevector}) of the triangulations can further be improved by not placing an inner cycle vertex for every edge
on the identified boundary of the polygonal cell, but instead have an inner cycle vertex for a sequence of consecutive boundary edges 
that contain no repeated original vertices. Yet, for the simplicity of the construction, we place all $\binom{n}{2}$ new vertices.

\begin{thm}
Every Steiner triple system $\mathrm{STS}(n)$ with $2$-chromatic number $k\geq 3$ has embeddings into triangulations
of the orientable surface of genus $g=\frac{1}{6}(n-1)(n-3)$ and the non-orientable surface of genus $u=\frac{1}{3}(n-1)(n-3)$ 
with \begin{equation}\label{eq:facevector_new}
f=\big(n+\textstyle\binom{n}{2}+1,\,5\binom{n}{2},\,\frac{1}{3}\binom{n}{2}+3\binom{n}{2}\displaystyle\big)
\end{equation} 
$\chi_E=2-\frac{1}{3}(n-1)(n-3)$ and $\chi_2=k$.
\end{thm}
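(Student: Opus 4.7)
The plan is to modify the construction from the previous theorem by introducing a single new apex vertex $w$ in the interior of the polygonal $2$-cell and coning the inner $m$-gon ($m = \binom{n}{2}$) over $w$, rather than triangulating it with chords. This replaces the $m-3$ chords and $m-2$ chord triangles by $m$ new spokes and $m$ cone triangles at the cost of one extra vertex; a direct count then yields the claimed face vector
\[
f = \big(n + \tbinom{n}{2} + 1,\ 5\tbinom{n}{2},\ \tfrac{1}{3}\tbinom{n}{2} + 3\tbinom{n}{2}\big).
\]
Since the Euler characteristic is unchanged from the chord version, the ambient surface (orientable or non-orientable) and its genus are the same as in the previous theorem.

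The lower bound $\chi_2 \geq k$ is inherited immediately from the previous theorem: any $(k-1,2)$-coloring of the whole triangulation restricts to a $(k-1,2)$-coloring of the embedded $\mathrm{STS}(n)$-subcomplex, contradicting $\chi_2(\mathrm{STS}(n)) = k$. For the upper bound $\chi_2 \leq k$, I would fix a $(k,2)$-coloring $c\colon U \to \{1,\dots,k\}$ of $\mathrm{STS}(n)$, assign $c(w)$ an arbitrary color, and then greedily color $v_1, v_2, \dots, v_m$ in cyclic order. The key observation is that for $1 < i < m$ the only triangles at $v_i$ whose other two vertices are already colored are the two annulus triangles $\{p_i, v_{i-1}, v_i\}$ and $\{p_i, p_{i+1}, v_i\}$, each possibly forbidding the value $c(p_i)$, together with the cone triangle $\{v_{i-1}, v_i, w\}$, possibly forbidding $c(w)$. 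Hence the forbidden set for $v_i$ is contained in the $2$-element set $\{c(p_i), c(w)\}$, leaving at least $k - 2 \geq 1$ admissible colors.

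The main obstacle is the cyclic-closure step $i = m$, where the additional triangles $\{p_1, v_m, v_1\}$ and $\{v_m, v_1, w\}$ are also active and could a priori contribute a third forbidden color $c(p_1)$. I would resolve this with a short case check: if $c(p_m)$, $c(p_1)$, and $c(w)$ were pairwise distinct and all three simultaneously forbidden for $v_m$, then the forcing equalities would impose $c(v_{m-1}) = c(p_m)$, $c(v_1) = c(p_1)$, together with $c(w) \in \{c(v_{m-1}), c(v_1)\}$, forcing either $c(w) = c(p_m)$ or $c(w) = c(p_1)$ and contradicting the assumed distinctness. Thus at most two colors are forbidden for $v_m$ as well, the greedy procedure succeeds, and combining the two bounds yields $\chi_2 = k$.
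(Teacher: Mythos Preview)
Your proof is correct. The construction, face-vector count, and lower bound $\chi_2\geq k$ match the paper exactly. For the upper bound $\chi_2\leq k$, however, you take a different route: you color the inner $m$-cycle greedily and then handle the cyclic closure at $i=m$ by a short case analysis showing that the three potential obstructions $c(p_m),c(p_1),c(w)$ cannot all be simultaneously forbidden and pairwise distinct. This argument is sound. The paper instead observes that one can color the entire inner cycle using only the two colors $1$ and $2$ (switching between them whenever the current color would clash with the adjacent outer vertex), and then assigns the apex $w$ the third color~$3$; since the cone triangles always meet color~$3$ at $w$, they are automatically non-monochromatic, and no separate closure analysis is needed. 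The paper's scheme is a bit more explicit and avoids your endgame case check, while your greedy argument is perhaps more robust in spirit (it does not rely on the specific zigzag structure beyond the local degree bound). Either way the conclusion is the same.
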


\begin{proof}
Instead of adding chords to triangulate the inner disc, we place a central vertex in the interior of the inner cycle and add the cone with respect to the cycle.
This yields  triangulations that have one vertex, three edges and two triangles extra and thus have
 $f=\big(n+\textstyle\binom{n}{2}+1,\,5\binom{n}{2},\,\frac{1}{3}\binom{n}{2}+3\binom{n}{2}\displaystyle\big)$.

Let the $n$ vertices (with repetitions) on the outer cycle be colored with $k$ colors such that the Steiner triple system $\mathrm{STS}(n)$ 
has no monochromatic triangle. Then only two of the colors $1,\dots,k$ are needed to color the inner cycle:
For this, we go along the inner cycle of the zigzag collar and color the vertices with color~1 until a neighboring vertex on the outer cycle has color~1 as well.
In this case, we switch to color~2 until an outer vertex is colored with color~2, then switch back to color~1, etc.
For the cone vertex we are then free to use color~3.
\end{proof}

The following triangulation scheme provides an explicit algorithmic version of the embedding procedure of \cite{GrannellGriggsSiran2005}
along with the completion to a triangulation.

\begin{algorithm}\label{alg:embedding}
(Triangulation of an orientable maximum genus embedding of a Steiner triple system)\\[1.5mm]
INPUT: Let $\mathrm{STS}(n)$ be a Steiner triple system on $n$ vertices $1,\dots,n$.
\begin{enumerate}
\item Start with an embedding of the star of the vertex $1$ in $\mathrm{STS}(n)$ into the $2$-sphere $S^2$. 
         For this, there are  $\frac{1}{2}(\frac{n-1}{2}!)2^{\frac{n-1}{2}}$ choices. 
\item Read off the boundary cycle of the embedded star. 
         Let the triangles of the star be `black' and the outside region  (a disk with identifications on the boundary) be `white'.
\item Proceed iteratively for the remaining triangles of the $\mathrm{STS}(n)$ (in any order). For each picked triangle $uvw$,
         add a handle to the white area. Choose occurrences of the vertices $u$, $v$, $w$ on the boundary cycle and glue in the triangle $uvw$
         by routing it via the newly added handle in a twisted way (see Figure~\ref{fig:embed_sts7}), such that after the addition of $uvw$, the white outside area is a disk again.
         For the chosen $u$, $v$, $w$, let $uAvBwC$ be the boundary cycle  before the addition of $uvw$.
         By the addition of the handle and $uvw$, the boundary cycle is then expanded to $uAvwCuvBw$, by inserting the edges $vw$, $uv$, and $wu$ 
         and by traversing $C$ before~$B$.
\item Once all remaining triangles of $\mathrm{STS}(n)$ are added, the boundary cycle contains $\binom{n}{2}$ vertices (multiple copies) 
         and all $\binom{n}{2}$  edges 
         of the complete graph $K_n$ (i.e., the boundary cycle is an Eulerian cycle through $K_n$).
\item Place a second cycle with $\binom{n}{2}$ new vertices in the interior of the white disk and triangulate the annulus between the two cycles in a zigzag way.
\item Finally, triangulate the inner polygon, e.g., by adding the cone of an additional vertex as apex with respect to all edges on the inner polygon. 
\end{enumerate}
\end{algorithm}

\begin{figure}[t]
\begin{center}
\includegraphics[width=6cm]{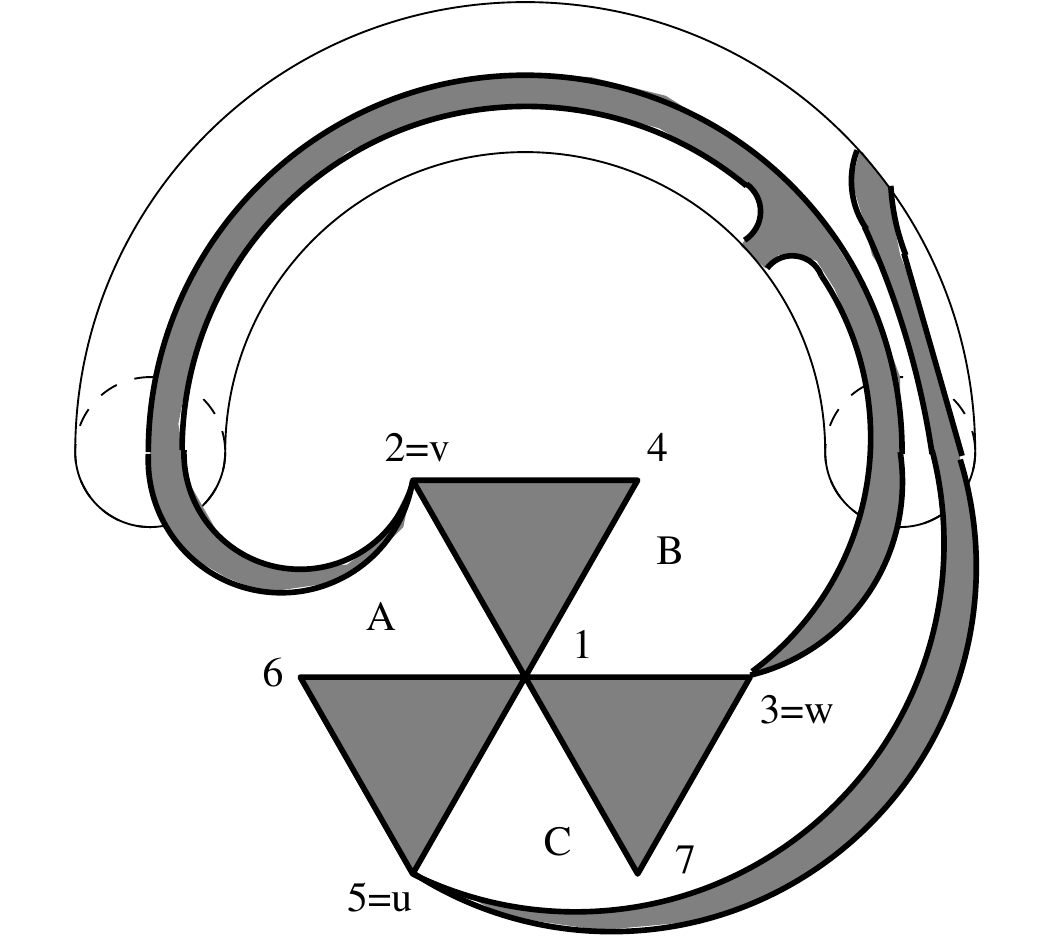}
\end{center}
\caption{The addition of a triangle to the embedded vertex star of $\mathrm{STS}(7)$.}
\label{fig:embed_sts7}
\end{figure}

For non-orientable embeddings, Step (3) of Algorithm~\ref{alg:embedding} can simply be modified 
by not adding a handle for the final triangle, but a twisted handle (two crosscaps) instead \cite{GrannellGriggsSiran2005}.
The cycle $uAvBwC$ is then expanded to $uAvw\overline{B}vu\overline{C}w$, where $\overline{B}$ and $\overline{C}$
denote that $B$ and $C$ are traversed in the opposite direction, respectively.

\begin{example}
The seven upper triangles $124$, $137$, $156$, $235$, $267$, $346$, $457$
of Figure~\ref{fig:torus} form (up to isomophism) the unique Steiner triple system $\mathrm{STS}(7)$ on $7$ vertices.
Once an embedding for the star of vertex $1$, consisting of the triangles $124$, $137$, $156$, is chosen, 
four handles are added along with the remaining four triangles,  and the boundary cycle is expanded each time. Initially, we have $124137156$ as boundary cycle,
which is expanded as follows:
\begin{equation*}
\begin{array}{llll}
235: &  1\fbox{2}41\fbox{3}71\fbox{5}6                      & \longrightarrow &  1\fbox{2}\fbox{3}71\fbox{5}\fbox{2}41\fbox{3}\fbox{5}6, \\[2mm]
267: &  1\fbox{2}3\fbox{7}1524135\fbox{6}                & \longrightarrow &  1\fbox{2}\fbox{7}1524135\fbox{6}\fbox{2}3\fbox{7}\fbox{6}, \\[2mm]
346: &  127152\fbox{4}1\fbox{3}5\fbox{6}2376          & \longrightarrow &  127152\fbox{4}\fbox{3}5\fbox{6}\fbox{4}1\fbox{3}\fbox{6}2376, \\[2mm]
457: &  12\fbox{7}1\fbox{5}2\fbox{4}35641362376    & \longrightarrow &  12\fbox{7}\fbox{5}2\fbox{4}\fbox{7}1\fbox{5}\fbox{4}35641362376.
\end{array}
\end{equation*}
The interior of the resulting cycle 127524715435641362376 is triangulated and these `white' triangles together with the `black' triangles
of the Steiner triple system give a triangulation of the orientable genus $4$ surface that contains the Steiner triple system.
\end{example}

The smallest known Steiner triple system with $\chi_2=5$ is $\mathrm{PG}(64)$ on $63$ vertices,
yielding triangulated surfaces (orientable and non-orientable) with $f=(2017,9765,6510)$ that have $\chi_2=5$.

\begin{thm} 
The orientable surface of genus $g=620$ and the non-orientable surface of genus $u=1240$ have 
triangulations with $f=(2017,9765,6510)$ and $\chi_2=5$.
\end{thm}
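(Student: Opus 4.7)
The plan is to obtain this as an immediate corollary of the previous theorem by plugging in the specific Steiner triple system $\mathrm{PG}(64)$. The ingredients are already in place: by \cite{fugere94} (cited earlier), the projective Steiner triple system $\mathrm{PG}(2^6) = \mathrm{PG}(64)$ has $n=63$ vertices and $2$-chromatic number $\chs 2{\mathrm{PG}(64)} = 5$. The previous theorem then hands us, for this $\mathrm{STS}(63)$, embeddings into triangulations of both an orientable surface of genus $g = \tfrac{1}{6}(n-1)(n-3)$ and a non-orientable surface of genus $u = \tfrac{1}{3}(n-1)(n-3)$, with $2$-chromatic number equal to $k = 5$.

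The remaining content is therefore purely arithmetical: substitute $n=63$ into the formulas and verify that the numbers match the ones claimed. Compute $\binom{63}{2} = 1953$, so the face vector becomes
\begin{equation*}
f = \bigl(63 + 1953 + 1,\; 5\cdot 1953,\; \tfrac{1}{3}\cdot 1953 + 3\cdot 1953\bigr) = (2017,\;9765,\;651 + 5859) = (2017,\;9765,\;6510),
\end{equation*}
matching the statement. The orientable genus is $\tfrac{1}{6}\cdot 62 \cdot 60 = 620$, and the non-orientable genus is $\tfrac{1}{3}\cdot 62 \cdot 60 = 1240$, again as claimed.

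There is essentially no obstacle here beyond cross-checking these computations; the real mathematical content was already done in the earlier existence/embedding theorem and in the cited determination $\chs 2{\mathrm{PG}(64)} = 5$. One only needs to note that the inequality $\chi_2 \geq 5$ for the triangulation is automatic since $\mathrm{PG}(64)$ embeds as a subcomplex, while the reverse inequality $\chi_2 \leq 5$ is the content of the coloring extension argument in the previous theorem (coloring the inner cycle with just two of the existing colors and the apex with a third). Thus the proof reduces to the single sentence: apply the previous theorem to $\mathrm{PG}(64)$.
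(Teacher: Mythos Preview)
Your proposal is correct and matches the paper's approach exactly: the theorem is stated in the paper without a separate proof, immediately after the sentence noting that $\mathrm{PG}(64)$ on $63$ vertices has $\chi_2=5$, and is intended as a direct instantiation of the preceding embedding theorem (with the cone-vertex face vector~\eqref{eq:facevector_new}). Your arithmetic checks are precisely the verification the paper leaves implicit.
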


The smallest known Steiner triple system with $\chi_2=6$ is $\mathrm{AG}(5,3)$ on $3^5=243$ vertices \cite{bruen98}.

\begin{thm} 
The orientable surface of genus $g=9680$ and the non-orientable surface of genus $u=19360$ have 
triangulations with $f=(29647,147015,98010)$ and $\chi_2=6$.
\end{thm}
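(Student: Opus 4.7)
The plan is to invoke the preceding embedding theorem directly with the Steiner triple system $\mathrm{AG}(5,3)$, which the excerpt has cited (via \cite{bruen98}) as an example with $n = 243$ vertices and $\chi_2 = 6$. The previous theorem asserts that any $\mathrm{STS}(n)$ with $\chi_2 = k \geq 3$ embeds into triangulations (orientable of genus $\frac{1}{6}(n-1)(n-3)$ and non-orientable of genus $\frac{1}{3}(n-1)(n-3)$) with face vector given by (\ref{eq:facevector_new}) and $2$-chromatic number still equal to $k$. So the task reduces to a computation.

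First I would set $n = 243$ and compute $\binom{n}{2} = \binom{243}{2} = 29403$. Then the face vector from (\ref{eq:facevector_new}) becomes
\begin{equation*}
f = \bigl(243 + 29403 + 1,\ 5 \cdot 29403,\ \tfrac{1}{3} \cdot 29403 + 3 \cdot 29403\bigr) = (29647,\ 147015,\ 98010),
\end{equation*}
matching the claim. For the genus I would compute $(n-1)(n-3) = 242 \cdot 240 = 58080$, giving orientable genus $g = 58080/6 = 9680$ and non-orientable genus $u = 58080/3 = 19360$.

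Finally, since $\chi_2(\mathrm{AG}(5,3)) = 6$ by \cite{bruen98}, the previous theorem guarantees that both resulting triangulations satisfy $\chi_2 = 6$: the embedding preserves the Steiner triple system as a subcomplex (so $\chi_2 \geq 6$ is inherited from the obstruction to $(5,2)$-coloring $\mathrm{AG}(5,3)$), and the coloring extension argument in the previous proof (use two of the existing colors alternately along the inner collar cycle, then a third color on the cone apex) shows that any $6$-coloring of $\mathrm{AG}(5,3)$ avoiding monochromatic triples extends to a $(6,2)$-coloring of the whole triangulation, giving $\chi_2 \leq 6$. There is no real obstacle here — the theorem is a direct corollary of the previous theorem and a bookkeeping exercise, and the only nontrivial input is the external fact $\chi_2(\mathrm{AG}(5,3)) = 6$ from \cite{bruen98}, which is taken as given.
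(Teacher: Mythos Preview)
Your proposal is correct and matches the paper's approach exactly: the theorem is stated in the paper without a separate proof, as an immediate application of the preceding embedding theorem to the Steiner triple system $\mathrm{AG}(5,3)$ on $n=243$ vertices with $\chi_2=6$ from \cite{bruen98}. Your arithmetic verification of the face vector and genera is precisely the bookkeeping that the paper leaves implicit.
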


For the orientable cases, list of facets \texttt{PG64\_n2017\_o1\_g620} and \texttt{AG\_5\_3\_n29647\_o1\_g9680}
can be found online at~\cite{BenedettiLutz_LIBRARY}.

As mentioned above, \emph{maximum genus embeddings} have genus $\frac{1}{6}(n-1)(n-3)$ in the orientable 
and genus $\frac{1}{3}(n-1)(n-3)$ in the non-orientable case.

\emph{Minimum genus embeddings} of Steiner triple systems are obtained by complementing
a given Steiner triple system $\mathrm{STS}(n)$ with a transversal and isomorphic Steiner triple system $\mathrm{STS}(n)^T$
on the same number of vertices so that the union $\mathrm{STS}(n)\cup \mathrm{STS}(n)^T$ triangulates a surface
with face vector
\begin{equation}
f=\big(n,\textstyle\binom{n}{2},\frac{2}{3}\binom{n}{2}\displaystyle\big)
\end{equation}
and $\chi_E=2-\frac{1}{6}(n-3)(n-4)$. These bi-embeddings give orientable genus $\frac{1}{12}(n-3)(n-4)$ surfaces
or non-orientable genus $\frac{1}{6}(n-3)(n-4)$ surfaces; cf.\  \cite{grannell1998,rifa2014,soloveva2007}.
However, transversals need not exist in general.

The following section provides background and results on Steiner triple systems and transversals.
In Section~\ref{sec:bose}, we show that the examples in the infinite series of neighborly Bose Steiner 
surfaces all have $\chi_2=3$. In Section~\ref{sec:PGd2}, we will see that the projective Steiner triple 
system $\mathrm{PG}(128)$ with $\chi_2(\mathrm{PG}(128)) \in \{5,6\}$ admits a non-orientable transversal $\mathrm{PG}(128)^T$, 
thus yielding a non-orientable surface $\mathrm{PG}(128)\cup\mathrm{PG}(128)^T$
of genus $2542$ with $f=(127,8001,5334)$ and $2$-chromatic number $\chi_2(\mathrm{PG}(128)\cup\mathrm{PG}(128)^T)\in \{5,6\}$.

\section{Steiner triple systems and chromatic numbers of Steiner surfaces}
\label{sec:steiner}

This section contains a systematic study of Steiner triple systems and \emph{Steiner surfaces}, 
in particular, with the aim to clarify some inconsistencies in the literature (see the end of Section~\ref{sec:PGd2} for further comments).
We say that two Steiner triple systems on a set
$V$ are \emph{transversal} (\emph{orientable transversal}) if their union is an
(orientable) triangulated surface with vertex set $V$. We develop
criteria for transversality and orientability and show that
transversals to the Steiner triple system $\mu$ are indexed by the
double coset $\Sigma(\mu) \backslash \Sigma(V) / \Sigma(\mu)$ where
$\Sigma(\mu)$ is the automorphism group of~$\mu$ and $\Sigma(V)$ the
automorphism group of the set $V$. 
In Section~\ref{sec:PGd2} we use this characterization to carry out systematic searches 
for orientable) transversals to Steiner triple systems. 

Let $V$ be a finite set with an odd number $n = 2m+1$ of elements. Let $\mathrm{STS}(V)$ be the set of all Steiner triple systems on $V$. 

\begin{defn}\label{defn:STSgroup}
  A \emph{Steiner quasigroup} on $V$ is a binary operation $\mu \colon V
  \times V \to V \colon (x,y) \to x \cdot y$ such that
  \begin{itemize}
  \item $x \cdot x =x$,  
  \item $x \cdot y=y \cdot x$,
  \item $x\cdot (x \cdot y)=y$
  \end{itemize}
for all $x,y \in V$.
\end{defn}

We shall use the following notation:
\begin{itemize}

\item $\Sigma(V)$ is the group of all permutations of $V$.

\item $S(\mu) = \{
\{x,y,\mu(x,y) \} \mid x,y \in V,\; x \neq y \}$ is the Steiner triple
system associated to the Steiner quasigroup~$\mu$. (We shall often not distinguish between 
a Steiner quasigroup~$\mu$ on $V$ and the associated Steiner triple system $S(\mu)$ on $V$.)

\item If $\mu \in \mathrm{STS}(V)$ and $T \in \Sigma(V)$ then $\mu^T
  \in \mathrm{STS}(V)$ is the Steiner quasigroup given by
  $\mu^T(x,y)=\mu(x^{T^{-1}},x^{T^{-1}})^T$ for all $x,y \in V$.

\item $\Sigma(\mu)=\{ a \in \Sigma(V) \mid 
\forall x,y \in V \colon \mu(x^a,y^a) = \mu(x,y)^a \}$ is the
symmetry group of the Steiner quasigroup $\mu$.

\end{itemize}

There is a right action of the symmetry group of $V$ on the set of
Steiner triple systems on $V$,
\begin{equation}
  \label{eq:STSxSigmaV}
 \mathrm{STS}(V) \times \Sigma(V) \to \mathrm{STS}(V),
\end{equation}
taking $(\mu,T)$ to $\mu^T$. The isotropy subgroup at $\mu$ is
$\Sigma(\mu)$: $T \in \Sigma(\mu) \iff \mu^T=\mu$. We have
$\mu^T(x^T,y^T) = \mu(x,y)^T$ and $S(\mu)^T = S(\mu^T)$ for all $\mu
\in \mathrm{STS}(V)$ and all $T \in \Sigma(V)$.

We note the following well-known result from design theory.  

\begin{thm}
   $\text{$\mathrm{STS}(V) \neq \emptyset$} \iff 
   \text{$n \equiv 1 \bmod 6$ or $n \equiv 3 \bmod 6$}$.
\end{thm}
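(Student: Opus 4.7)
The plan is to prove the two directions separately, starting with the easy direction. For necessity, I would use a double-counting argument. Fix $v \in V$: every pair $\{v,w\}$ with $w \neq v$ lies in a unique triple, which determines one other element, so the triples through $v$ partition $V \setminus \{v\}$ into pairs; hence $n-1$ is even and $n$ is odd. Globally, each of the $\binom{n}{2}$ pairs is covered by exactly one triple, and each triple covers three pairs, so the total number of triples equals $n(n-1)/6$, which must be an integer. Combining $n$ odd with $3 \mid n(n-1)$ leaves exactly the residues $n \equiv 1$ and $n \equiv 3 \pmod 6$.

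For sufficiency, I would give an explicit construction in each residue class. If $n = 3m$ with $m = 2k+1$ odd (the case $n \equiv 3 \pmod 6$), apply \textbf{Bose's construction} on $V = \mathbb{Z}_m \times \{0,1,2\}$: declare a subset to be a triple exactly when it is of the form $\{(i,0),(i,1),(i,2)\}$ or of the form $\{(i,s),(j,s),(\tfrac{i+j}{2},s{+}1)\}$ with $i \neq j$ in $\mathbb{Z}_m$ and $s \in \mathbb{Z}_3$. The midpoint $\tfrac{i+j}{2}$ is well defined because $2$ is invertible modulo the odd number $m$; a routine case check (pairs in the same column, pairs in the same fiber over $\mathbb{Z}_m$, pairs in adjacent columns) confirms that every pair lies in a unique triple. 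If $n = 6k+1$ (the case $n \equiv 1 \pmod 6$), I would use the analogous \textbf{Skolem construction} on $\mathbb{Z}_{2k} \times \{0,1,2\} \cup \{\infty\}$, where triples across the three columns are indexed by a Skolem (or hooked Skolem) sequence on $\{1,\ldots,2k\}$; such sequences exist for every $k \geq 1$, so the construction covers all admissible residues $\equiv 1 \pmod 6$.

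The principal obstacle is the backward direction: the necessity follows from a few lines of counting, whereas sufficiency requires producing, for every admissible $n$, a concrete triple system and verifying the Steiner property. An arguably cleaner alternative I would consider, if the Skolem bookkeeping feels unpleasant, is a \emph{recursive} route: verify the three small base cases $\mathrm{STS}(3)$, $\mathrm{STS}(7)$, $\mathrm{STS}(9)$ by hand, and then prove the product lemma that if $\mathrm{STS}(u)$ and $\mathrm{STS}(w)$ are nonempty then so is $\mathrm{STS}(uw)$ via a Kronecker-style construction using a Steiner quasigroup on the product. Since $\{3,7,9\}$ multiplicatively generates every integer $\equiv 1,3 \pmod 6$ modulo the congruence obstructions already handled by the forward direction, this would yield the full sufficiency inductively and avoid any separate analysis of the two residue classes.
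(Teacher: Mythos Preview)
The paper does not prove this theorem; it is introduced with ``We note the following well-known result from design theory'' and no argument is supplied. Your primary plan --- necessity by the standard double count, sufficiency via the Bose construction for $n \equiv 3 \pmod 6$ and a Skolem-type construction for $n \equiv 1 \pmod 6$ --- is the textbook route and is correct. The paper in fact recalls the Bose systems later in Section~\ref{sec:bose}, so your choice meshes well with the surrounding material.

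Your \emph{alternative} recursive route, however, has a genuine gap. The claim that $\{3,7,9\}$ multiplicatively generates every integer $\equiv 1,3 \pmod 6$ is false: any prime $p \equiv 1 \pmod 6$ with $p>7$ (for instance $p=13$, $19$, $31$) is admissible but cannot be written as a product of powers of $3$, $7$, and $9$. The residue set $\{1,3\} \pmod 6$ is closed under multiplication, but the corresponding multiplicative monoid is not finitely generated --- it contains infinitely many primes. A Kronecker product construction together with a finite list of base cases therefore cannot reach all admissible $n$. Recursive strategies that do succeed (such as the $v \mapsto 2v+1$ and $v \mapsto 2v+7$ constructions, or Wilson-type machinery) are structurally different and do not reduce to a single product lemma. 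Keep the Bose/Skolem argument and drop the alternative.
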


Suppose that $S^-$ and $S^+$ are two Steiner triple systems on $V$. We
say that $S^-$ and $S^+$ are disjoint when $S^- \cap S^+ = \emptyset$.
Let $S^- \cup S^+$ denote the abstract simplicial complex generated by
their union.  If $S^-$ and $S^+$ are disjoint, $S^- \cup S^+$ is a
$1$-neighborly pseudo-surface (as every $1$-simplex lies in exactly
two $2$-simplices) of Euler characteristic
\begin{equation}\label{eq:eulerSL}
  \chi(S^- \cup S^+) =
   n-\binom n2 + \frac{2}{3}\binom n2 = -\frac{n(n-7)}{6}.
\end{equation}
The Euler characteristic is odd when $n \equiv 1 \bmod 12$ or $n
\equiv 9 \bmod 12$ and even when $n \equiv 3 \bmod 12$ or $n \equiv 7
\bmod 12$.  

\begin{figure}[t]
  \centering

\begin{tikzpicture}[vertex/.style= {shape=circle,  
   fill={#1!100}, minimum size =
  6pt, inner sep =0pt,draw}, vertex/.default=black, scale=1.3]
 \pgfmathsetmacro{\m}{7}  //\m is not used here
 \pgfmathsetmacro{\r}{2*sin(360/7)};
 \pgfmathsetmacro{\a}{360/7}
  \foreach \d in {1,...,7} {
    \draw[fill=green!20] 
    (0,0) -- (\d*\a : \r ) node[vertex] {}  
    -- (1/2*\a+\d*\a : \r ) node[vertex] {} -- cycle;};
   \foreach \d/\l in {1/3,2/5,3/7,4/9,5/11,6/13,7/15}
   \node at ($(\d*\a : \r+0.25)$)  {$\l$};
    \foreach \d/\l in {1/2,2/4,3/6,4/8,5/10,6/12,7/14}
   \node at ($(-1/2*\a+\d*\a : \r+0.25)$)  {$\l$};
   \node[vertex] at (0,0) {};
\end{tikzpicture}

  \caption{A vertex star in a Steiner triple system on $n=15$ vertices.}
  \label{fig:vertexlink}
\end{figure}
 
\begin{defn}\label{defn:steinertria} 
  An {\em (orientable) Steiner surface\/} on $V$ is a pair, $S^-, S^+
  \in \mathrm{STS}(V)$, of disjoint Steiner triple systems on $V$ so
  that $S^- \cup S^+$ is an (orientable) triangulated surface. 

  An (orientable) {\em transversal\/} to the Steiner triple system $S
  \in \mathrm{STS}(V)$ is a symmetry $T \in \Sigma(V)$ such that
  $(S,S^T)$ is an (orientable) Steiner surface on $V$.
\end{defn}

The genus of a Steiner surface on $V$ is $\frac{1}{6}(n-4)(n-3)$ if it
is nonorientable and $\frac{1}{12}(n-4)(n-3)$ if it is orientable.
Orientable Steiner surfaces exist only for $n \equiv 3,7 \bmod 12$.

Suppose that $S^-$ and $S^+$ are two Steiner triple systems on $V$ and
$\mu^-$ and $\mu^+$ the corresponding Steiner quasigroups on $V$.  The
transition map
\begin{equation}
  \label{eq:transition}
  \sigma \colon \mathrm{STS}(V) \times V \times \mathrm{STS}(V) 
  \to \Sigma(V), \quad y^{\sigma(\mu^-,x,\mu^+)} =
  \mu^-(x,\mu^+(x,y)), \qquad x,y \in V, \quad \mu^-,\mu^+ \in
  \mathrm{STS}(V),
\end{equation}
records the transition from $\mu^+$ to $\mu^-$ in that
$\mu^+(x,y)^{\sigma}= \mu^-(x,y)$ and $\mu^-(x,y^\sigma)=\mu^+(x,y)$
where $\sigma =\sigma(\mu^-,x,\mu^+)$. Note also that
$\mu^+(x,y^\sigma)^\sigma = \mu^+(x,y)$ and
$\mu^+(x,y^{\sigma^j})^{\sigma^{j}} = \mu^+(x,y)$ for all natural
numbers $j$ by induction.

\begin{prop}\label{prop:Mmupm}
    $\text{$(\mu^-,\mu^+)$ is a Steiner surface on $V$} \iff
    \forall x \in V \colon 
    \text{$\sigma(\mu^-,x,\mu^+)$ has cycle structure $1^1m^2$}$.
\end{prop}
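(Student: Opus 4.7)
My plan is to reduce the proposition to a local statement about the link of an arbitrary vertex. Since $(\mu^-,\mu^+)$ being a Steiner surface requires (by Definition~\ref{defn:steinertria}) that $S^-=S(\mu^-)$ and $S^+=S(\mu^+)$ be disjoint, the union $S^-\cup S^+$ is automatically a $1$-neighborly pseudo-surface: every edge lies in exactly one $2$-simplex of $S^-$ and exactly one of $S^+$. Therefore $(\mu^-,\mu^+)$ is a Steiner surface if and only if, for every $x\in V$, the link of $x$ in $S^-\cup S^+$ is a single cycle. I will fix $x$ and show that this holds precisely when $\sigma=\sigma(\mu^-,x,\mu^+)$ has cycle structure $1^1m^2$.

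First I describe the link of $x$. Its vertex set is $V\setminus\{x\}$, which has $2m$ elements, and each $y\neq x$ is joined to $\mu^+(x,y)$ by an edge coming from $S^+$ (a $+$-edge) and to $\mu^-(x,y)$ by an edge coming from $S^-$ (a $-$-edge). Disjointness forces $\mu^-(x,y)\neq \mu^+(x,y)$, so the link is a simple $2$-regular graph in which exactly one $+$-edge and one $-$-edge meet at every vertex; hence it is a disjoint union of cycles whose edge types strictly alternate. I also note that $\sigma$ fixes $x$, since $\sigma(x)=\mu^-(x,\mu^+(x,x))=\mu^-(x,x)=x$; so $\sigma$ restricts to a permutation of $V\setminus\{x\}$.

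The heart of the proof is a correspondence between link-cycles at $x$ and $\sigma$-orbits on $V\setminus\{x\}$. Given a cycle $y_0,y_1,\ldots,y_{2k-1},y_0$ in the link with $\{y_0,y_1\}$ a $+$-edge, the alternation property gives $y_{2j+1}=\mu^+(x,y_{2j})$ and $y_{2j+2}=\mu^-(x,\mu^+(x,y_{2j}))=y_{2j}^{\sigma}$, so the even-indexed vertices form one $\sigma$-orbit of length~$k$. A parallel computation starting from $\{y_1,y_2\}$ (a $-$-edge) yields $y_{2j+3}=\sigma^{-1}(y_{2j+1})$, so the odd-indexed vertices form a second $\sigma$-orbit of length~$k$; these two orbits are disjoint because the $2k$ listed vertices of the link cycle are pairwise distinct. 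Conversely, any pair of $\sigma$-orbits of the same size~$k$ can be knitted together by the alternating $\pm$-rule into a single link cycle of length $2k$. Thus the link of $x$ is a single $2m$-cycle if and only if $\sigma|_{V\setminus\{x\}}$ is a product of exactly two $m$-cycles, i.e., $\sigma$ has global cycle type $1^1m^2$; quantifying over $x\in V$ finishes the proof.

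The subtle bookkeeping step, and where I expect the main care to be needed, is the $2{:}1$ matching in the preceding paragraph: one has to verify that the two $\sigma$-orbits obtained from a single link cycle are genuinely distinct rather than combining into a single $2m$-orbit, and that the matching is invertible. Once this is established, the cycle-type condition $1^1m^2$ is forced rather than the a priori weaker $1^1(2m)^1$.
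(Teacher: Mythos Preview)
Your overall strategy---reduce to the link at each $x$ and set up a $2{:}1$ correspondence between link cycles and $\sigma$-orbits---matches the paper's approach, and your treatment of the forward direction is correct and in fact a bit cleaner than the paper's: once the link is a $2m$-cycle, the even-indexed vertices visibly form one $\sigma$-orbit and the odd-indexed vertices another, so the ``could it be a single $2m$-orbit?'' worry you flag is a non-issue.

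There is, however, a genuine gap in the backward direction. You write ``Disjointness forces $\mu^-(x,y)\neq\mu^+(x,y)$, so the link is a simple $2$-regular graph\ldots''\ and then run the correspondence. But in the implication $1^1m^2 \Rightarrow$ Steiner surface, disjointness of $S(\mu^-)$ and $S(\mu^+)$ is part of what you must \emph{prove}; it is not available as a hypothesis. Without it you do not yet know the link is $2$-regular, so your link-cycle/orbit correspondence does not apply as stated. The paper closes this gap first: if $\mu^-(x,y)=\mu^+(x,y)$ for some $y\neq x$, set $z=\mu^+(x,y)$ and compute
\[
z^{\sigma}=\mu^-\bigl(x,\mu^+(x,z)\bigr)=\mu^-\bigl(x,\mu^+(x,\mu^+(x,y))\bigr)=\mu^-(x,y)=z,
\]
so $z\neq x$ is a second fixed point of $\sigma$, contradicting the cycle type $1^1m^2$. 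Once disjointness is established this way, your $2$-regular link analysis goes through. (Equivalently, you could keep the multigraph link and note that a coincidence $\mu^-(x,y)=\mu^+(x,y)$ produces a length-$2$ link cycle, hence two extra fixed points of $\sigma$.)

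A minor wording point: your ``Conversely, any pair of $\sigma$-orbits of the same size $k$ can be knitted together\ldots'' overstates things. The pairing of orbits is not arbitrary; it is dictated by $\mu^+$ (the odd orbit paired with a given even orbit $O$ is $\{\mu^+(x,y):y\in O\}$). What you actually need, and what your argument gives, is the counting consequence: each link cycle contributes exactly two distinct $\sigma$-orbits, so a single link cycle is equivalent to exactly two $\sigma$-orbits on $V\setminus\{x\}$.
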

\begin{proof}
  Assume that $\sigma(\mu^-,x,\mu^+)$ has cycle structure $1^1m^2$.
  Suppose that $x,y \in V$, $x \neq y$, and $\mu^-(x,y) = \mu^+(x,y)$.
  Then $\sigma(\mu^-,x,\mu^+)$ fixes $\mu^-(x,y)$ so $x=\mu^-(x,y)$.
  This contradiction shows that $\mu^-$ and $\mu^+$ define disjoint
  Steiner triple systems.  Now consider the link at $x$ in the
  simplicial complex $S(\mu^-) \cup S(\mu^+)$. The orbits of
  $\sigma=\sigma(\mu^-,x,\mu^+)$ through $y$ and $\mu^+(x,y)$ consist
  of the $m$ distinct points $y,y^\sigma,\ldots,y^{\sigma^{m-1}}$ and
  $\mu^+(x,y),\mu^+(x,y)^\sigma,\ldots,\mu^+(x,y)^{\sigma^{m-1}}$,
  respectively. Observe that the permutations $y \to
  \mu^+(x,y)^{\sigma^j}$ fix $x$ and only $x$ as they are conjugate to
  one of the permutations $y \to \mu^\pm(x,y)$. Thus the orbits of
  $\sigma$ through $y$ and $\mu^+(x,y)$ are disjoint
  (Figure~\ref{fig:localorient}). It follows that the $2m$ vertices in
  the walk
  \begin{equation*}
    y \to \mu^+(x,y) \to y^\sigma \to \cdots \to
    \mu^+(x,y^{\sigma^{m-1}}) = \mu^+(x,y)^{\sigma} \to y^{\sigma^{m-1}}
  \end{equation*}
  are all distinct. We conclude that the link at $x$ is the cyclic
  triangulation of $S^1$ on $2m$ vertices.

Conversely, assume that $(\mu^-,\mu^+)$ is a Steiner surface.  Let
$x,y \in V$, $x \neq y$.  Since $x$ belongs to $m$ triples from
$\mu^\pm$ the link at $x$ is a $2m$-cycle
   \begin{equation*}
     y \to \mu^+(x,y) \to y^\sigma \to \mu^+(x,y)^\sigma \to \cdots
     \to y^{\sigma^m} = y
   \end{equation*}
   starting at any point $y \neq x$. Thus the orbit of $\sigma$
   through $y$ has size $m$.
\end{proof}

Given an action of a group $H$ on $V$, a {\em transversal\/} is a subset
$V_H \subseteq V$ containing exactly one element from each $H$-orbit
in $V$.

\begin{lemma}\label{lemma:shiftA}
  Suppose that $A \leq \Sigma(\mu^-) \cap \Sigma(\mu^+)$. Then
  $(\mu^-,\mu^+)$ is a Steiner surface if and only if
  \begin{equation*}
    \forall y \in V_A \colon \,\,
    \text{$\sigma(\mu^-,x,\mu^+)$ has cycle structure $1^1m^2$},
  \end{equation*}
  where $V_A \subseteq V$ is transversal to the action of $A$ on $V$.
\end{lemma}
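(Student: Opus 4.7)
The plan is to reduce the lemma directly to Proposition \ref{prop:Mmupm} via an equivariance argument. By that proposition, $(\mu^-,\mu^+)$ is a Steiner surface if and only if $\sigma(\mu^-,x,\mu^+)$ has cycle structure $1^1m^2$ for every $x \in V$. So the only content of the lemma is that, under the hypothesis $A \leq \Sigma(\mu^-) \cap \Sigma(\mu^+)$, the cycle structure of $\sigma(\mu^-,x,\mu^+)$ depends only on the $A$-orbit of $x$; then checking it on a transversal $V_A$ is the same as checking it on all of $V$.

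The key computational step I would carry out is the identity
$$\sigma(\mu^-, x^a, \mu^+) = a^{-1}\, \sigma(\mu^-, x, \mu^+)\, a \qquad \text{for all } a \in A, \ x \in V.$$
To verify this, I unpack the definition $y^{\sigma(\mu^-,x^a,\mu^+)} = \mu^-(x^a, \mu^+(x^a, y))$, write $y = z^a$ with $z = y^{a^{-1}}$, and use the $A$-invariance of $\mu^+$, i.e.\ $\mu^+(x^a, z^a) = \mu^+(x,z)^a$, to pull $a$ outside the inner operation. Applying the $A$-invariance of $\mu^-$ to the result gives $\mu^-(x,\mu^+(x,z))^a = z^{\sigma(\mu^-,x,\mu^+)\,a} = y^{a^{-1}\,\sigma(\mu^-,x,\mu^+)\,a}$, which is the claimed conjugation formula.

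Since conjugation in $\Sigma(V)$ preserves cycle structure, the permutations $\sigma(\mu^-,x,\mu^+)$ and $\sigma(\mu^-,x^a,\mu^+)$ have identical cycle structure for every $a \in A$. Combined with Proposition \ref{prop:Mmupm}, this shows that verifying the $1^1m^2$ condition on an $A$-transversal $V_A$ is equivalent to verifying it on all of $V$, which is exactly the claim.

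I do not anticipate any real obstacle: the whole argument is a one-line consequence of Proposition \ref{prop:Mmupm} once the equivariance formula above is established, and the latter is a routine unwinding of definitions. The only mildly subtle point is keeping track of left versus right in the expression $y^{\sigma}$ when substituting $x^a$ for $x$, but using the intermediate variable $z = y^{a^{-1}}$ makes the computation transparent.
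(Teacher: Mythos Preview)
Your proposal is correct and is essentially the same as the paper's own proof: the paper simply states the conjugation identity $\sigma(\mu^-,x^a,\mu^+) = \sigma(\mu^-,x,\mu^+)^a$ for $a \in \Sigma(\mu^-) \cap \Sigma(\mu^+)$ and leaves the reduction to Proposition~\ref{prop:Mmupm} implicit. Your write-up is a fleshed-out version of that one-line argument, with the derivation of the conjugation formula made explicit.
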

\begin{proof}
  For any $a \in \Sigma(\mu^-,\mu^+)$,
  $\sigma(\mu^-,x^a,\mu^+) = \sigma(\mu^-,x,\mu^+)^a$ for all $x \in
  V$.
\end{proof}

\begin{figure}[t]
\centering

\begin{tikzpicture}[vertex/.style= {shape=circle,  
   fill={#1!100}, minimum size =
  6pt, inner sep =0pt,draw}, vertex/.default=black, scale=1.25]

\begin{scope}[xshift=-5cm,>= stealth',scale=1.3]
  \node (A) at (0,0.75) {$\tau_u^+$};
  \draw[fill=green!20] ($(A)+(90:1)$) node[above] {$\mu^+(x,y)$} -- 
  ($(A)+(210:1)$) node[left] {$x$} -- node[midway] {$>$} 
  ($(A)+(330:1)$) node[right] {$y$} -- cycle;

  \draw[->] 
  ($(A)+(45:.25)+(0,-.25)$) arc[radius=.25, start angle=-45, end angle=235];
  \node at ($(A)+(1.5,.5)$) {$x \in \sigma^+(\mu^+(x,y))$}; 
  
  \node (B) at (0,-0.75) {$\tau_v^-$};
  \draw[fill=yellow!20] ($(B)+(-90:1)$) node[below] {$\mu^-(x,y)$} -- 
  ($(B)+(-210:1)$) node[left] {$x$} -- node[midway] {$<$} 
  ($(B)+(-330:1)$) node[right] {$y$} -- cycle;

  \draw[->] 
  ($(B)+(45:.25)+(0,-.4)$) arc[radius=.25, start angle=-45, end
  angle=235];
   \node at ($(B)+(1.5,-0.5)$) {$x \in \sigma^+(\mu^-(x,y))$}; 
\end{scope}

  \pgfmathsetmacro{\m}{7}  //\m is not used here
 \pgfmathsetmacro{\r}{2*sin(360/7)};
 \pgfmathsetmacro{\a}{360/7}
  \foreach \d in {1,...,7} {
    \draw[fill=green!20!white] 
    (0,0) -- (\d*\a : \r ) node[vertex] {}  
     -- (1/2*\a+\d*\a : \r )  node[vertex] {} -- cycle;};
   \foreach \d/\l in {1/3,2/5,3/7,4/9,5/11,6/13,7/15}
   \node at ($(\d*\a : \r+0.25)$)  {$\l$};
    \foreach \d/\l in {1/2,2/4,3/6,4/8,5/10,6/12,7/14}
   \node at ($(-1/2*\a+\d*\a : \r+0.25)$)  {$\l$};
   \node[vertex] at (0,0) {};
    \foreach \d in {1,...,7} {
    \draw[fill=yellow!20] 
    (0,0) -- (1/2*\a+\d*\a : \r ) node[vertex] {}  
     -- (\a+\d*\a : \r )  node[vertex] {} -- cycle;};
   \node[text width = 6.5cm, align = justify, anchor= north west] at
   (1.7,2)
   {\small $\mu^-(1,-) = (1)(2,3)(4,5)\cdots(14,15) \\
     \mu^+(1,-) = (1)(3,4)(5,6)\cdots(13,14)(15,2) \\
     \sigma(\mu^-,1,\mu^+) = (1)(14,12,\ldots,2)(3,5,\ldots,15)$};
   \draw[thick,->] 
   ($(\a : \r+0.45)$) arc[start angle = \a, delta angle =
   155, radius = 2cm];
   \node at  ($(90 : \r+0.75)$) {$(3,5,\cdots,15)$};
   \draw[thick,->] 
   ($(1/2*\a : \r+0.45)$) arc[start angle = 1/2*\a, delta angle =
   -155, radius = 2cm];
    \node at  ($(-90 : \r+0.75)$) {$(14,12,\cdots,2)$};
\end{tikzpicture}

  \caption{Local orientation.}
  \label{fig:localorient}
\end{figure}

\begin{defn}\label{defn:steinertriaorient}
 Suppose  $(\mu^-,\mu^+)$ is a Steiner surface on $V$.
 \begin{itemize}
 \item A local orientation for $(\mu^-,\mu^+)$ at $x \in V$ is a size
   $m$ orbit $\sigma^+(x)$ 
   for the transition permutation $\sigma(\mu^-,x,\mu^+)$.
 \item An orientation for $(\mu^-,\mu^+)$ is a function, $x \to
   \sigma^+(x)$, that to every $x \in V$ associates a local orientation
   $\sigma^+(x)$ at $x$ in such a way that
  \begin{equation*}
    \forall x,y \in V  \colon \,\,
    x \in \sigma^+(\mu^+(x,y)) \iff   x \in \sigma^+(\mu^-(x,y)).
  \end{equation*}
\item An auto\m\ $f$ of a Steiner surface $(\mu^-,\mu^+)$ with
  orientation $x \to \sigma^+(x)$ is orientation preserving if
  $f(\sigma^+(x)) = \sigma^+(f(x))$ for all $x \in V$.
 \end{itemize}
  \end{defn}

\begin{prop}\label{prop:Mmupmotient}
   Suppose  $(\mu^-,\mu^+)$ is a Steiner surface on $V$.
   \begin{equation*}
     \text{The triangulated surface $S(\mu^-) \cup S(\mu^+)$ admits 
       an orientation} \iff 
     \text{$(\mu^-,\mu^+)$ admits an orientation.}
   \end{equation*}
\end{prop}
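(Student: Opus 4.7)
The plan is to show that local orientations $\sigma^+(x)$ encode cyclic directions on vertex links, and that the compatibility condition of Definition~\ref{defn:steinertriaorient} is precisely the gluing condition for an orientation of $S(\mu^-) \cup S(\mu^+)$. Write $\sigma_x = \sigma(\mu^-, x, \mu^+)$ for brevity.

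First I would spell out the structure of the link $\operatorname{lk}(x)$. From the proof of Proposition~\ref{prop:Mmupm}, $\operatorname{lk}(x)$ is the $2m$-cycle
$$y \to \mu^+(x,y) \to y^{\sigma_x} \to \mu^+(x,y)^{\sigma_x} \to \cdots \to y,$$
whose edges alternate between $\mu^+$ edges (from the $\mu^+$ triangles at $x$) and $\mu^-$ edges (from the $\mu^-$ triangles), and whose vertex set is partitioned into the two $\sigma_x$-orbits of size $m$, sitting at alternating positions of the cycle. A choice of local orientation $\sigma^+(x)$ thus selects one of these two color classes and---by the convention that $\mu^+$ edges point from $\sigma^+(x)$ to its complement---determines a cyclic direction on $\operatorname{lk}(x)$. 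This in turn assigns a cyclic order to every triangle containing $x$.

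Next I would translate the orientability condition edge-by-edge. For an edge $\{x, y\}$ shared by $\{x, y, z\}$ with $z = \mu^+(x,y)$ and $\{x, y, w\}$ with $w = \mu^-(x,y)$, orientability of $S(\mu^-)\cup S(\mu^+)$ requires that the cyclic directions at $z$ and $w$ induce opposite orientations on the edge $\{x, y\}$. Following the convention above, the $\mu^+$ edge $\{x,y\}$ in $\operatorname{lk}(z)$ is oriented from $x$ to $y$ iff $x \in \sigma^+(z)$, while the $\mu^-$ edge $\{x, y\}$ in $\operatorname{lk}(w)$ is oriented from $y$ to $x$ iff $x \in \sigma^+(w)$. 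Hence opposite orientations at the two endpoints of the shared edge are equivalent to $x \in \sigma^+(z) \iff x \in \sigma^+(w)$, the compatibility condition in the definition.

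Both directions of the proposition now follow. Given an orientation of the combinatorial surface, the induced cyclic directions at every link define $\sigma^+$-orbits satisfying the compatibility condition. Conversely, any $\sigma^+$ satisfying the compatibility condition determines matching triangle orientations across every shared edge and hence a global orientation. The main technical care is in pinning down the convention relating a $\sigma_x$-orbit, a cyclic direction on $\operatorname{lk}(x)$, and the induced orientation of the triangles at $x$; once that convention is fixed, the equivalence is essentially a direct translation.
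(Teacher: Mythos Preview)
Your proposal is correct and follows essentially the same approach as the paper: both identify a local orientation $\sigma^+(x)$ with a cyclic direction on $\operatorname{lk}(x)$, and both argue that the compatibility condition of Definition~\ref{defn:steinertriaorient} is precisely what makes these local choices glue to a global orientation of the surface. The paper's version is terser---it simply declares $(x,\mu^\pm(x,y),y)$ to be a positive ordering whenever $x\in\sigma^\pm(y)$ and asserts consistency---whereas you unpack the link structure and carry out the edge-by-edge verification explicitly, which is a reasonable expansion of the same argument.
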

\begin{proof}
  Let $V \ni x \to \sigma^+(x)$ be an orientation of the Steiner
  surface $(\mu^-,\mu^+)$. For every $x \in V$, let $\sigma^-(x)$
  denote the size $m$ orbit distinct from $\sigma^+(x)$ of the
  permutation $\sigma(\mu^-,x,\mu^+)$.  Whenever $x$ and $y$ are
  distinct points of $V$ and $x \in \sigma^\pm(y)$, declare
  $(x,\mu^\pm(x,y),y)$ to be a positive permutation of the vertices of
  the $2$-simplex $\{x,\mu^\pm(x,y),y\} \in S(\mu^-) \cup S(\mu^+)$.
  This is a consistent orientation of the $2$-simplices of the
  triangulated surface $S(\mu^-) \cup S(\mu^+)$.
  
  If the surface $S(\mu^-) \cup S(\mu^+)$ is
  oriented then the induced orientation of each vertex link picks out
  one of the $m$-cycles of that vertex link.
\end{proof}

Let $a \in \Sigma(\mu^-)$ be a symmetry of the Steiner quasigroup
$\mu^-$ and let $x,y \in V$. From 
\begin{equation*}
  y^{a\sigma(\mu^-,x^a,\mu^+)} = \mu^-(x^a,\mu^+(x^a,y^a))
  = \mu^-(x^a,(\mu^+)^{a^{-1}}(x,y)^a)
  = \mu^-(x^a,(\mu^+)^{a^{-1}}(x,y))^a
  = y^{\sigma(\mu^-,x,(\mu^+)^{a^{-1}})a} 
\end{equation*}
we see that $\sigma(\mu^-,x^a,\mu^+) = \sigma(\mu^-,x,{}^a\mu^+)^a$
where we write ${}^a\mu^+ = \big(\mu^+\big)^{a^{-1}}$ for the left
action of $\Sigma(\mu^-)$ on $\mathrm{STS}(V)$.  Thus $(\mu^-,\mu^+)$
is an (orientable) Steiner surface if and only if $(\mu^-,{}^a\mu^+)$
is an (orientable) Steiner surface. (If $x \to \sigma^+(x)$ is a local
orientation for $(\mu^-,\mu^+)$ then $ x \to \sigma(x^a)^{a^{-1}}$ is
a local orientation for $(\mu^-,{}^a\mu^+)$.)

The following lemma shows that a Steiner surface is orientable if and
only if the local orientation at any point propagates coherently to
local orientations at all other points.

\begin{lemma}\label{lemma:propagate}
Suppose that $(\mu^-,\mu^+)$ is a Steiner surface on $V$. 
Let $v$ be a vertex in $V$ and $\sigma^+(v)$ a local orientation at
$v$. Then $(\mu^-,\mu^+)$ is orientable if and only if 
\begin{equation*}
  \forall u_1,u_2 \in \sigma^+(v) \forall 
  x \in v^{\gen{\sigma(\mu^-,u_1,\mu^+)}}
  \cap v^{\gen{\sigma(\mu^-,u_2,\mu^+)}} \colon 
  u_1^{\gen{\sigma(\mu^-,x,\mu^+)}}   = u_2^{\gen{\sigma(\mu^-,x,\mu^+)}}
\end{equation*}
or, equivalently, if and only if the projection onto the first
coordinate
\begin{equation*}
  \bigcup_{u \in \sigma^+(v)} \{(x,u^{\gen{\sigma(\mu^-,x,\mu^+)}}) \mid
  x \in v^{\gen{\sigma(\mu^-,u,\mu^+)}} \} \to V
\end{equation*}
is a bijection.
\end{lemma}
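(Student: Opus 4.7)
The plan is to interpret the displayed union as the record of propagating the local orientation $\sigma^+(v)$ outward by two steps, and then to reduce the question of orientability to consistency of this propagation. First I would note that because $\sigma(\mu^-,u,\mu^+)$ has cycle structure $1^1m^2$ with fixed point $u$, the orbit $v^{\gen{\sigma(\mu^-,u,\mu^+)}}$ is one of the two $m$-orbits in the link of $u$, namely the one containing $v$. The natural extension of $\sigma^+(v)$ to a positive neighbor $u \in \sigma^+(v)$ is therefore $\sigma^+(u) := v^{\gen{\sigma(\mu^-,u,\mu^+)}}$; iterating once more gives $\sigma^+(x) := u^{\gen{\sigma(\mu^-,x,\mu^+)}}$ for $x \in \sigma^+(u)$. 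The displayed union collects all such propagated pairs $(x,\sigma^+(x))$, and the equivalence of the two formulations is essentially formal: failure of injectivity of the projection to $V$ is precisely the existence of $u_1,u_2 \in \sigma^+(v)$ reaching a common $x$ with conflicting orbits, which is the first formulation of the hypothesis.

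For the forward implication, suppose a global orientation $x \to \sigma^+(x)$ extends the chosen $\sigma^+(v)$. A short derivation from the orientation axiom of Definition~\ref{defn:steinertriaorient} --- substituting $y = \mu^+(x,w)$ so that $\mu^+(x,y) = w$ and $\mu^-(x,y) = w^{\sigma(\mu^-,x,\mu^+)}$ --- shows that the truth of the statement $x \in \sigma^+(w)$ depends only on the orbit of $w$ under $\sigma(\mu^-,x,\mu^+)$. Applying this at $x = v$ pins down $\sigma^+(u)$ in terms of $\sigma^+(v)$ for each $u \in \sigma^+(v)$, and applying it again at $x = u$ pins down $\sigma^+(x)$ for $x$ in the second neighborhood of $v$. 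Since the intrinsic value $\sigma^+(x)$ is single-valued, the two-step formulas starting from $u_1$ and from $u_2$ must agree at any common $x$, which is precisely the displayed orbit equality.

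For the converse, I would use the consistency hypothesis to define a candidate function $x \to \sigma^+(x)$ through the two-step propagation formula (with $x = v$ and $x \in \sigma^+(v)$ handled directly) and then verify the orientation axiom at every edge $\{x,y\}$. The principal obstacle, and the technical heart of the argument, is that the hypothesis is anchored at the single vertex $v$ whereas the axiom is required on every edge. My plan is to fix an arbitrary edge $\{x,y\}$, select intermediaries $u_x, u_y \in \sigma^+(v)$ that realize $\sigma^+(x)$ and $\sigma^+(y)$ via the propagation, and then combine the Steiner quasigroup identities $\mu^\pm(x,\mu^\pm(x,y)) = y$ with repeated applications of the 2-step consistency hypothesis (at suitably chosen witnesses involving $\mu^\pm(x,y)$) to match $x \in \sigma^+(\mu^+(x,y))$ with $x \in \sigma^+(\mu^-(x,y))$. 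The delicate point is coordinating the choices of intermediaries so that the orbit assignments obtained via $v$ from two different two-step paths paste together coherently; this reduction of the edge axiom to the single-vertex hypothesis is where the argument should be most intricate.
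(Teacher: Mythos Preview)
Your forward direction coincides with the paper's: both of you rewrite the orientation axiom via the substitution $u=x$, $w=\mu^+(x,y)$ (so $\mu^-(x,y)=w^{\sigma(u)}$) to obtain the one-step recurrence
\[
  u \in \sigma^+(w) \iff u \in \sigma^+\bigl(w^{\sigma(u)}\bigr),
\]
and then iterate it along the $\sigma(u)$-orbit of $w$ to conclude that $\sigma^+(x)$ is forced to be the $\sigma(x)$-orbit through $u$ for every $x$ in $v^{\gen{\sigma(u)}}$. This is exactly the paper's derivation, phrased slightly differently.

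For the converse the paper is much terser than your plan. It does not select intermediaries $u_x,u_y$ or verify the axiom edge by edge; it simply records that the iterated recurrence forces the propagation formula and stops at the sentence ``This means that $\sigma^+(x)$ is the orbit through $u$ of $\sigma(x)$ for all $x$ in the orbit through $v$ of $\sigma(u)$.'' Your concern that the hypothesis is anchored at the single vertex $v$ and might not obviously yield the axiom on an arbitrary edge is not explicitly addressed in the paper either; the paper treats this as immediate from the recurrence reformulation. So your approach and the paper's are essentially the same, with your proposal being more explicit about where the residual work lies. One concrete suggestion: adopt the paper's one-step recurrence form (membership of $u$ in $\sigma^+(\cdot)$ is constant along $\sigma(u)$-orbits) rather than your two-step framing---it is symmetric between the roles of $u$ and $v$ and should streamline the edge verification you have in mind.
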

\begin{proof}
  The condition of Definition~\ref{defn:steinertriaorient} for an
  orientation is that
\begin{equation*}
  u \in \sigma^+(v) \iff u \in \sigma^+(v^{\sigma(u)})
\end{equation*}
for all distinct vertices $u,v \in V$. (Put $u=x$ and $v=\mu^+(x,y)$
so that $\mu^-(x,y) = \mu^+(x,y)^{\sigma(x)} = v^{\sigma(x)} =
v^{\sigma(u)}$.)  Repeated application of this gives
\begin{equation*}
  u \in \sigma^+(v) \iff u \in \sigma^+(v^{\sigma(u)^i})
\end{equation*}
or, equivalently,
\begin{equation*}
  u \in \sigma^+(v) \iff 
  \forall x \in v^{\gen{\sigma(u)}} \colon u \in \sigma^+(x).
\end{equation*}
This means that $\sigma^+(x)$ is the orbit through $u$ of $\sigma(x)$
for all $x$ in the orbit through $v$ of $\sigma(u)$.
\end{proof}

\begin{lemma}\label{lemma:LAmu}
  Suppose that $A \leq \Sigma(\mu^-,\mu^+)$ for some $A \leq
  \Sigma(V)$. Let $s^+ \in \Sigma(V)$ be a fixed-point free
  permutation of $V$ centralizing $A$. Then $V \ni x \to
  s^+(x)^{\gen{\sigma(\mu^-,x,\mu^+)}}$ is an orientation for the
  Steiner surface $(\mu^-,\mu^+)$ if and only if
  \begin{equation*}
    \forall x \in V\,\, \forall y \in V_A \colon \,\,
    x \in s^+(\mu^+(x,y))^{\gen{\sigma(\mu^-,x,\mu^+)}} \iff 
    x \in s^+(\mu^-(x,y))^{\gen{\sigma(\mu^-,x,\mu^+)}},
  \end{equation*}
  where $V_A \subseteq V$ is transversal to the action of
  $A$ on $V$.
\end{lemma}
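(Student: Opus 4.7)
The plan is to treat the two directions separately. For the forward direction, I would simply observe that the displayed biconditional is nothing but the restriction to $y \in V_A \subseteq V$ of the global orientation condition of Definition~\ref{defn:steinertriaorient}, applied to the proposed orientation $\sigma^+(z) := s^+(z)^{\langle\sigma(\mu^-, z, \mu^+)\rangle}$. So if the map $x \mapsto \sigma^+(x)$ is an orientation, the conclusion is immediate.

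For the converse, my first step is a sanity check that each $\sigma^+(x)$ is actually a well-defined local orientation at $x$. Proposition~\ref{prop:Mmupm} tells me that $\sigma(\mu^-, x, \mu^+)$ has cycle structure $1^1 m^2$ with $x$ as the unique fixed point, so the fixed-point-freeness of $s^+$ forces $s^+(x) \neq x$ to sit in one of the two $m$-cycles and generate a size-$m$ orbit, which is by definition a local orientation.

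The core of the argument is to upgrade the hypothesis from $y \in V_A$ to all $y \in V$ using the $A$-action. Given $y \in V$, I would write $y = y_0^a$ with $y_0 \in V_A$ and $a \in A$. Since $a \in A \leq \Sigma(\mu^-) \cap \Sigma(\mu^+)$, one has $\mu^\pm(x,y) = \mu^\pm(x^{a^{-1}}, y_0)^a$; since $a$ centralizes $s^+$, $s^+(z^a) = s^+(z)^a$ for every $z$; and the conjugation rule $\sigma(\mu^-, z^a, \mu^+) = \sigma(\mu^-, z, \mu^+)^a$ established in the paragraph preceding Lemma~\ref{lemma:propagate} applies. Combining these with the elementary orbit-equivariance identity $(w^a)^{\langle\tau^a\rangle} = (w^{\langle\tau\rangle})^a$ yields
\[
  \sigma^+(\mu^\pm(x,y)) \;=\; \bigl(\sigma^+(\mu^\pm(x^{a^{-1}}, y_0))\bigr)^a.
\]
Therefore $x \in \sigma^+(\mu^\pm(x,y))$ if and only if $x^{a^{-1}} \in \sigma^+(\mu^\pm(x^{a^{-1}}, y_0))$, and the orientation biconditional at $(x,y)$ is equivalent to the biconditional at $(x^{a^{-1}}, y_0)$, which is in the assumed range since $y_0 \in V_A$.

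The only genuine obstacle is keeping the equivariance clean under simultaneous conjugation of both the base point of the orbit and the permutation generating it; once that bookkeeping is in place, the argument mirrors the orbit-restriction strategy already used for Lemma~\ref{lemma:shiftA}.
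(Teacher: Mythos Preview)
Your proposal is correct and follows essentially the same approach as the paper: both arguments establish the $A$-equivariance $\sigma^+(\mu^\pm(x^a,y^a)) = \sigma^+(\mu^\pm(x,y))^a$ from the three ingredients $\mu^\pm(x^a,y^a)=\mu^\pm(x,y)^a$, $s^+(z^a)=s^+(z)^a$, and $\sigma(\mu^-,z^a,\mu^+)=\sigma(\mu^-,z,\mu^+)^a$, and use it to transport the orientation biconditional from the $A$-orbit of $y_0\in V_A$ to all of $V$. The only cosmetic difference is that the paper packages this via the sets $V^\pm(y)=\{x\mid x\in\sigma^+(\mu^\pm(x,y))\}$ and shows $V^\pm(y^a)=V^\pm(y)^a$, whereas you work directly with the pair $(x,y)\leftrightarrow(x^{a^{-1}},y_0)$; your added sanity check that $\sigma^+(x)$ really is a size-$m$ orbit is a worthwhile observation the paper leaves implicit.
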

\begin{proof}
  Put $\sigma(x) = \sigma(\mu^-,x,\mu^+)$ and $\sigma^+(x) =
  s^+(x)^{\gen{\sigma(x)}}$.  From the proof of
  Lemma~\ref{lemma:shiftA}, $\sigma(x^a) = \sigma(x)^a$ for all $a \in
  A$. Using the assumption that $s^+(x^a) = s^+(x)^a$, we see that
  $\sigma^+(\mu^{\pm}(x^a,y^a)) = \sigma^+(\mu^{\pm}(x,y))^a$ for all
  $x,y \in V$ and $a \in A$. Let $V^\pm(y) = \{ x \in V \mid x \in
  \sigma^+(\mu^\pm(x,y)) \}$. We have shown that $V^\pm(y^a) =
  V^\pm(y)^a$, and thus $V^+(y) = V^-(y) \iff V^+(y^a) = V^-(y^a)$
  for all $y \in V$. The lemma follows since $V \ni x \to \sigma^+(x)$
  is an orientation for $(\mu^-,\mu^+)$ if and only if $V^+(y) =
  V^-(y)$ for all $y \in V$.
\end{proof}

 Let $\mu$ be a Steiner quasigroup on $V$. We shall now discuss
 (orientable) transversals to $\mu$
 (Definition~\ref{defn:steinertria}). 

  The shift map of $\mu$, 
  \begin{equation}
    \label{eq:sigmaT}
    \sigma \colon V \times \Sigma(V) \to \Sigma(V), \quad y^{\sigma(x,T)}
    = \mu(x,\mu^T(x,y)), \qquad x,y\in V, \quad T \in \Sigma(V),
  \end{equation}
  takes $(x,T)$ to the transition permutation $\sigma(\mu,x,\mu^T)$
  from $\mu^T$ to $\mu$. Thus $\mu^T(x,y)^{\sigma(x,T)} = \mu(x,y)$.
  We call $\sigma(x,T)$ the $x$-shift of $T$ and $\sigma$.

\begin{prop}\label{prop:sigmaT}
  Let $\mu$ be a Steiner quasigroup on $V$ and $T \in \Sigma(V)$ a
  symmetry of $V$.
  \begin{equation*}
    \text{$T$ is transversal to $\mu$} \iff
    \forall x \in V \colon 
    \text{$\sigma(x,T)$ has cycle structure $1^1m^2$.}
  \end{equation*}
  If $T$ is transversal to $\mu$ then $T$ is orientably transversal to
  $\mu$ if and only if it there are transitive size $m$ orbits
  $\sigma^+(x,T)$ for $\sigma(x,T)$ at every $x \in V$ such that
 \begin{equation*}
  x^T \in \sigma^+(\mu(x^T,y^T),T) \iff x^T \in \sigma^+(\mu(x,y)^T,T) 
\end{equation*}
  for all $x,y \in V$.
\end{prop}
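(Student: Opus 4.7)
The plan is to deduce both equivalences as direct consequences of the general results about Steiner surfaces already established in Proposition~\ref{prop:Mmupm} and Definition~\ref{defn:steinertriaorient} / Proposition~\ref{prop:Mmupmotient}, by specialising to the pair $(\mu^-,\mu^+) = (\mu,\mu^T)$. By Definition~\ref{defn:steinertria}, $T$ is (orientably) transversal to $\mu$ exactly when $(\mu,\mu^T)$ is an (orientable) Steiner surface on $V$, and by construction the shift $\sigma(x,T)$ coincides with the transition permutation $\sigma(\mu,x,\mu^T)$. So the content is entirely a translation between the ``two Steiner quasigroup'' and the ``one Steiner quasigroup plus symmetry'' viewpoints.

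For the first equivalence, I would simply plug $(\mu^-,\mu^+) = (\mu,\mu^T)$ into Proposition~\ref{prop:Mmupm}: it says $(\mu,\mu^T)$ is a Steiner surface iff $\sigma(\mu,x,\mu^T) = \sigma(x,T)$ has cycle structure $1^1m^2$ for every $x \in V$, which is exactly the statement we want.

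Assume now that $T$ is transversal to $\mu$. For orientability, combine Proposition~\ref{prop:Mmupmotient} with Definition~\ref{defn:steinertriaorient}: $(\mu,\mu^T)$ admits a surface orientation iff one can choose, for every $x \in V$, a size $m$ orbit $\sigma^+(x) = \sigma^+(x,T)$ of $\sigma(\mu,x,\mu^T)$ so that
\begin{equation*}
  x \in \sigma^+\bigl(\mu^T(x,y),T\bigr) \iff x \in \sigma^+\bigl(\mu(x,y),T\bigr)
  \qquad \text{for all } x,y \in V.
\end{equation*}
The key observation is the equivariance identity $\mu^T(x^T,y^T) = \mu(x,y)^T$, which is immediate from the definition $\mu^T(u,v) = \mu(u^{T^{-1}},v^{T^{-1}})^T$. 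Substituting $x \mapsto x^T$, $y \mapsto y^T$ in the condition above (which is legitimate since $T$ is a bijection, so universal quantification over $(x,y)$ is equivalent to universal quantification over $(x^T,y^T)$) turns the left-hand member $\mu^T(x^T,y^T)$ into $\mu(x,y)^T$ and the right-hand member $\mu(x^T,y^T)$ into itself, giving precisely the stated form
\begin{equation*}
  x^T \in \sigma^+\bigl(\mu(x^T,y^T),T\bigr) \iff x^T \in \sigma^+\bigl(\mu(x,y)^T,T\bigr).
\end{equation*}

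There is no real obstacle: the proposition is a direct corollary of the general Steiner surface theory developed earlier, and the only non-bookkeeping step is the equivariance identity $\mu^T(x^T,y^T)=\mu(x,y)^T$ used to rewrite the orientation condition in ``$T$-shifted'' variables.
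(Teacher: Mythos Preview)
Your proposal is correct and matches the paper's approach: the paper states Proposition~\ref{prop:sigmaT} without an explicit proof, treating it as an immediate specialisation of Proposition~\ref{prop:Mmupm} and Definition~\ref{defn:steinertriaorient}/Proposition~\ref{prop:Mmupmotient} to the pair $(\mu^-,\mu^+)=(\mu,\mu^T)$, using $\sigma(x,T)=\sigma(\mu,x,\mu^T)$ and the equivariance $\mu^T(x^T,y^T)=\mu(x,y)^T$. Your write-up simply makes these implicit steps explicit.
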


The auto\m\ group of $\mu^T$ is $\Sigma(\mu^T)=\Sigma(\mu)^T$.  The
intersection 
\begin{equation}
  \label{eq:smucapsmuT}
\Sigma(\mu) \cap \Sigma(\mu)^T = \{ a \in \Sigma(V) \mid
 \mu^a = \mu, \, \mu^{Ta} = \mu^T \}  
\end{equation}
acts on the simplicial complex $S(\mu) \cup S(\mu)^T$. If $T$ is
transversal to $\mu$ then $\Sigma(\mu) \cap \Sigma(\mu)^T$ is
contained in the auto\m\ group of the triangulated surface $S(\mu)
\cup S(\mu)^T$ with index at most $2$.

\pagebreak

\begin{prop}\label{prop:transversal}
  Let $T \in \Sigma(V)$ be a symmetry of $V$.
  \begin{enumerate}

  \item \label{prop:transversal1}   
  If $a \in \Sigma(V)$ is a symmetry of $V$ then
  \begin{equation*}
  a \in \Sigma(\mu) \iff \mu^a= \mu \iff \mu^{aT}=\mu^T 
  \iff \forall
  x \in V \colon \sigma(x,aT) = \sigma(x,T) \iff \forall x \in V
  \colon \sigma(x^a,T^a) = \sigma(x,T)^a.
  \end{equation*}

  \item \label{prop:transversal3} 
    If $a \in \Sigma(\mu)$ is a symmetry of $\mu$ then 
    \begin{equation*}
      a \in \Sigma(\mu) \cap \Sigma(\mu)^T \iff 
    \mu^{Ta} = \mu^{T} \iff 
    \forall x \in V \colon \sigma(x,Ta) = \sigma(x,T) \iff
    \forall x \in V  \colon \sigma(x^a,T) = \sigma(x,T)^a.
    \end{equation*}

  \end{enumerate} 
\end{prop}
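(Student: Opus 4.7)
The argument for both parts rests on two basic mechanisms. The \emph{determinacy principle}: the permutation $\sigma(x,R)=\sigma(\mu,x,\mu^R)$ completely encodes the involution $\mu^R(x,-)$ once $\mu$ is fixed, since the defining equation $y^{\sigma(x,R)}=\mu(x,\mu^R(x,y))$ inverts to $\mu^R(x,y)=\mu(x,y^{\sigma(x,R)})$ by applying $\mu(x,-)$ again. The \emph{equivariance principle}: a direct calculation from~(\ref{eq:transition}) and the rule $\mu'^R(u^R,v^R)=\mu'(u,v)^R$ yields
\[
\sigma(\mu^R,u^R,\mu'^R)=\sigma(\mu,u,\mu')^R \qquad (R\in\Sigma(V)).
\]

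For Part~(\ref{prop:transversal1}), the first equivalence $a\in\Sigma(\mu)\iff\mu^a=\mu$ is the definition of $\Sigma(\mu)$ as the isotropy subgroup of the action~(\ref{eq:STSxSigmaV}); the second $\mu^a=\mu\iff\mu^{aT}=\mu^T$ holds because right multiplication by the invertible $T$ is a bijection of $\mathrm{STS}(V)$; the third $\mu^{aT}=\mu^T\iff\forall x\colon\sigma(x,aT)=\sigma(x,T)$ follows from determinacy applied pointwise in $x$. For the fourth equivalence, which is the crux, I would first use equivariance to rewrite $\sigma(x,T)^a=\sigma(\mu^a,x^a,(\mu^T)^a)=\sigma(\mu^a,x^a,\mu^{Ta})$, so that condition~(v) becomes $\sigma(\mu,x^a,\mu^{T^a})=\sigma(\mu^a,x^a,\mu^{Ta})$ for all $x$. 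Granting $\mu^a=\mu$, one has $\mu^{T^a}=(\mu^{a^{-1}})^{Ta}=\mu^{Ta}=(\mu^T)^a$, and both sides collapse to $\sigma(\mu,x^a,\mu^{Ta})$; this gives $(i)\Rightarrow(v)$. For the converse I would unpack the equality via determinacy and the involution $\mu(u,-)$ to extract the functional identity $\mu^{T^a}(u,y)=\mu^{Ta}(u,y)^{\sigma(\mu,u,\mu^a)}$, and then combine it with the relation $\mu^{T^a}=(\mu^{a^{-1}})^{Ta}$ so that the common $Ta$-shift factors out symmetrically, leaving $\mu^{a^{-1}}=\mu$ and hence $a\in\Sigma(\mu)$.

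Part~(\ref{prop:transversal3}) proceeds by the same template under the standing hypothesis $a\in\Sigma(\mu)$. The first equivalence uses $\Sigma(\mu)^T=\Sigma(\mu^T)$ from~(\ref{eq:smucapsmuT}): since $a\in\Sigma(\mu)$ is already given, the remaining condition is $a\in\Sigma(\mu^T)$, i.e.\ $(\mu^T)^a=\mu^T$, equivalently $\mu^{Ta}=\mu^T$. Determinacy then gives $\mu^{Ta}=\mu^T\iff\forall x\colon\sigma(x,Ta)=\sigma(x,T)$. For the last equivalence, the hypothesis $\mu^a=\mu$ lets the equivariance rewrite $\sigma(x,T)^a=\sigma(\mu^a,x^a,(\mu^T)^a)=\sigma(\mu,x^a,\mu^{Ta})$, so that $\sigma(x^a,T)=\sigma(x,T)^a$ for all $x$ is equivalent via determinacy to $\mu^T(x^a,-)=\mu^{Ta}(x^a,-)$ for all $x$, hence to $\mu^T=\mu^{Ta}$.

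I expect the main obstacle to be the converse $(v)\Rightarrow(i)$ in Part~(\ref{prop:transversal1}): extracting $\mu^a=\mu$ from the bare covariance $\sigma(x^a,T^a)=\sigma(x,T)^a$. The cleanest execution will hinge on recognising that after equivariance both of the derived quasigroups $\mu^{T^a}$ and $(\mu^T)^a$ live in the same $Ta$-shifted orbit of $\mu$, so that this shift can be stripped and the remaining pointwise equation isolates the missing identity $\mu=\mu^{a^{-1}}$.
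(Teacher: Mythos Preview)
Your determinacy and equivariance principles are exactly the tools the paper uses, and your treatment of $(i)\Leftrightarrow(ii)\Leftrightarrow(iii)\Leftrightarrow(iv)$, of $(i)\Rightarrow(v)$, and of all of Part~(2) matches the paper's argument essentially verbatim.

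The gap you anticipated in $(v)\Rightarrow(i)$ of Part~(1) is real, and your sketch does not close it. The intermediate identity $\mu^{T^a}(u,y)=\mu^{Ta}(u,y)^{\sigma(\mu,u,\mu^a)}$ is correct, but the transition $\sigma(\mu,u,\mu^a)$ depends on $u$, so the ``common $Ta$-shift'' cannot be stripped uniformly to leave $\mu^{a^{-1}}=\mu$. Concretely, take $T=\mathrm{Id}$: then $\sigma(x,T)=\sigma(\mu,x,\mu)=\mathrm{Id}$ for every $x$ by the quasigroup axiom $\mu(x,\mu(x,y))=y$, and likewise $\sigma(x^a,T^a)=\sigma(x^a,\mathrm{Id})=\mathrm{Id}$, so condition~(v) holds for \emph{every} $a\in\Sigma(V)$ while $(i)$ plainly need not. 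Hence no argument for $(v)\Rightarrow(i)$ can succeed without some restriction on~$T$. The paper's own proof asserts at this step that~(v) is equivalent to the pointwise condition $\mu^{T^a}(x^a,y^a)=\mu^T(x,y)^a$, but the same example shows this equivalence also fails in that direction; for the applications that follow (showing that $aT$, $Ta$, $T^a$ are transversal iff $T$ is, once $a\in\Sigma(\mu)$ is given) only the forward implication $(i)\Rightarrow(v)$ is actually used, and that direction your plan handles correctly.
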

\begin{proof}
  \noindent \eqref{prop:transversal1} The first part is clear since
  $\Sigma(\mu)$ is the isotropy subgroup at $\mu$ under the right
  action \eqref{eq:STSxSigmaV} of $\Sigma(V)$ on $\mathrm{STS}(V)$.
  The condition $\sigma(x^a,T^a) = \sigma(x,T)^a$ for all $x \in V$ is
  equivalent to $\mu^{T^a}(x^a,y^a) = \mu^T(x,y)^a$ or
  $\mu^a(x^{T^{-1}},y^{T^{-1}}) = \mu(x^{T^{-1}},y^{T^{-1}})$ for all
  $x,y \in V$.

  \noindent \eqref{prop:transversal3} 
  Clearly, $a \in \Sigma(\mu^T) \iff \mu^{Ta}
  = \mu^T$, as $\Sigma(\mu^T)$ is the isotropy subgroup at $\mu^T$. 
  Next
  \begin{equation*}
    \mu^{Ta} = \mu^T \iff 
    \forall x,y \in V \colon \mu(x,\mu^{Ta}(x,y)) = \mu(x,\mu^T(x,y))
    \iff \\
    \forall x \in V \colon \sigma(x,T) = \sigma(x,Ta)
  \end{equation*}
  establishes the second equivalence. Since $a \in \Sigma(\mu)$ we
  have, as in \eqref{prop:transversal1}, that $\mu^T(x,y)^a =
  \mu^{Ta}(x^a,y^a)$ for all $x,y \in V$. This leads to a new chain a
  equivalences,
  \begin{multline*}
    \forall x \in V \colon \sigma(x^a,T) = \sigma(x,T)^a \iff
    \forall x,y \in V \colon 
    \mu(x^a,\mu^T(x^a,y^a)) = \mu(x,\mu^T(x,y))^a \iff \\
    \forall x,y \in V \colon 
    \mu(x^a,\mu^T(x^a,y^a)) = \mu(x^a,\mu^{Ta}(x^a,y^a)) \iff 
    \mu^{Ta} = \mu^T,
  \end{multline*}
   proving the final equivalence.
\end{proof}

\begin{rmk}
  Suppose that $a \in \Sigma(\mu)$, $T \in \Sigma(V)$ is transversal
  to $\mu$, and that $\sigma(x_0^a,T) = \sigma(x_0,T)^a$ for some $x_0
  \in V$. Let $y_0 \neq x_0$ be a point of $V$ distinct from $x_0$.
  For any $j \geq 0$,
  \begin{equation*}
     y_0^{\sigma(x_0,T)^j} \xrightarrow{a}  
     Y_0^{\sigma(X_0,T)^j}, \qquad
     \mu(x_0,y_0)^{\sigma(x_0,T)^j} \xrightarrow{a}
     \mu(X_0,Y_0)^{\sigma(X_0,T)^j}.
  \end{equation*}
  Thus $a$ is completely determined by its values, $X_0=x_0^a$ and
  $Y_0=y_0^a$, on the two points $x_0$ and $y_0$. (The points $y_0$
  and $\mu(x_0,y_0)$ are not in the same orbit of $\sigma(x_0,T)$.)
  Therefore the order of $\Sigma(\mu) \cap \Sigma(\mu)^T$ is at most
  $n(n-1)$.
\end{rmk}

   Let $a \in \Sigma(\mu)$ be a symmetry of
   $\mu$. Proposition~\ref{prop:transversal}.\eqref{prop:transversal1}
   shows that 
     \begin{multline*}
     \text{$T$ is (orientably) transversal to $\mu$} \iff
     \text{$aT$ is  (orientably) transversal to $\mu$}  \iff \\
     \text{$T^a$ is  (orientably) transversal to $\mu$}  \iff
     \text{$Ta$ is  (orientably) transversal to $\mu$}.
      \end{multline*}
      Note that if $\sigma^+(x)$ is an orientation for $(\mu,\mu^T)$
      then $\sigma^+(x^{a^{-1}})^a$ is an orientation of
      $(\mu,\mu^{T^a})$.  Thus transversals to the Steiner quasigroup
      $\mu$ are really elements of $\Sigma(\mu) \backslash \Sigma(V) /
      \Sigma(\mu)$.

\begin{defn}\label{defn:transmu}
  For any Steiner quasigroup $\mu$ on $V$,
  \begin{equation*}
    \mathrm{T}^{(+)}(\mu) = 
    \{ \Sigma(\mu)T\Sigma(\mu) \in 
    \Sigma(\mu) \backslash \Sigma(V) /\Sigma(\mu) \mid
    \text{$T$ is (orientably) transversal to $\mu$} \} 
  \end{equation*}
  denotes the set of equivalence classes of (orientable) transversals
  to $\mu$. For any subgroup $A \leq \Sigma(\mu)$,
  \begin{equation*}
     \mathrm{T}^{(+)}(\mu)_{\gtrsim A} = \{ T \in T^{(+)}(\mu) \mid
     \Sigma(\mu) \cap \Sigma(\mu)^T \gtrsim A \}
  \end{equation*}
  is the set of (orientable) transversals $T$ for which $\Sigma(\mu)
  \cap \Sigma(\mu)^T$ is superconjugate in $\Sigma(\mu)$ to $A$.
\end{defn}

      The next Proposition~\ref{prop:orbitcat} deals with sets of
      double cosets of the form $H \backslash G \slash H$.  The
      following notation will be used: When $G$ is a group and $A \leq
      H \leq G$ and $K \leq G$ are subgroups of $G$ then
\begin{itemize}
\item $K \gtrsim_G H$ means that $K$ is superconjugate in $G$ to
  $H$ (a $G$-conjugate of $K$ is a supergroup of $H$),
\item $N_G(H,K) =
\{ g \in G \mid H^g \leq K\}$ is the transporter set,
\item $A^{H} = \{A^h \mid h \in H\}$ is the set of
  $H$-conjugates of $A$,
\item $A^G_{\leq H}$ is the set of $G$-conjugates of $A$ contained in $H$.
\end{itemize}

\begin{prop} \label{prop:orbitcat}
  Let $H$ be a subgroup of $G$ and $A$ a subgroup of $H$.
  \begin{enumerate}
  \item \label{prop:orbitcat1} The map
    \begin{equation*}
      N_G(A,H) \xrightarrow{g \to Hg^{-1}H} 
      \{ g \in H \backslash G \slash H \mid H \cap H^g \gtrsim A \}
    \end{equation*}
    is surjective.
  \item \label{prop:orbitcat3} Let $g_1,\ldots,g_t \in N_G(A)
    \backslash G \slash H$ be distinct elements such that $A^G_{\leq H}
    = \{A^{g_1},\ldots,A^{g_t} \}$. Then
    \begin{equation*}
      N_G(A,H) = \coprod_{1 \leq j \leq t} g_jN_G(A^{g_j})H.
    \end{equation*}
  \item \label{prop:orbitcat4} If $A^G_{\leq H} \subseteq A^H$ (i.e.,
    every $G$-conjugate of $A$ in $H$ is $H$-conjugate to $A$) then
    \begin{equation*}
      N_H(A) \backslash N_G(A) \slash N_H(A) 
      \xrightarrow{N_H(A)gN_H(A) \to HgH} 
       \{ g \in H \backslash G \slash H \mid H \cap H^g \gtrsim A \}
    \end{equation*}
    is surjective and it is bijective if also $N_G(A) \leq N_G(H)$.
  \end{enumerate}
\end{prop}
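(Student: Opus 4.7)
The plan is to treat the three parts sequentially; the unifying tool is the identity $A^g \leq H \Leftrightarrow A \leq H^{g^{-1}}$, which translates the transporter description $N_G(A,H)$ into the superconjugation condition $H \cap H^g \gtrsim A$ appearing on the right-hand side. I will also use that for any $g' = h_1 g h_2 \in HgH$, $H \cap H^{g'} = (H \cap H^g)^{h_2}$, so the property $H \cap H^g \gtrsim A$ depends only on the double coset $HgH$.

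For surjectivity in (1), given $HgH$ with $H \cap H^g \gtrsim A$, I would pick $h_0 \in H$ with $A^{h_0} \leq H \cap H^g$; then $A^{h_0 g^{-1}} \leq H$, so $h_0 g^{-1} \in N_G(A,H)$ maps to $H g h_0^{-1} H = H g H$. For (2), I would assign to $g \in N_G(A,H)$ the unique index $j$ with $A^g = A^{g_j}$, which is well-defined because distinct double cosets $N_G(A) g_j H$ force distinct conjugates $A^{g_j}$. Then $g g_j^{-1} \in N_G(A)$ rewrites as $g \in g_j \cdot g_j^{-1} N_G(A) g_j = g_j N_G(A^{g_j}) \subseteq g_j N_G(A^{g_j}) H$; conversely, $A^{g_j n h} = (A^{g_j})^h \leq H$ gives the opposite inclusion. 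For disjointness, if $g_j n_j h_j = g_k n_k h_k$ then $(A^{g_j})^{h_j} = (A^{g_k})^{h_k}$, hence $A^{g_j} = A^{g_k h_k h_j^{-1}}$, forcing $g_j \in N_G(A) g_k H$ and contradicting the distinctness of the $g_j$ in $N_G(A) \backslash G / H$ when $j \neq k$.

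For (3), surjectivity combines (1) with the standing hypothesis: given $HgH$ with $H \cap H^g \gtrsim A$, Part (1) yields $g' \in N_G(A,H)$ with $H g'^{-1} H = H g H$, so $A^{g'} \in A^G_{\leq H} \subseteq A^H$ produces some $h \in H$ with $A^{g'} = A^h$, whence $g' h^{-1} \in N_G(A)$ and $H g H = H n H$ with $n = (g' h^{-1})^{-1} \in N_G(A)$. For injectivity under the extra hypothesis $N_G(A) \leq N_G(H)$, suppose $n_1, n_2 \in N_G(A)$ satisfy $H n_1 H = H n_2 H$, so $n_1 = h_1 n_2 h_2$ for some $h_1, h_2 \in H$. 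Then $n_1 n_2^{-1} = h_1 \cdot (n_2 h_2 n_2^{-1})$, and $n_2 \in N_G(H)$ forces $n_2 h_2 n_2^{-1} \in H$; thus $n_1 n_2^{-1} \in H \cap N_G(A) = N_H(A)$, giving $N_H(A) n_1 N_H(A) = N_H(A) n_2 N_H(A)$.

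The main bookkeeping obstacle is keeping straight the three distinct double-coset indexing sets $N_G(A) \backslash G / H$, $H \backslash G / H$, and $N_H(A) \backslash N_G(A) / N_H(A)$, and recognizing that the normalizer hypothesis $N_G(A) \leq N_G(H)$ in (3) is precisely what is needed to force $n_1 n_2^{-1} \in H$ in the injectivity step; every other step reduces to a direct manipulation of $A^g \leq H \Leftrightarrow A \leq H^{g^{-1}}$ together with the observation that the target condition is well-defined on double cosets.
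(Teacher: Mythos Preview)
Your proof is correct and follows essentially the same route as the paper's: the surjectivity in (1) via picking $h_0$ with $A^{h_0}\le H^g$, the decomposition in (2) via locating $A^g$ among the $A^{g_j}$, and the injectivity in (3) via the normalizer hypothesis forcing $n_1n_2^{-1}\in H$. The paper derives surjectivity in (3) by specializing (2) to $t=1$, $g_1=e$ (so $N_G(A,H)=N_G(A)H$), whereas you redo that step directly; and in (2) the paper reads the hypothesis as ``the $A^{g_j}$ represent the $H$-conjugacy classes in $A^G_{\le H}$'' (so one gets $A^g=A^{g_jh}$ for some $h\in H$ rather than $A^g=A^{g_j}$ exactly), but your argument adapts verbatim to that reading and your disjointness check is slightly more explicit than the paper's.
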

\begin{proof}
  \noindent \eqref{prop:orbitcat1} If $g \in N_G(A,H)$ then $A^g \leq H$
  or $H^{g^{-1}} \geq H$. Thus the map $g \to g^{-1}$ takes the set on
  the left to the set on the right. This map is surjective because
  \begin{equation*}
    H^{g} \gtrsim A \iff
    \exists h \in H \colon H^{g} \geq A^{h} \iff
    \exists h \in H \colon A^{hg^{-1}} \leq H \iff
    \exists h \in H \colon hg^{-1} \in N_G(A,H)
  \end{equation*}
  and the image of $hg^{-1}$ in $H \backslash G \slash H$ is $HgH$.

  \noindent \eqref{prop:orbitcat3} Let $g \in N_G(A,H)$. Since $A^g
  \leq H$ is conjugate in $H$ to $A^{g_j}$ for some $j$, $A^{g_j} =
  A^{gh}$ for some $h \in H$. Then $ghg_j^{-1}$ normalizes $A$, $gh
  \in N_G(A)g_j$ and $g = (gh)h^{-1} \in N_G(A)g_jH =
  g_jN_G(A^{g_j})H$. 

  \noindent \eqref{prop:orbitcat4} Since $A^G_{\leq H} = A^H$,
  $N_G(A,H) = N_G(A)H$ according to \eqref{prop:orbitcat3} and the map
  \begin{equation*}
    N_G(A) \xrightarrow{g \to HgH} 
     \{ g \in H \backslash G \slash H \mid H \cap H^g \gtrsim A \}
  \end{equation*}
  is surjective by \eqref{prop:orbitcat1}. Suppose that $g_1,g_2 \in
  N_G(A)$ have identical images in $H \backslash G \slash H$. Then
  $h_1g_1 = g_2h_2$ for some $h_1,h_2 \in H$. Thus $g_2^{-1}g_1 =
  g_2^{-1}h_1^{-1}g_2h_2 = (h_1^{-1})^{g_2}h_2 \in H$ since $H^{g_2} =
  H$ as all elements of $G$ normalizing $A$ normalize $H$ by
  assumption. Thus $g_2^{-1}g_1 \in N_H(A)$ and $g_1N_H(A) =
  g_2N_H(A)$. 
\end{proof}

We conclude from
Proposition~\ref{prop:orbitcat}.\eqref{prop:orbitcat4} that when $A
\leq \Sigma(\mu)$ has the property that any $\Sigma(V)$-conjugate of
$A$ in $\Sigma(\mu)$ is $\Sigma(\mu)$-conjugate to $A$ (i.e., if $A$ is a
Sylow $p$-subgroup of $\Sigma(\mu)$ for some prime $p$) then there is
a surjection
 \begin{equation}
   \label{eq:surjection}
   N_{\Sigma(\mu)}(A) \backslash
   \{ T \in N_{\Sigma(V)}(A) \mid 
   \text{$T$ is transversal to $\mu$} \} \slash N_{\Sigma(\mu)}(A)
   \twoheadrightarrow \mathrm{T}(\mu)_{\gtrsim A}
 \end{equation}
 that is bijective if also $N_{\Sigma(V)}(A) \leq
 N_{\Sigma(V)}(\Sigma(\mu))$.

Let $S \in \mathrm{STS}(V)$ be  a Steiner triple system on $V$ and $T
 \in \Sigma(V)$ a transversal to $S$. The $1$-chromatic number of the
 Steiner surface $S \cup S^T$ is $\chs 1{S \cup S^T} = |V| = n$ as all
 Steiner surface are neighborly. But what about the $2$-chromatic
 number? We discuss this question for two families of Steiner triple
 systems, the Bose Steiner triple systems and the projective Steiner
 triple systems.

 \section{The Bose Steiner triple systems --- an infinite series of neighborly surfaces with $\chi_2=3$}
\label{sec:bose}

We show in this section that Bose Steiner triple systems generate an infinite sequence 
of neighborly triangulated surfaces with unbounded $1$-chromatic numbers,
but with constant $2$-chromatic number $\chi_2=3$.

\begin{figure}[t]
  \centering
  
\begin{tikzpicture}[vertex/.style= {shape=circle,  
   fill={#1!100}, minimum size =
  6pt, inner sep =0pt,draw}, vertex/.default=black, xscale=1.5]

  \draw (1,1) --  (1,2) -- (1,3);
  \draw (2,1) -- (3,1) -- (5,2);
  \draw (2,2) -- (3,2) -- (5,3);
  \draw (5,1) -- (2,3) -- (3,3) ;
  
  \foreach \x in {1,...,5} {
    \foreach \y/\c in {1/red,2/blue,3/green} {
      \node[vertex=\c] at (\x,\y) {};};};
  
   \node[below=5pt] at (2,1) {$x_1$};
   \node[below=5pt] at (3,1) {$x_2$};
   \node[below=5pt] at (5,1) {$\frac{1}{2}(x_1+x_2)$};

   \node[left=5pt] at (1,1) {$0$};
   \node[left=5pt] at (1,2) {$1$};
   \node[left=5pt] at (1,3) {$2$};

\end{tikzpicture}
  \caption{The Bose Steiner triple systems $B(s)$ are $(3,2)$-colorable.}
  \label{fig:bosecol}
\end{figure}

Let $s>1$. In the ring $\Z/(2s+1)\Z$ of integers modulo the odd integer $2s+1$, 
$2$ is invertible with $2^{-1} = s+1$.  Let $V=\Z/(2s+1)\Z \times \Z/3\Z$.  
Then $V$ has order $n= |V| = 3(2s+1)$ and $n \equiv 3 \bmod 6$. Let also $m=3s+1$ so that $n=2m+1$.
The {\em Bose Steiner triple system\/} on $V$ \cite{bose39},
  \begin{eqnarray*}
    B(s) & = & \{ \{x\} \times \Z/3\Z \mid x \in \Z/(2s+1)\Z  \} \\
            &   & \cup\,  \{ \{(x_1,y),(x_2,y), ((x_1+x_2)/2,y+1)\} \mid x_1,x_2 \in  \Z/(2s+1)\Z,  y \in \Z/3\Z, x_1 \neq x_2 \},
  \end{eqnarray*}
  consists of $2s+1$ `vertical' and $3 \binom{2s+1}2 = 3s(2s+1)$ `slanted'
  triangles.
  Because
  \begin{equation*}
    n \bmod 12  =
    \begin{cases}
      3 \bmod 12, & \text{$s$ even}, \\
      9 \bmod 12, & \text{$s$ odd} 
    \end{cases}
  \end{equation*}
  Bose systems $B(s)$ with odd $s$ admit nonorientable
  transversals only, but for even $s$ orientable transversals may exist.

\enlargethispage*{1.5mm}  
  
  The following proposition and theorem, nearly all of which was proved in
  \cite{grannell1998}, contain an example of an \emph{orientable}
  transversal to $B(s)$. The proof presented here does not use results
  from topological graph theory so may represent a partial response to
  \cite[Problem 1]{grannell1998}.

  \begin{prop}\label{prop:bose}\cite[Theorem 1]{grannell1998}.
    Assume that $s \geq 2$ is even. The permutation
 \begin{equation*}
    (x,y)^{T} =
    \begin{cases}
      (x,0), & y=0, \\ (x+1,2), & y =1, \\ (x+1,1), & y =2
    \end{cases}
  \end{equation*}
  is orientably transversal to $B(s)$. 
  \end{prop}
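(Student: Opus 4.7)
The plan is to verify the two criteria of Proposition~\ref{prop:sigmaT}: for every $x\in V$ the shift $\sigma(x,T)=\sigma(\mu,x,\mu^T)$ should have cycle structure $1^1m^2$ with $m=3s+1$, and local $m$-orbits $\sigma^+(x,T)$ should be selectable coherently. My first move is a symmetry reduction. The horizontal translation $\tau\colon(x,y)\to(x+1,y)$ preserves both the vertical and the slanted triples of $B(s)$, and a direct check on each of the three rows shows $\tau T=T\tau$. Hence $\tau\in\Sigma(\mu)\cap\Sigma(\mu)^T$, and Lemma~\ref{lemma:shiftA} applied with $A=\gen{\tau}$ reduces the cycle-structure check to the three representatives $V_A=\{(0,0),(0,1),(0,2)\}$.

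For each representative I would compute $\sigma((0,y_0),T)$ explicitly by unwinding $\mu^T(u,v)=\mu(u^{T^{-1}},v^{T^{-1}})^T$ together with the piecewise definition of the Bose quasigroup $\mu$. On $x=(0,0)$, for example, one finds that the shift sends $(a,0)\mapsto(-(s+1)a-1,2)$ for $a\neq 0,2s-1$, sends $(a,1)\mapsto(4-2a,0)$ for $a\neq 1,2$, and sends $(a,2)\mapsto(a-1,1)$ for $a\neq 1$; the exceptional values are handled one at a time because an intermediate product collapses into a vertical triple. Following a single orbit, each block of three steps cycles through the rows $0\to 2\to 1\to 0$ while advancing the $\Z/(2s+1)\Z$-coordinate by a fixed increment coprime to $2s+1$ (this is where the evenness of $s$ first enters), so the orbit closes up after exactly $m=3s+1$ steps and the remaining $m$ non-fixed vertices form the second orbit. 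The computations for $y_0=1$ and $y_0=2$ run in parallel and yield the same cycle structure, so $T$ is transversal to $\mu$.

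For orientability I would apply Lemma~\ref{lemma:LAmu} with the same $A=\gen{\tau}$, choosing the vertical rotation $\rho\colon(x,y)\to(x,y+1)$ as $s^+$: it is fixed-point-free and centralizes $\tau$. The candidate orientation $\sigma^+(x,T)=\rho(x)^{\gen{\sigma(x,T)}}$ will be coherent, by the lemma, once the displayed biconditional is verified for each of the three $y\in V_A$ and every $x\in V$; since the two $m$-orbits of each $\sigma(x,T)$ have been identified explicitly in the previous step, this reduces to a finite case check. This is the main obstacle: the cycle count identifies orbit sizes but not the partition of $V\setminus\{x\}$ between the two orbits, so the verification needs a bookkeeping invariant --- most naturally a sign depending linearly on the coordinates --- that is constant on each $m$-orbit of $\sigma(x,T)$ and whose value at $\rho(\mu^\pm(x,y))$ can be read off directly. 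The hypothesis that $s$ is even should enter here decisively, since for odd $s$ one has $n\equiv 9\bmod 12$ and no orientable transversal to $B(s)$ exists at all, so whatever invariant works must necessarily break down in the odd-$s$ case.
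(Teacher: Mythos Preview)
Your strategy matches the paper's: reduce via the horizontal translation $\tau$ to the three base points $(0,0),(0,1),(0,2)$ (Lemma~\ref{lemma:shiftA}), write out the shift $\sigma((0,y_0),T)$ in closed form, and for orientability invoke Lemma~\ref{lemma:LAmu} with the vertical rotation $s^+(x,y)=(x,y+1)$. The closed-form formulas you quote for $\sigma((0,0),T)$ agree with the paper's, and the orientability scheme is exactly what the paper carries out (stated in Theorem~\ref{thm:orientablebosesteiner}).

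There is, however, a real gap in your orbit-length deduction. Three generic steps of $\sigma((0,0),T)$ do return to the same row and advance the $\Z/(2s+1)\Z$-coordinate by a fixed amount ($+8$ from row~$0$, $-4$ from rows~$1$ and~$2$), but these increments are coprime to the odd modulus $2s+1$ for \emph{every} $s$, so this is not where evenness enters. More seriously, coprimality alone would predict a single long cycle through all of $V\setminus\{(0,0)\}$, not two $(3s+1)$-cycles: an increment coprime to $2s+1$ makes the row-$1$ coordinate sweep through all $2s+1$ residues, which is twice too many. What actually produces the cycle type $1^1m^2$ is the handful of exceptional transitions---at $(-2,0)$, $(1,1)$, $(2,1)$, $(1,2)$ for $\sigma((0,0),T)$---where a vertical triple appears in the intermediate product; these act as switches that cut the would-be long cycle into two equal halves. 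The paper traces this explicitly: the orbit through $(0,1)$ runs $3s/2$ generic steps to the exception $(1,1)$, jumps to $(2s,2)$, runs $3s/2-3$ more generic steps, and then closes via three further exceptional moves. It is the appearance of $3s/2$ in this count, not the coprimality of the three-step increment, that forces $s$ to be even. Your sketch acknowledges the exceptions but does not actually use them; the orbit-length claim needs this bookkeeping made explicit.
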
  

  \begin{proof}
    Here, we give an alternative proof to \cite{grannell1998}.
    By Proposition~\ref{prop:sigmaT} we must determine the cycle
    structure of the shifts $\sigma(u,T)$ defined by equation
    \eqref{eq:sigmaT} for all $u \in V$. However, $\Sigma(\mu) \cap
    \Sigma(\mu)^T$ contains the subgroup $\Z/(2s+1)\Z$ so by
    Lemma~\ref{lemma:shiftA} it suffices to do this for the three $u$
    in $V_{\Z/(2s+1)\Z} = \{0\} \times \Z/3\Z$.

  The formulas 
  \begin{equation*}
    \mu((0,0),(x,y)) =
    \begin{cases}
      (\frac{1}{2}x,1), & y=0, x \neq 0, \\
      (2x,0), & y=1, x \neq 0, \\
      (-x,2), & y=2, x \neq 0,
    \end{cases} \qquad
     \mu^T((0,0),(x,y)) =
     \begin{cases}
       (\frac{1}{2}x-1,2), & y=0, x \neq 0, \\
       (-x+2,1), & y=1, x \neq 1, \\
       (2x-2,0), & y=2, x \neq 1
     \end{cases}
  \end{equation*}
 imply that
 \begin{equation*}
    (x,y)^{\sigma((0,0),T)} =
    \mu((0,0),\mu^T((0,0),(x,y))) =
    \begin{cases}
      (-\frac{1}{2}x-1,2), & y=0, x \neq 0, x \neq -2, \\
      (-2x+4,0), & y=1, x \neq 1, x \neq 2, \\
      (x-1,1), & y=2, x \neq 1.
    \end{cases}
 \end{equation*}
 This expression can be used to show that the shift $\sigma((0,0),T)$
 has cycle structure $1^1m^2$. The orbits through $(0,1)$ and
 $\mu((0,0),(0,1)) = (0,2)$,
 \begin{align*}
      &
   \overbrace{(0,1) \to \cdots \to (1,1)}^{3s/2} \to 
   \overbrace{(2s,2) \to \cdots \to (4,2)}^{3s/2-3} \to (3,1) \to
   (-2,0)  \to (0,1), \\
   &\underbrace{(0,2) \to  \cdots \to (1,2)}_{3s/2} 
   \to 
   \underbrace{(2,0) \to \cdots \to
   (5,0)}_{3s/2-3} \to (3,2) \to (2,1) \to (0,2),
 \end{align*}
 are disjoint and both have length $m$. It may be helpful to
 observe that $T^3(x,0)=(x+8,0)$, $T^3(x,1)=(x-4,1)$,
 $T^3(x,2)=(x-4,2)$ provided $T^3$ does not involve any of the
 exceptions $(0,0),(-2,0),(1,1),(2,1),(1,2)$.  

 Completely analogous arguments show that also $\sigma((0,1),T)$ and
 $\sigma((0,2),T)$ have cycle structure $1^1m^2$. We have now shown
 that $T$ is a transversal to $B(s)$.
 \end{proof}

\begin{thm}\label{thm:orientablebosesteiner}
 The orientable combinatorial Steiner
  surface $B(s) \cup B(s)^T$ has genus $\frac{1}{2}s(6s-1)$ and
  chromatic numbers $\chs 1{B(s) \cup B(s)^T} = 3(2s+1)$, $\chs 2{B(s)
    \cup B(s)^T} = 3$. The horizontal shift $h(x,y)=(x+1,y)$ is an
  orientation preserving auto\m\ of $B(s) \cup B(s)^T$.
  \end{thm}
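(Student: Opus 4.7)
The plan is to establish the four assertions in order, building on Proposition~\ref{prop:bose} which already gives the orientable transversality of $T$.

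For the genus, substitute $n=3(2s+1)=6s+3$ into the formula $\tfrac{1}{12}(n-3)(n-4)$ for the genus of an orientable Steiner surface (stated right after Definition~\ref{defn:steinertria}) to obtain $\tfrac{1}{12}\cdot 6s\cdot(6s-1)=\tfrac{1}{2}s(6s-1)$. The $1$-chromatic number then follows immediately from $1$-neighborliness of Steiner surfaces: any two distinct vertices form an edge and hence require distinct colors, giving $\chi_1=n=3(2s+1)$.

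For the $2$-chromatic number, I propose the coloring $c(x,y)=y\in\Z/3\Z$ suggested by Figure~\ref{fig:bosecol}. Vertical triangles of $B(s)$ receive all three colors, and slanted triangles $\{(x_1,y),(x_2,y),(\tfrac{1}{2}(x_1+x_2),y+1)\}$ receive colors $y,y,y+1$; neither pattern is monochromatic. Since $T$ induces on the second coordinate the transposition swapping $1$ and $2$ while fixing $0$, the triangles of $B(s)^T$ have color patterns of exactly the same shape (either $\{0,1,2\}$ or $\{y,y,y\pm 1\}$), so $c$ is a valid $(3,2)$-coloring of $B(s)\cup B(s)^T$. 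For the lower bound $\chi_2\geq 3$, observe that $B(s)\cup B(s)^T$ has positive genus $\tfrac{1}{2}s(6s-1)\geq 11$ for $s\geq 2$ and is therefore not $S^2$, so the K\"undgen--Ramamurthi theorem~\cite{kundgen_ramamurthi02} applies.

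For the automorphism $h$, direct inspection shows $h\in\Sigma(B(s))$: vertical triangles map to vertical triangles, and slanted triangles are preserved because $\tfrac{1}{2}((x_1+1)+(x_2+1))=\tfrac{1}{2}(x_1+x_2)+1$. A short computation yields $hT=Th$, so $T^h=T$, and Proposition~\ref{prop:transversal}.\eqref{prop:transversal3} then gives $h\in\Sigma(B(s))\cap\Sigma(B(s))^T$; hence $h$ is an automorphism of the combinatorial surface. For orientation preservation, the strategy is to construct the orientation itself in an $h$-equivariant way and obtain preservation for free: apply Lemma~\ref{lemma:LAmu} with $A=\gen{h}\cong\Z/(2s+1)\Z$ (which lies in $\Sigma(B(s))\cap\Sigma(B(s))^T$) and the $A$-centralizing fixed-point-free map $s^+(x_1,x_2)=(x_1,x_2+1)$, giving the orientation $\sigma^+(x)=s^+(x)^{\gen{\sigma(x,T)}}$. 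Since both $s^+$ and $\sigma(x,T)$ respect $x$-translation, this $\sigma^+$ is manifestly $h$-equivariant, so $h$ preserves the resulting orientation automatically.

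The main obstacle, I expect, is verifying that $\sigma^+$ really satisfies the consistency axiom of Definition~\ref{defn:steinertriaorient}. Lemma~\ref{lemma:LAmu} reduces this to checking the biconditional at the three $A$-transversal vertices $(0,0),(0,1),(0,2)$, where the explicit orbit descriptions from the proof of Proposition~\ref{prop:bose} can be plugged in and compared directly.
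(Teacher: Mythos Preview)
Your proposal is correct and follows essentially the same route as the paper: the same projection coloring $c(x,y)=y$, the same orientation via Lemma~\ref{lemma:LAmu} with $A=\gen{h}$ and the vertical shift $s^+=v$, and the same argument that $h$ preserves this orientation by equivariance. The one small divergence is your lower bound $\chi_2\geq 3$: you invoke K\"undgen--Ramamurthi for surfaces of positive genus, whereas the paper cites Rosa's result that any Steiner triple system with more than one triple already has $\chi_2\geq 3$; both work, and Rosa's is slightly more direct since it applies to the subcomplex $B(s)$ itself without appealing to the topology of the union.
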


  \begin{proof}
 If we let $v(u)=u+(0,1)$ denote the vertical and $h(u)=u+(1,0)$ the
 horizontal shift of $V$, then $V \ni u \to v(u)^{\gen{\sigma(u,T)}}$
 is an orientation for $B(s) \cup B(s)^T$ (Lemma~\ref{lemma:LAmu}) and
 $h \in \Sigma(B(s)) \cap \Sigma(B(s))^T$ is an orientation preserving
 auto\m\ of $B(s) \cup B(s)^T$.  

 The projection $\Z/(2s+1)\Z \times \Z/3\Z \to \Z/3\Z$ is a
 $(3,2)$-coloring of $B(s) \cup B(s)^T$ which has no horizontal
 triangles (Figure~\ref{fig:bosecol}). 
 On the other hand, all Steiner
 triple system with more than one triple have $2$-chromatic number at
 least $3$ \cite{rosa70}.
  \end{proof}

\begin{rmk}
Figure~\ref{fig:bosecol} illustrates the case $s=2$. Modulo $2s+1$ we have $(1+2)/2 \equiv 4$. 
The first coordinates of triangles in the Bose system are either constant, as in the vertical triangle on the left, 
or, if they are non-vertical, they have first coordinates $(x_1,x_2,(x_1+x_2)/2)$.
The figure shows the three nonvertical triangles with first coordinates $x_1=1$, $x_2=2$, and hence the third first
coordinate is $4$ (never $3$).
\end{rmk}

  For the sake of completeness, we quote the following result from
  \cite{soloveva2007} exhibiting a {\em nonorientable\/} transversal
  to $B(s)$ for any $s \geq 1$. 

\begin{prop}\cite[Theorem 3]{soloveva2007}
  Assume that $s \geq 1$. The permutation
  \begin{equation*}
    (x,y)^T =
    \begin{cases}
      (x,0), & y=0, \\
      (x+1,1), & y=1, \\
      (x+2,2), & y=2
    \end{cases}
  \end{equation*}
  is nonorientably transversal to $B(s)$. 
\end{prop}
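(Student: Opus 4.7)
The structure of the proof mirrors that of Proposition~\ref{prop:bose}: I need to verify that $T$ is a transversal to $B(s)$ and then to verify that the resulting Steiner surface is nonorientable.

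For transversality, I would appeal to Proposition~\ref{prop:sigmaT} and show that $\sigma(u,T)$ has cycle structure $1^1m^2$ (with $m=3s+1$) for every $u \in V$. Since the new $T$ commutes with the horizontal shift action of $\Z/(2s+1)\Z$ (its effect on the first coordinate is translation-invariant within each layer), this subgroup is contained in $\Sigma(\mu) \cap \Sigma(\mu)^T$, and Lemma~\ref{lemma:shiftA} reduces the check to $u \in V_A = \{0\} \times \Z/3\Z$. I would then compute $\mu^T((0,j),(x,y))$ for $j = 0,1,2$ using the explicit Bose multiplication table from the proof of Proposition~\ref{prop:bose}, compose with $\mu((0,j),-)$, write the resulting permutations in closed form, and exhibit the required two orbits of length $m$ by iteration. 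Away from a small exceptional set, $T^3$ acts as a pure horizontal translation on each layer, which makes tracking the orbits straightforward.

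For the nonorientability, I would split on the parity of $s$. When $s$ is odd, $n = 3(2s+1) \equiv 9 \bmod 12$, so the Euler characteristic $\chi(B(s) \cup B(s)^T) = -n(n-7)/6$ given by equation~\eqref{eq:eulerSL} is odd, and an orientable surface is impossible. When $s$ is even, a direct obstruction is needed. My plan is to apply Lemma~\ref{lemma:LAmu} with $s^+$ the vertical shift $(x,y) \mapsto (x,y+1)$, which is fixed-point free and centralizes the horizontal $\Z/(2s+1)\Z$-action, and show that the equivalence
\begin{equation*}
  x \in s^+(\mu^+(x,y))^{\langle\sigma(\mu^-,x,\mu^+)\rangle}
  \iff
  x \in s^+(\mu^-(x,y))^{\langle\sigma(\mu^-,x,\mu^+)\rangle}
\end{equation*}
fails at some $y \in V_A$. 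If the vertical shift does not immediately produce an obstruction, the next candidate would be the horizontal shift; as a fallback I would use Lemma~\ref{lemma:propagate} to find two points $u_1,u_2 \in \sigma^+(v)$ at which the local orientations transported to a common vertex of their two $\sigma$-orbits disagree.

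The main obstacle is the nonorientability argument for even $s$. The structural distinction from Proposition~\ref{prop:bose} is already visible at the level of $T$: the present permutation preserves each horizontal layer $\Z/(2s+1)\Z \times \{y\}$ setwise, whereas the orientable $T$ of Proposition~\ref{prop:bose} swaps the layers $y=1$ and $y=2$. Consequently, the triangles of $B(s)^T$ meet those of $B(s)$ at each vertex in a different rotational pattern, and I expect the local orientations to propagate around a suitable horizontal cycle in the $y=0$ layer with a reversed sign, providing the required obstruction to orientability.
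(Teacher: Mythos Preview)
The paper does not actually prove this proposition; it quotes it from \cite{soloveva2007} and only remarks afterwards that ``one may use Lemma~\ref{lemma:propagate}'' to verify nonorientability. Your plan for transversality---reduce to $u\in\{0\}\times\Z/3\Z$ via Lemma~\ref{lemma:shiftA} and compute the shifts explicitly---is exactly the method of Proposition~\ref{prop:bose}, and your parity observation for odd $s$ is correct and already implicit in the paper (it notes that Bose systems with odd $s$ admit only nonorientable transversals).

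There is, however, a genuine logical gap in your plan for even $s$. Lemma~\ref{lemma:LAmu} tells you when a \emph{particular} candidate function $x\mapsto s^+(x)^{\langle\sigma(x)\rangle}$ is an orientation; if the equivalence fails for your chosen $s^+$, you have only shown that this one candidate is not an orientation, not that no orientation exists. Trying another $s^+$ (the horizontal shift) has the same defect. So Lemma~\ref{lemma:LAmu} is a tool for \emph{confirming} orientability, not for obstructing it. Your fallback---Lemma~\ref{lemma:propagate}---is the correct instrument and the one the paper points to: fix a single local orientation at one vertex and show that its propagation is inconsistent. You should go straight to that argument for even $s$ rather than routing through Lemma~\ref{lemma:LAmu}.
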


  One may use Lemma~\ref{lemma:propagate} to show that $B(s) \cup B(s)^T$ 
  is a nonorientable Steiner surface. The statements about its chromatic numbers 
  are proved in the same way as in the proof of Theorem~\ref{thm:orientablebosesteiner}.
   
\begin{thm}
  The nonorientable combinatorial Steiner surface $B(s) \cup B(s)^T$ has genus
  $s(6s-1)$ and chromatic numbers $\chs 1{B(s) \cup B(s)^T}=3(2s+1)$,
  $\chs 2{B(s) \cup B(s)^T}=3$.
\end{thm}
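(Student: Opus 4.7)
The plan mirrors the argument for the orientable Bose surface (Theorem~\ref{thm:orientablebosesteiner}) in three steps: verify that $T$ is transversal to $B(s)$, derive the genus, and establish the two chromatic numbers; along the way, a careful application of Lemma~\ref{lemma:propagate} yields nonorientability.

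\emph{Transversality and genus.} As in Proposition~\ref{prop:bose}, the horizontal shift $h(x,y)=(x+1,y)$ generates a copy of $\Z/(2s+1)\Z$ that preserves $B(s)$ and commutes with $T$, hence lies in $\Sigma(B(s))\cap\Sigma(B(s))^T$. Lemma~\ref{lemma:shiftA} therefore reduces the verification of the cycle structure required by Proposition~\ref{prop:sigmaT} to the three representatives $u=(0,y_0)$, $y_0\in\Z/3\Z$. For each such $u$ I would write out $\mu(u,\cdot)$ and $\mu^T(u,\cdot)$ from the defining triples of $B(s)$, compose to obtain $\sigma(u,T)$ as an explicit piecewise function of $(x,y)$, and trace its orbits; the branching on the three values of $y$ produces two cycles of length $m=3s+1$ together with the fixed point $u$. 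Formula~\eqref{eq:eulerSL} with $n=6s+3$ then gives $\chi_E=-(2s+1)(3s-2)$, so the nonorientable genus is $2-\chi_E=s(6s-1)$.

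\emph{Nonorientability via Lemma~\ref{lemma:propagate}.} Fix $v_0=(0,0)$ and either size-$m$ orbit $\sigma^+(v_0)$ of $\sigma(v_0,T)$; the task is to exhibit $u_1,u_2\in\sigma^+(v_0)$ together with a vertex $x$ in the joint orbit $v_0^{\langle\sigma(u_1,T)\rangle}\cap v_0^{\langle\sigma(u_2,T)\rangle}$ for which the induced orbits $u_1^{\langle\sigma(x,T)\rangle}$ and $u_2^{\langle\sigma(x,T)\rangle}$ differ. The structural source of the obstruction is that $T$ shifts the $y=1$ and $y=2$ slices by $+1$ and $+2$ respectively, so the monodromy around a loop crossing between them twists nontrivially. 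Concretely I would choose $u_1$ on the $y=1$ slice and $u_2$ on the $y=2$ slice (both in $\sigma^+(v_0)$), locate a common vertex $x$ in their $\sigma$-orbits through $v_0$, and verify by direct computation that the two induced local orientations at $x$ disagree; the calculation is uniform in $s$ once the shift formulas from the previous step are in hand.

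\emph{Chromatic numbers.} Neighborliness of any Steiner surface gives $\chi_1(B(s)\cup B(s)^T)=n=3(2s+1)$. For $\chi_2$, the projection $\pi\colon V\to\Z/3\Z$, $(x,y)\mapsto y$, is a $(3,2)$-coloring: since $T$ fixes the second coordinate, every triangle of $B(s)^T$ has the same multiset of $y$-values as a triangle of $B(s)$---the vertical triangles become rainbow $\{0,1,2\}$ and the slanted triangles use exactly two colors---so no triangle is monochromatic. Thus $\chi_2\le 3$, and the matching lower bound $\chi_2\ge 3$ is the result of Rosa~\cite{rosa70} used in Theorem~\ref{thm:orientablebosesteiner}. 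The main obstacle is the nonorientability step: Lemma~\ref{lemma:propagate} is a nontrivial combinatorial criterion, and producing an explicit inconsistent propagation requires careful orbit bookkeeping, whereas the transversality and chromatic computations are essentially mechanical.
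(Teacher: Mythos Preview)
Your proposal is correct and follows essentially the same route as the paper: the paper's proof is a two-sentence pointer to Lemma~\ref{lemma:propagate} for nonorientability and to the argument of Theorem~\ref{thm:orientablebosesteiner} for the chromatic numbers, and you have simply fleshed out exactly those references (together with a reproof of the transversality that the paper imports from \cite{soloveva2007}). The one place your write-up goes beyond the paper is in sketching how the contradiction in Lemma~\ref{lemma:propagate} should arise; the paper gives no such detail, so your remark that this step is the only nontrivial piece of bookkeeping is accurate.
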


  The auto\m\ group of $B(s)$ contains $(\Z/(2s+1)\Z \times \Z/3\Z) \rtimes
  (\Z/(2s+1)\Z)^\times$ and, in fact, equals this group except for
  $s=1$ or $s=4$ \cite[Theorem 3.1]{lovegrove:2003}.

\section{The projective Steiner triple systems --- a non-orientable surface with $\chi_2\in\{5,6\}$}
\label{sec:PGd2}

Let $d>2$, $N=2^d$ and $m=2^{d-1}-1$ so that $N-1=2m+1$.  Let
$\F_{N}$ be the field with $N$ elements and $\F_N^\times$ the set of
nonzero elements in $\F_N$.  The \emph{projective geometry Steiner triple system}
is the set
  \begin{equation*}
  \mathrm{PG}(N) = \{ \{a,b,c\}  
  \mid a,b,c \in \F_N^\times,\quad a \neq b, \quad a+b+c=0 \}    
  \end{equation*}
  of $3$-subsets of the $(N-1)$-set $\F_N^\times$.  The corresponding
  Steiner quasigroup is $\mu(a,b)=a+b$, $a \neq b$.  The auto\m\ group
  of $\mathrm{PG}(N)$ is the linear group $\Sigma(\mathrm{PG}(N)) =
  \GL d2$. The Singer cycle (multiplication by a primitive element of
  $\F_N$) is an auto\m\ of $\mathrm{PG}(N)$ acting transitively on the
  vertex set.  Because $2^4 \equiv 2^2 \bmod 12$ and $2^5 \equiv 2^3
  \bmod 12$,
  \begin{equation*}
     2^d  - 1  \bmod 12 = 
  \begin{cases}
    7 \bmod 12, & \text{$d$  odd,} \\
    3 \bmod 12, & \text{$d$  even,} \\
  \end{cases}
  \end{equation*}
  so that all projective Steiner triple system potentially allow
  orientable transversals.

  Let $\Sigma_0(\F_N)$ be the group of all permutations of $\F_N$
  fixing $0$. Including also $0$ in the domain, the shift-map
  \eqref{eq:sigmaT} becomes
  \begin{equation*}
    \sigma \colon \F_N \times \Sigma_0(\F_N) \to \Sigma_0(\F_N)
  \end{equation*}
  given by $y^{ \sigma(x,T)} = (y^{-T}+x^{-T})^T+x$ for all $x,y \in
  \F_N$ and $T \in \Sigma_0(\F_N)$.  
  These shifts have the following properties 
   \begin{itemize}
   \item $\sigma(x,\mathrm{Id}) = \mathrm{Id}$ for all $x \in \F_N$,
   \item $\sigma(0,T)=\mathrm{Id}$ for all $T \in \Sigma_0(\F_N)$,
   \item $\sigma(x,aT) = \sigma(x,T)$ and $\sigma(x^a,T^a) =
     \sigma(x,T)^a$ for all $a \in \GL d2$
     (Proposition~\ref{prop:transversal}.\eqref{prop:transversal1}),
   \item $\sigma(x,T)$ fixes $0$ and $x$ for all $T \in
     \Sigma_0(\F_N)$,
   \end{itemize}
   where $\mathrm{Id}$ is the identity permutation.

   Fix an element $z \in \F_N$ distinct from $0$ and $1$.  A
   permutation $T \in \Sigma_0(\F_N)$ is by
   Proposition~\ref{prop:sigmaT} transversal to $\mathrm{PG}(N)$ if
   and only if:
  \begin{enumerate}
  \item For all $x \in \F_N^\times$, the $x$-shift $\sigma(x,T)$
    partitions $\F_N - \{0,x\}$ into two equally sized orbits.
  \end{enumerate}
  It is orientably transversal to $\mathrm{PG}(N)$ if also
\begin{enumerate}
  \setcounter{enumi}{1}
 \item $\forall u_1,u_2 \in z^{\gen{\sigma(1,T)}} \forall x \in
    1^{\gen{\sigma(u_1,T)}} \cap 
    1^{\gen{\sigma(u_2,T)}} \colon u_1^{\gen{\sigma(x,T)}} = 
    u_2^{\gen{\sigma(x,T)}}$ according to Lemma~\ref{lemma:propagate}.
\end{enumerate}

The next lemma shows that transversals to $\mathrm{PG}(N)$ are
permutations of $\F_N$ that are highly non-linear.

\begin{lemma}\label{lemma:transPGN}
  Let $T \in \Sigma_0(\F_N)$ be a transversal to $\mathrm{PG}(n)$.
  \begin{enumerate}
  \item $T$ is not linear. \label{lemma:transPGN1}
  \item The image, $U^T$, of any linear subspace $U$ of $\F_N^+ =
    \F_2^d$ is not a linear subspace when $1 < \dim_{\F_2} U < d$.
    \label{lemma:transPGN2}
  \end{enumerate}
\end{lemma}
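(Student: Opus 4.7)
The plan is to dispatch (1) immediately, and then derive (2) by examining the vertex links in the Steiner surface $M = \mathrm{PG}(N) \cup \mathrm{PG}(N)^T$.

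For (1), recall that $\Sigma(\mathrm{PG}(N)) = \GL{d}{2}$ is by definition the group of $\F_2$-linear permutations of $\F_N$. Hence if $T$ were linear, then $T \in \Sigma(\mathrm{PG}(N))$, so $\mathrm{PG}(N)^T = \mathrm{PG}(N)$, contradicting the disjointness of $\mathrm{PG}(N)$ and $\mathrm{PG}(N)^T$ required of a transversal (Definition~\ref{defn:steinertria}).

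For (2), I argue by contradiction: suppose $U$ is linear of dimension $r$ with $1 < r < d$ and $U^T$ is also linear (necessarily of the same dimension $r$, since $|U^T| = 2^r$). Fix any $v \in (U^T)^\times$. By Proposition~\ref{prop:Mmupm} the link of $v$ in $M$ is a single cycle of length $2^d - 2$ on the vertex set $\F_N^\times \setminus \{v\}$, and the two cycle-neighbors of any $y \neq v$ are precisely the third vertices of the two triangles through $\{v,y\}$: namely $v + y$ (from $\mathrm{PG}(N)$) and $T\bigl(T^{-1}(v) + T^{-1}(y)\bigr)$ (from $\mathrm{PG}(N)^T$).

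The crux is to show that for every $y \in (U^T)^\times \setminus \{v\}$ both of these neighbors remain inside $(U^T)^\times \setminus \{v\}$. Linearity of $U^T$ gives $v + y \in U^T$, and it is nonzero and different from $v$ because $y \neq 0, v$; linearity of $U$ applied to $T^{-1}(v), T^{-1}(y) \in U$ gives $T^{-1}(v) + T^{-1}(y) \in U$, whose $T$-image lies in $U^T$ and is again neither $0$ nor $v$. Consequently $(U^T)^\times \setminus \{v\}$ is a union of connected components of the link cycle; since a cycle is connected, this set must be either empty or the whole cycle, forcing $r = 1$ or $r = d$ and contradicting $1 < r < d$.

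The main obstacle is to recognize that the combined linearity of $U$ and $U^T$ is exactly what is needed to keep $(U^T)^\times$ closed under the ``advance to the next triangle'' operation around $v$ in $M$. Once that observation is made, the contradiction reduces to the triviality that no proper nonempty induced subgraph of a cycle is $2$-regular.
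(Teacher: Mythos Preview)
Your proof is correct and is essentially the paper's argument in geometric dress: where the paper shows that the $\sigma(x,T)$-orbit through any $y \in (U^T)^\times \setminus\{x\}$ stays inside $U^T$ and hence has length at most $|U|-2 < m$, you translate this via Proposition~\ref{prop:Mmupm} into the equivalent statement that $(U^T)^\times \setminus\{v\}$ is closed under link-adjacency and so cannot be a proper nonempty subset of a connected cycle. Part~(1) is handled identically in both.
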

\begin{proof}
  Proposition~\ref{prop:transversal} shows that no transversal to
  $\mathrm{PG}(N)$ can be linear since the identity is not
  transversal.
 
  Let $U$ be a proper linear subspace of dimension $>1$ of the
  $d$-dimensional $\F_2$-vector space $\F_N^+$ underlying the field
  $\F_N$. Suppose that the image, $U^T$, of $U$ is again a linear
  subspace. Choose two distinct and nonzero elements $x,y \in U^T$.
  This is possible as $\dim_{\F_2} U^T > 1$.  Note that the orbit of
  the $x$-shift $\sigma(x,T)$ through $y$ stays inside $U^T$ and does
  not contain $0$ and $x$.  The size of the orbit $y^{\sigma(x,T)}$ is
  at most $|U|-2 \leq 2^{d-1}-2 < 2^{d-1}-1 = m$. Thus $\sigma(x,T)$
  cannot have cycle structure $1^2m^2$.
\end{proof}

It is convenient to represent permutations of $\F_N$ by polynomials
with coefficients in $\F_N$.

\begin{rmk}[Permutation polynomials and (orientable) surface polynomials]
  For any permutation $T \in \Sigma_0(\F_N)$ there is a unique
  permutation polynomial \cite{lidl88} $P_T \in \F_N[X]$ of degree
  $N-2$ such that $P_T(x)=x^T$ for all $x \in \F_N$. The coefficients
  of $P_T = \sum_{i=1}^{N-2}a_iX^i$ are given by
\begin{equation*}
     (a_1,\ldots,a_{N-2})(x^{ij})_{1 \leq i,j \leq N-2} = 
     ((x^j)^T)_{1 \leq j \leq N-2},
   \end{equation*}
   where $x$ is  primitive in $\F_N$.
   If $T \in \Sigma_0(\F_N)$ is an (orientable) transversal to
   $\mathrm{PG}(N)$ we say that $P_T$ is an (orientable) surface
   polynomial. With this terminology, for a polynomial $P \in
   \F_{N}[X]$ we have that
\begin{equation*}
  \text{$P$ is an (orientable) surface polynomial} \iff
  \text{$(\mathrm{PG}(N),\mathrm{PG}(N)^P)$ is an (orientable)  
   Steiner surface},
\end{equation*}
where the Steiner surface has genus $\frac{1}{12}(N-5)(N-4)$
if orientable and the double of this number if nonorientable.
\end{rmk}

\begin{exmp}[Surface monomials]\label{exmp:monomial}
  Suppose that $\mathrm{GCD}(N-1,r)=1$ so that $T = X^r$ is a
  permutation monomial with $0^T=0$ and $1^T=1$.  Since $\mu^T(x,y)
  = (x^s+y^s)^r$, where $rs \equiv 1 \bmod N-1$ by Lemma~\ref{lemma:shiftA}, we see that
  $\F_{N}^\times \leq \Sigma(\mu) \cap \Sigma(\mu)^T$ and hence  by Lemma~\ref{lemma:shiftA}: 
  \begin{equation*}
    \text{$X^r$ is a surface monomial.} \iff
    \text{The permutation $\sigma(1,X^r) \colon y \to 1+(1+y^s)^r$ of $\F_{N}$ 
      has cycle structure $1^2m^2$.}
  \end{equation*}\\
    Lemma~\ref{lemma:transPGN} shows that
  $X^r$ is {\em not\/} a surface monomial if
  \begin{itemize}
  \item $d=d_1d_2$ with $d_1,d_2>1$
  so that $X^r$ normalizes the subfield $\F_{2^{d_1}}$ of dimension $1 <
  2^{d_1} < 2^d$, or,
 \item  $r$ is a power of $2$, so that $X^r$ is linear.
  \end{itemize}
  Thus surface monomials only occur when $d$ is prime.  See
  \cite[Theorem 5]{rifa2014} for a list of surface monomials for
  small values of $N$.  (Surface binomials are much more mysterious.)
\end{exmp}

\begin{prop}[$d=3$]\label{prop:PG8}
  $|\mathrm{T}(\mathrm{PG}(8))| = 1 = |\mathrm{T}^+(\mathrm{PG}(8))|$.
  If $T$ is the unique transversal to $\mathrm{PG}(8)$ then $\GL 32
  \cap \GL 32^T = N_{\GL 32}(\F_{2^3}^\times)$ is the normalizer of
  the Sylow $7$-subgroup of $\GL 32$.
\end{prop}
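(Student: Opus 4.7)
The strategy is to exhibit one orientable transversal explicitly and then use Proposition~\ref{prop:orbitcat}.\eqref{prop:orbitcat4} to show it represents the only double coset class; maximality of the Singer normaliser in $\GL 32\cong\mathrm{PSL}(3,2)$ then pins down $\GL 32\cap \GL 32^T$.

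\textbf{An explicit orientable transversal.} Since $d=3$ is prime, surface monomials are possible (Example~\ref{exmp:monomial}). I would take $T=X^3\in\Sigma_0(\F_8)$, which is nonlinear by Lemma~\ref{lemma:transPGN}. Since $T$ commutes with every multiplication $y\mapsto cy$ and with Frobenius, Proposition~\ref{prop:transversal}.\eqref{prop:transversal3} gives $N_{\GL 32}(\F_8^\times)\leq \GL 32\cap \GL 32^T$. Writing $A=\F_8^\times$, Lemma~\ref{lemma:shiftA} reduces transversality to the cycle structure of the single shift $\sigma(1,T)\colon y\mapsto 1+(1+y^5)^3$ (with $s=5\equiv 3^{-1}\bmod 7$); computing in $\F_8=\F_2[\alpha]/(\alpha^3+\alpha+1)$ one finds that $0,1$ are fixed and the other elements split into the $3$-cycles $(\alpha,\alpha^4,\alpha^2)$ and $(\alpha^3,\alpha^6,\alpha^5)$, giving cycle structure $1^1 3^2$. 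Orientability then follows from Lemma~\ref{lemma:LAmu} applied to the centralising fixed-point free map $s^+(y)=\alpha y$: since $A$ is transitive on $\F_8^\times$, a single equation at the transversal point $y=1$ suffices.

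\textbf{Uniqueness.} I apply Proposition~\ref{prop:orbitcat}.\eqref{prop:orbitcat4} with $G=S_7=\Sigma_0(\F_8)$, $H=\GL 32$ and $A=\F_8^\times$. The hypothesis $A^G_{\leq H}\subseteq A^H$ holds since every $S_7$-conjugate of $A$ inside $\GL 32$ is a Sylow $7$-subgroup of $\GL 32$ and all such are $\GL 32$-conjugate. The proposition then yields a surjection
\[
N_{\GL 32}(A)\backslash N_{S_7}(A)/N_{\GL 32}(A)\twoheadrightarrow \{[g]\in \GL 32\backslash S_7/\GL 32 \mid \GL 32\cap \GL 32^g\gtrsim A\}.
\]
Now $N_{S_7}(A)=\mathrm{AGL}(1,7)$ has order $42$ and contains $N_{\GL 32}(A)$ as an index-$2$ subgroup, so the left-hand side consists of exactly two double cosets: the trivial one (whose image $\GL 32$ contains no transversals since $\mu^T=\mu$ there) and the class of any $\tau\in N_{S_7}(A)\setminus N_{\GL 32}(A)$ (realised by $X^3$). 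Thus at most one $\GL 32$-double coset of transversals satisfies $\GL 32\cap \GL 32^T\gtrsim A$, and the construction above realises it. The remaining hurdle---and the main obstacle of the whole proof---is to show that \emph{every} transversal $T$ has $7\mid|\GL 32\cap \GL 32^T|$, so that $\GL 32\cap \GL 32^T\gtrsim A$; for $N=8$ this is checked directly by enumerating the $5040$ permutations of $\Sigma_0(\F_8)$ against the cycle-structure criterion of Proposition~\ref{prop:sigmaT}. Combined with Step~1, this gives $|\mathrm{T}(\mathrm{PG}(8))|=1$; since orientability depends only on the double coset, $|\mathrm{T}^+(\mathrm{PG}(8))|=1$ follows.

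\textbf{The intersection.} As $T$ normalises $A$ we already have $N_{\GL 32}(A)\leq \GL 32\cap \GL 32^T$. Since $N_{\GL 32}(A)$ is a maximal subgroup of $\GL 32\cong\mathrm{PSL}(3,2)$ (whose maximal subgroups are two classes of $S_4$ together with the Sylow $7$-normaliser), the intersection equals $N_{\GL 32}(A)$ or all of $\GL 32$. The latter would force $T\in N_{S_7}(\GL 32)=\GL 32$, using that $\mathrm{PSL}(3,2)$ is self-normalising in $S_7$ (its outer automorphism swaps point- and line-stabilisers and so cannot be induced by conjugation inside $S_7$); this contradicts $T\notin\Sigma(\mu)$. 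Hence $\GL 32\cap \GL 32^T=N_{\GL 32}(A)$.
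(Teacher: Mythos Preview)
Your argument is essentially correct, but it differs from the paper's proof and contains one small slip and one structural redundancy.

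The slip: $T=X^3$ does \emph{not} commute with the multiplications $m_c\colon y\mapsto cy$ (one has $Tm_c=m_{c^3}T$). What you actually need, and what Example~\ref{exmp:monomial} already records, is that each $m_c$ lies in $\Sigma(\mu^T)$ because $\mu^T(cx,cy)=((cx)^s+(cy)^s)^r=c\,\mu^T(x,y)$; equivalently, $T$ \emph{normalises} $\F_8^\times$. With that correction your containment $N_{\GL 32}(\F_8^\times)\le\GL32\cap\GL32^T$ stands.

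The redundancy: the paper's proof is a direct finite check---there are exactly four double cosets in $\GL32\backslash\Sigma_0(\F_8)/\GL32$, and one tests each representative with Proposition~\ref{prop:sigmaT} and Lemma~\ref{lemma:propagate}. Your route instead invokes Proposition~\ref{prop:orbitcat}.\eqref{prop:orbitcat4} to bound transversals with $\GL32\cap\GL32^T\gtrsim\F_8^\times$, but then concedes that a full enumeration of $\Sigma_0(\F_8)$ is still needed to show \emph{every} transversal has this property. Once you do that enumeration you can simply count transversal double cosets directly, so the normaliser--double-coset machinery buys nothing here; it is the right tool only from $d=5$ onward, where the full double-coset set is too large (cf.\ Proposition~\ref{prop:PG32}). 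On the positive side, your explicit verification of the $1^13^2$ cycle structure for $X^3$, and especially your clean argument that $\GL32\cap\GL32^T=N_{\GL32}(\F_8^\times)$ via maximality of the Singer normaliser together with self-normalisation of $\mathrm{PSL}(3,2)$ in $S_7$, supply details the paper merely asserts.
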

\begin{proof}
  The quotient set $\GL 32 \backslash \Sigma_0(\F_{2^3}) \slash \GL 32$
  has four elements. Using Proposition~\ref{prop:sigmaT} and
  Lemma~\ref{lemma:propagate} we see that only one of the four double
  cosets is transversal to $\mathrm{PG}(8)$ and that this transversal
  is orientable. 
\end{proof}

$P_3=X^3$ is one of the orientable transversal to $\mathrm{PG}(8)$.
Let $a$ be a primitive element of $\F_8$. Then $s^+(x) =ax$ defines an
orientation (Lemma~\ref{lemma:LAmu}) of the Steiner surface
$\mathrm{PG}(8) \cup \mathrm{PG}(8)^{P_3}$
(Figure~\ref{fig:stinertria}) and $N_{\GL 23}(\F_{2^3}^\times) =
\F_{2^3}^\times \rtimes \mathrm{Gal}(\F_{2^3},\F_2)$ acts
by orientation preserving auto\m s.

\begin{prop}[$d=4$]\label{prop:PG16}
  $|\mathrm{T}(\mathrm{PG}(16))|=4$ and
  $|\mathrm{T}^+(\mathrm{PG}(16))|=1$.  If $T$ is a nonorientable
  transversal then $\GL 42 \cap \GL 42^T$ is trivial in two cases and
  is the Sylow $5$-subgroup of $\GL 42$ in one case.  If $T$ is an
  orientable transversal then $\GL 42 \cap \GL 42^T$ is the Sylow
  $5$-subgroup of $\GL 42$.
\end{prop}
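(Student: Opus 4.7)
The plan is to mirror the argument for $d=3$ (Proposition \ref{prop:PG8}) but with a considerably heavier computational load: $|\GL 42| = 20160 = 2^6 \cdot 3^2 \cdot 5 \cdot 7$ and $|\Sigma_0(\F_{16})| = 15!$ rule out a direct enumeration of the double coset space $\GL 42 \backslash \Sigma_0(\F_{16}) / \GL 42$. Instead I would stratify the problem by the symmetry subgroup $\GL 42 \cap \GL 42^T$, whose order is bounded above by $n(n-1) = 210$ according to the Remark following Proposition \ref{prop:transversal}.

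For each Sylow $p$-subgroup $A \le \GL 42$ (with $p \in \{2,3,5,7\}$) Proposition \ref{prop:orbitcat}(4) produces a surjection
\begin{equation*}
  N_{\GL 42}(A) \backslash \{T \in N_{\Sigma_0(\F_{16})}(A) \mid T \text{ transversal to } \mathrm{PG}(16)\} / N_{\GL 42}(A) \twoheadrightarrow \mathrm{T}(\mathrm{PG}(16))_{\gtrsim A}.
\end{equation*}
The normalizer $N_{\Sigma_0(\F_{16})}(A)$ is drastically smaller than $\Sigma_0(\F_{16})$ because its elements must permute the $A$-orbits on $\F_{16}^\times$; for the Sylow $5$-subgroup, generated by multiplication by an element of order $5$ in $\F_{16}^\times$, these are three orbits of size $5$, so the normalizer sits inside a wreath product of the form $\Sigma_5 \wr \Sigma_3$, and comparable constraints hold for $p \in \{3,7\}$. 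In each case I would enumerate the candidate double cosets by computer, verify transversality via Proposition \ref{prop:sigmaT} reduced along an $A$-transversal in $\F_{16}^\times$ by Lemma \ref{lemma:shiftA}, and then apply Lemma \ref{lemma:propagate} to test orientability by fixing a local orientation at a chosen vertex and propagating it.

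For transversals with trivial symmetry $\GL 42 \cap \GL 42^T = 1$, I would run a direct computer search pruned aggressively by the cycle-structure requirement $1^1 7^2$ on every shift $\sigma(x,T)$. The principal obstacle is completeness: one must rule out transversals whose symmetry is a proper non-Sylow subgroup of $\GL 42$, a case not directly covered by the surjection above. The cleanest resolution is to show, for each prime $p$ dividing $|\GL 42|$ and each nontrivial proper subgroup $B < P$ of a Sylow $p$-subgroup $P \le \GL 42$, that the assumption $B \le \GL 42 \cap \GL 42^T$ together with the propagation identities of Proposition \ref{prop:transversal}(2) forces at least one full Sylow $p$-subgroup into the symmetry, thereby reducing every nontrivial case to a Sylow case already enumerated. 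With the count $|\mathrm{T}(\mathrm{PG}(16))| = 4$ thus certified, a final application of Lemma \ref{lemma:propagate} to each of the four representatives delivers $|\mathrm{T}^+(\mathrm{PG}(16))| = 1$ and identifies the symmetry of the orientable case as the Sylow $5$-subgroup.
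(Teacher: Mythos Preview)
Your starting premise is off: the paper's own proof \emph{is} the direct computer enumeration of $\GL 42 \backslash \Sigma_0(\F_{16}) / \GL 42$ that you dismiss as infeasible. With $|\Sigma_0(\F_{16})| = 15!$ and $|\GL 42| = 20160$, the number of left cosets is about $6.5\times 10^7$, well within reach of standard double-coset routines; one then applies the cycle-structure test of Proposition~\ref{prop:sigmaT} and the orientability test of Lemma~\ref{lemma:propagate} to each representative. The Sylow-stratified machinery you propose is exactly what the paper invokes for $d=5$ (Proposition~\ref{prop:PG32}), where direct enumeration genuinely is out of reach---but note that there the paper only claims to determine $\mathrm{T}(\mathrm{PG}(32))_{\gtrsim S_p}$ for each prime $p$, not the full cardinality $|\mathrm{T}(\mathrm{PG}(32))|$.

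That distinction exposes the real gap in your plan. To certify $|\mathrm{T}(\mathrm{PG}(16))|=4$ you must account for \emph{every} transversal, including those with trivial symmetry and those whose symmetry is a non-Sylow $p$-subgroup (e.g.\ a group of order $2$, $3$, or $4$ inside $\GL 42$). Your handling of the first case---``a direct computer search pruned aggressively by the cycle-structure requirement''---is essentially the exhaustive search you already ruled out, so you have not actually avoided it. Your handling of the second case is an unproved assertion: there is no general mechanism by which a cyclic subgroup of order $p$ in $\GL 42 \cap \GL 42^T$ forces a full Sylow $p$-subgroup into the intersection (this is automatic only for $p\in\{5,7\}$, where the Sylow subgroup has prime order). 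Without that step your stratification is incomplete, and the count of $4$ remains unverified. The simplest fix is the paper's: just run the full double-coset enumeration.
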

\begin{proof}  
  It is possible with a computer to go through all $ T \in \GL 42
  \backslash \Sigma_0(\F_{2^4}) \slash \GL 42$ and use
  Proposition~\ref{prop:sigmaT} to check if $T$ is an orientable
  transversal. This gives an explicit description of the set
  $\mathrm{T}(\mathrm{PG}(16))$ of transversals.
\end{proof}

$P_4=aX^{11}+X^6+X$, where $a$ is any primitive elements of $\F_{16}$
such that $a^4+a+1=0$, is an orientable transversal to
$\mathrm{PG}(16)$ and $\GL 42 \cap \GL42^{P_4} = \gen{a^3}$ is the
Sylow $5$-subgroup of $\F_{2^4}^\times$ and $\GL 42$.  The primitive
element $a$ can be chosen such that $s^+(x)=a^5x$ defines an orientation
(Lemma~\ref{lemma:LAmu}) on the orientable Steiner surface
$\mathrm{PG}(16) \cup \mathrm{PG}(16)^{P_4}$ such that $\GL 42 \cap
\GL 42^{P_4}$ acts by orientation preserving auto\m s.  There are no
surface monomials as $d$ is a composite number
(Example~\ref{exmp:monomial}).

\begin{prop}[$d=5$, \mbox{\cite[Example 1]{rifa2014}}]\label{prop:PG32}
  Let
  $p$ is a prime divisor of $|\GL 52|$ and $S_p$ the Sylow
  $p$-subgroup of $\GL 52$. Then
  $|\mathrm{T}^+(\mathrm{PG}(32))_{\gtrsim S_p}| = 0$ and
  \begin{equation*}
    |\mathrm{T}(\mathrm{PG}(32))_{\gtrsim S_p}| =
    \begin{cases}
      2, & p=31, \\   65, & p=5, \\ 0, & p=2,3,7.
    \end{cases} 
  \end{equation*}
  For any $T \in \mathrm{T}(\mathrm{PG}(32))_{\gtrsim S_{31}}$, $\GL
  52 \cap \GL 52^T = N_{\GL 52}(\F_{32}^\times) =\F_{32}^\times
  \rtimes \mathrm{Gal}(\F_{2^5},\F_2)$. For any $T \in
  \mathrm{T}(\mathrm{PG}(32))_{\gtrsim S_{5}}$, 
  $\GL 52 \cap \GL 52^T$ is $\mathrm{Gal}(\F_{2^5},\F_2)$ (in $63$
  cases) or $N_{\GL 52}(\F_{32}^\times)$ (in $2$ cases).
\end{prop}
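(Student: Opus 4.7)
The plan is to apply the surjection~\eqref{eq:surjection} systematically for each Sylow $p$-subgroup $S_p$ of $\GL 52$ and to combine this with the cycle-structure criterion of Proposition~\ref{prop:sigmaT} and the orientability criterion of Lemma~\ref{lemma:propagate}. First, I would factor $|\GL 52| = 2^{10} \cdot 3^2 \cdot 5 \cdot 7 \cdot 31$, so that the relevant primes are $\{2,3,5,7,31\}$. For each such prime the Sylow subgroup $S_p$ is unique up to $\GL 52$-conjugacy, hence the hypothesis of~\eqref{eq:surjection} is satisfied and we obtain a surjection onto $\mathrm{T}(\mathrm{PG}(32))_{\gtrsim S_p}$ from $\GL 52$-double cosets of transversal symmetries in the much smaller group $N_{\Sigma_0(\F_{32})}(S_p)$.

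Second, I would describe each $S_p$ and its normalizers. For $p=31$ the Sylow subgroup is the Singer cycle $S_{31}=\F_{32}^\times$, and $N_{\GL 52}(S_{31}) = \F_{32}^\times \rtimes \mathrm{Gal}(\F_{32},\F_2)$; for $p=5$, $S_5$ is the unique order-$5$ subgroup of the Singer cycle, with the same normalizer in $\GL 52$. The ambient normalizer $N_{\Sigma_0(\F_{32})}(S_p)$ in either case consists of permutations of $\F_{32}$ that preserve the orbit partition of $S_p$ and act compatibly on each orbit, and is therefore of manageable size. For $p\in\{2,3,7\}$ a parallel but more involved description of $S_p$ inside $\GL 52$ and of its ambient normalizer is needed.

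Third, for each prime I would enumerate double coset representatives $T$ and apply Proposition~\ref{prop:sigmaT}, with Lemma~\ref{lemma:shiftA} used to restrict the cycle-structure check for $\sigma(x,T)$ to an $S_p$-transversal of $\F_{32}$ rather than all of $\F_{32}$. The transversals so identified are then tested for orientability using Lemma~\ref{lemma:propagate} (or equivalently Lemma~\ref{lemma:LAmu} with the centralizing action of $S_p$). The expected output is: two transversals for $p=31$, neither orientable; sixty-five transversals for $p=5$, none orientable, of which the $63$ cases have $\GL 52 \cap \GL 52^T = \mathrm{Gal}(\F_{32},\F_2)$ and the remaining $2$ cases have $\GL 52 \cap \GL 52^T = N_{\GL 52}(\F_{32}^\times)$ and coincide with the two $p=31$ transversals (since $S_{31}\supset S_5$ forces inclusion $\mathrm{T}(\mathrm{PG}(32))_{\gtrsim S_{31}}\subseteq \mathrm{T}(\mathrm{PG}(32))_{\gtrsim S_5}$). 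For $p\in\{2,3,7\}$, no element of $N_{\Sigma_0(\F_{32})}(S_p)$ passes the cycle-structure test.

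The main obstacle is the finite but non-trivial computational enumeration, especially for $p\in\{2,3,7\}$, where $S_p$ does not lie in the Singer cycle and $N_{\Sigma_0(\F_{32})}(S_p)$ is harder to parametrize. A second subtle point is the orientability side: since $n=31\equiv 7\bmod 12$, equation~\eqref{eq:eulerSL} gives even Euler characteristic $-124$, so orientability cannot be ruled out by parity and one must actually verify the coherent-propagation condition of Lemma~\ref{lemma:propagate} at a chosen base vertex for every surviving transversal. The verification follows the template of \cite[Example~1]{rifa2014}, to which the bulk of the explicit calculation can be delegated.
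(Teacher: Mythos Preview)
Your overall strategy coincides with the paper's: both reduce the search via the surjection~\eqref{eq:surjection} to double cosets in $N_{\GL 52}(S_p) \backslash N_{\Sigma_0(\F_{32})}(S_p) / N_{\GL 52}(S_p)$ and then apply the cycle-structure test of Proposition~\ref{prop:sigmaT} (with Lemma~\ref{lemma:shiftA}) and the orientability test of Lemma~\ref{lemma:propagate} by computer. The paper's proof is in fact terser than your outline.

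There is, however, a concrete error in your identification of the Sylow $5$-subgroup. The Singer cycle $\F_{32}^\times$ has order $31$, which is prime, so it contains \emph{no} subgroup of order $5$; the Sylow $5$-subgroup of $\GL 52$ is instead (conjugate to) the Frobenius group $\mathrm{Gal}(\F_{2^5}/\F_2)\cong\Z/5\Z$. This is consistent with the statement of the proposition, where $\GL 52\cap\GL 52^T=\mathrm{Gal}(\F_{2^5},\F_2)$ in $63$ of the $65$ cases. Consequently your claimed containment $S_{31}\supset S_5$ is false, and with it your explanation for why the two $p=31$ transversals reappear among the $p=5$ transversals. The correct reason is that for those two $T$ one has $\GL 52\cap\GL 52^T=N_{\GL 52}(\F_{32}^\times)=\F_{32}^\times\rtimes\mathrm{Gal}(\F_{2^5},\F_2)$, a group of order $155=5\cdot 31$ which contains a Sylow $5$-subgroup as well as a Sylow $31$-subgroup, so $\GL 52\cap\GL 52^T\gtrsim S_5$ and $\gtrsim S_{31}$ simultaneously. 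This misidentification also invalidates your description of $N_{\Sigma_0(\F_{32})}(S_5)$ and $N_{\GL 52}(S_5)$; the actual enumeration for $p=5$ must be set up around the Frobenius, not around a subgroup of the Singer cycle.
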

\begin{proof} 
  When $d=5$ it is no longer feasible to search for transversals
  in the complete
  set $\GL d2 \backslash \Sigma_0(\F_{2^d}) / \GL d2$ of double cosets
  as we did for $d=4$.  However, by \eqref{eq:surjection} all
  transversals in $\mathrm{T}(\mu)_{\gtrsim S_p}$ are represented by
  double cosets of the smaller set $N_{\GL 52}(S_p) \backslash
  N_{\Sigma_0(\F_{2^5})}(S_p) / N_{\GL 52}(S_p)$, and it is possible
  with a computer to locate the coset representatives of (orientable)
  transversals using the tests of Proposition~\ref{prop:sigmaT}.
  We find that the set
  $\mathrm{T}(\mathrm{PG}(32))_{\gtrsim S_p}$ is empty except for
  $p=31$ and $p=5$.
\end{proof}

$P_5=X^5$ is a nonorientable transversal to $\mathrm{PG}(32)$. No 
  orientable transversals to $\mathrm{PG}(32)$ are known.

\begin{prop}[$d=7$, \mbox{\cite[Example 2]{rifa2014}}]\label{prop:PG128}
  $|\mathrm{T}(\mathrm{PG}(128))_{\gtrsim \F_{2^8}^\times}| = 8$. For
  any $T \in \mathrm{T}(\mathrm{PG}(128))_{\gtrsim \F_{2^8}^\times}$,
  $T$ is non\-orientable and the group $\GL 72 \cap \GL 72^T = N_{\GL
    72}(\F_{2^7}^\times)$.  One of these transversals has permutation
  monomial $P_7 = X^7$.
\end{prop}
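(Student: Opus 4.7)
The plan is to imitate the analysis of Proposition~\ref{prop:PG32} using the Singer cycle $A = \F_{2^7}^\times$. Since $|A| = 127$ is a Mersenne prime dividing $|\GL 72|$ exactly once, $A$ is a Sylow $127$-subgroup of $\GL 72$; hence Proposition~\ref{prop:orbitcat}.\eqref{prop:orbitcat4} applies and delivers the surjection~\eqref{eq:surjection} from those double cosets in $N_{\GL 72}(A) \backslash N_{\Sigma_0(\F_{128})}(A) / N_{\GL 72}(A)$ that consist of transversals onto $\mathrm{T}(\mathrm{PG}(128))_{\gtrsim A}$.

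Next I would identify the two normalizers. The larger one, $N_{\Sigma_0(\F_{128})}(A) = A \rtimes \mathrm{Aut}(A)$, consists of the maps $x \mapsto \alpha x^k$ with $\alpha \in \F_{128}^\times$ and $k \in (\Z/127\Z)^\times$, of order $127 \cdot 126$. The subgroup $N_{\GL 72}(A) = \F_{128}^\times \rtimes \mathrm{Gal}(\F_{2^7}/\F_2)$ cuts out the $\F_2$-linear such maps, namely those with $k \in \langle 2 \rangle$, of order $127 \cdot 7$. Absorbing $\F_{128}^\times$ into either side and identifying $X^k$ with $X^{2k}$ via Galois, the double coset space is in bijection with the $126/7 = 18$ cosets of $\langle 2 \rangle$ in $(\Z/127\Z)^\times$, each represented by a pure permutation monomial $X^k$.

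For each of these $18$ candidates I would apply the criterion of Example~\ref{exmp:monomial}: $X^k$ is a surface monomial iff the shift $y \mapsto 1 + (1+y^s)^k$ on $\F_{128}$, with $ks \equiv 1 \bmod 127$, has cycle type $1^2 \cdot 63^2$. This is a direct computer search; the expected output is that exactly $8$ of the $18$ pass, that $k = 7$ is among them (yielding $P_7 = X^7$), and that the orientability check of Lemma~\ref{lemma:propagate} fails in every one of the $8$ cases. The equality $\GL 72 \cap \GL 72^{X^k} = N_{\GL 72}(A)$ then follows for each of the $8$ from the inclusion $\supseteq$ (scalar multiplications and the Frobenius automorphism trivially intertwine the shifts of a monomial in the sense of Proposition~\ref{prop:transversal}.\eqref{prop:transversal3}), combined with the computer check that no further $\F_2$-linear element satisfies the same intertwining condition.

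The main obstacle is purely computational: determining the cycle types of $18$ explicit permutations of $\F_{128}$ and then running the global coherence test of Lemma~\ref{lemma:propagate} in the $8$ successful cases. These tasks are routine with a computer algebra system but out of reach by hand. Conceptually, no new ingredient beyond Section~\ref{sec:steiner} and the earlier discussion in Section~\ref{sec:PGd2} is required.
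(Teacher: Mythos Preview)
Your proposal is correct and follows exactly the approach the paper intends: its entire proof is the single sentence ``We proceed as in the proof of Proposition~\ref{prop:PG32},'' and you have spelled out precisely that procedure---reduce via the surjection~\eqref{eq:surjection} with $A=\F_{2^7}^\times$ (the Sylow $127$-subgroup; note the paper's ``$\F_{2^8}^\times$'' is a typo), identify the $18$ double cosets as monomials $X^k$, and run the transversality and orientability tests of Proposition~\ref{prop:sigmaT} and Lemma~\ref{lemma:propagate} by computer. The only point you leave implicit is why the $8$ successful monomials land in \emph{distinct} $\GL 72$-double cosets (the surjection~\eqref{eq:surjection} is not a priori injective here since $x\mapsto x^k$ need not normalize $\GL 72$), but this is either absorbed into the computer verification or follows once you know $\GL 72\cap\GL 72^{X^k}=N_{\GL 72}(\F_{2^7}^\times)$ is self-normalizing in $\GL 72$.
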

\begin{proof} 
  We proceed as in the proof of Proposition~\ref{prop:PG32}.
\end{proof}

  When $d=6,8,9$,  {\bf no} surface polynomials for $\F_{2^d}$ are
  known and surface monomials do not exist (Lemma~\ref{lemma:transPGN}).

  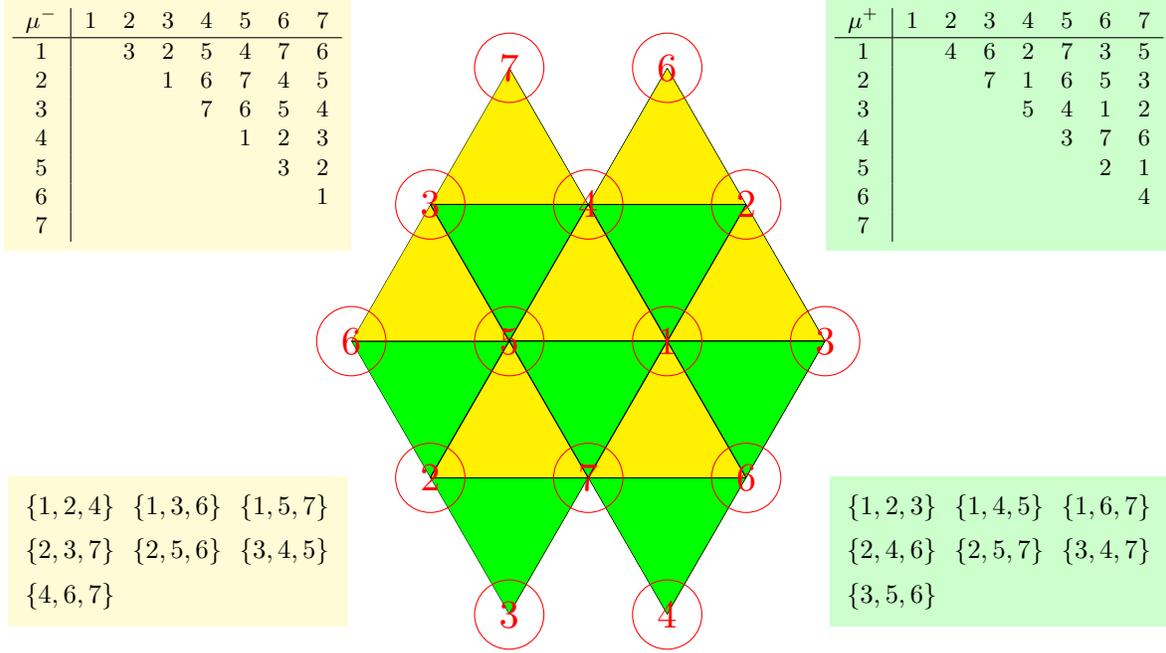
\begin{figure}[t]
  \centering
\begin{tikzpicture}[scale=.6]
 \pgfmathsetmacro{\r}{3.5};
 \pgfmathsetmacro{\a}{360/6};
 
 \begin{scope}
   
\clip (240 : 2*\r) -- ++ (60 : \r) -- ++(300:\r) 
  -- ++(60:2*\r) -- ++(120:2*\r) -- ++(240:\r) -- ++(120:\r)
 -- ++(240:2*\r) -- ++(300:2*\r) --  (240 : 2*\r) -- cycle;

     \foreach \k in {1,3,5,7} {
     \draw[fill=green] 
     (0,0) -- (\k*\a:\r) -- (\k*\a+\a:\r) -- (0,0) --
     cycle;};
       \foreach \k in {2,4,6} {
     \draw[fill=yellow] (0,0) -- (\k*\a:\r) -- (\k*\a+\a:\r) -- (0,0) --
     cycle;};
 
    \begin{scope}[shift={(0:\r)}]
     \foreach \k in {1,3,5,7} {
     \draw[fill=green] 
     (0,0) -- (\k*\a:\r) -- (\k*\a+\a:\r) -- (0,0) --
     cycle;};
       \foreach \k in {2,4,6} {
     \draw[fill=yellow] (0,0) -- (\k*\a:\r) -- (\k*\a+\a:\r) -- (0,0) --
     cycle;};
   \end{scope}

 \begin{scope}[shift={(60:\r)}]
     \foreach \k in {1,3,5,7} {
     \draw[fill=green] 
     (0,0) -- (\k*\a:\r) -- (\k*\a+\a:\r) -- (0,0) --
     cycle;};
       \foreach \k in {2,4,6} {
     \draw[fill=yellow] (0,0) -- (\k*\a:\r) -- (\k*\a+\a:\r) -- (0,0) --
     cycle;};
   \end{scope}

 \begin{scope}[shift={(120:\r)}]
     \foreach \k in {1,3,5,7} {
     \draw[fill=green] 
     (0,0) -- (\k*\a:\r) -- (\k*\a+\a:\r) -- (0,0) --
     cycle;};
       \foreach \k in {2,4,6} {
     \draw[fill=yellow] (0,0) -- (\k*\a:\r) -- (\k*\a+\a:\r) -- (0,0) --
     cycle;};
   \end{scope}
 
 \begin{scope}[shift={(180:\r)}]
     \foreach \k in {1,3,5,7} {
     \draw[fill=green] 
     (0,0) -- (\k*\a:\r) -- (\k*\a+\a:\r) -- (0,0) --
     cycle;};
       \foreach \k in {2,4,6} {
     \draw[fill=yellow] (0,0) -- (\k*\a:\r) -- (\k*\a+\a:\r) -- (0,0) --
     cycle;};
   \end{scope}

 \begin{scope}[shift={(240:\r)}]
     \foreach \k in {1,3,5,7} {
     \draw[fill=green] 
     (0,0) -- (\k*\a:\r) -- (\k*\a+\a:\r) -- (0,0) --
     cycle;};
       \foreach \k in {2,4,6} {
     \draw[fill=yellow] (0,0) -- (\k*\a:\r) -- (\k*\a+\a:\r) -- (0,0) --
     cycle;};
   \end{scope}

 \begin{scope}[shift={(300:\r)}]
     \foreach \k in {1,3,5,7} {
     \draw[fill=green] 
     (0,0) -- (\k*\a:\r) -- (\k*\a+\a:\r) -- (0,0) --
     cycle;};
       \foreach \k in {2,4,6} {
     \draw[fill=yellow] (0,0) -- (\k*\a:\r) -- (\k*\a+\a:\r) -- (0,0) --
     cycle;};
   \end{scope}

\end{scope}

\node[red,scale=1.5,draw, circle] at (0,0) {$1$};
\node[red,scale=1.5,draw, circle] (3a) at (0:\r) { $3$};
\node[red,scale=1.5,draw, circle] at (60:\r) {$2$};
\node[red,scale=1.5,draw, circle] at (120:\r) { $4$};
\node[red,scale=1.5,draw, circle] at (180:\r) { $5$};
\node[red,scale=1.5,draw, circle] at (240:\r) { $7$};
\node[red,scale=1.5,draw, circle] at (300:\r) { $6$};

\node[red,scale=1.5,draw, circle] at ($(60:\r)+(120:\r)$) {$6$};

\node[red,scale=1.5,draw, circle] (7a) at ($(120:\r)+(120:\r)$) {$7$};
\node[red,scale=1.5,draw, circle]  at ($(120:\r)+(180:\r)$) {$3$};

\node[red,scale=1.5,draw, circle] (6a)  at ($(180:\r)+(180:\r)$) {$6$};
\node[red,scale=1.5,draw, circle] at ($(180:\r)+(240:\r)$) {$2$};

\node[red,scale=1.5,draw, circle] at ($(240:\r)+(240:\r)$) {$3$};
\node[red,scale=1.5,draw, circle] at ($(240:\r)+(300:\r)$) {$4$};

\node[anchor= south west,fill=green!20] (mu+) at ($(3a)+(0,2)$) {\small
\begin{tabular}{>{$}c<{$}| *{7} {>{$}c<{$}}}
 \mu^+  & 1 & 2 & 3 & 4 & 5 & 6 & 7 \\ \hline
 1 && 4 & 6 & 2 & 7 & 3 & 5 \\
 2 &&& 7 & 1 & 6 & 5 & 3 \\
 3 &&&& 5 & 4 & 1 & 2 \\
 4 &&&&& 3 & 7 & 6 \\
 5 &&&&&& 2 & 1\\
 6 &&&&&&& 4 \\
 7 &&&&&&&
\end{tabular}};

\node[anchor= south east, fill=yellow!20] (mu-) at ($(6a)+(0,2)$) 
{\small
\begin{tabular}{>{$}c<{$}| *{7} {>{$}c<{$}}}
  \mu^- & 1 & 2 & 3 & 4 & 5 & 6 & 7 \\ \hline
  1 && 3 & 2 & 5 & 4 & 7 & 6 \\
  2 &&& 1&6&7&4&5\\
  3 &&&& 7&6&5&4\\
  4 &&&&& 1&2&3\\
  5 &&&&&& 3&2\\
  6 &&&&&&& 1\\
  7 &&&&&&&
\end{tabular}};

 \matrix  [matrix of math nodes, below = 3cm of mu-,
 anchor=north,fill=yellow!20] {
  \{1,2,4\} & \{1,3,6\} & \{1,5,7\} \\ 
  \{2,3,7\} & \{2,5,6\} & \{3,4,5\} \\
  \{4,6,7\} \\};

 \matrix  [matrix of math nodes, below = 3cm of mu+,
 anchor=north,fill=green!20] {
  \{1,2,3\} & \{1,4,5\} & \{1,6,7\} \\ 
  \{2,4,6\} & \{2,5,7\} & \{3,4,7\} \\
  \{3,5,6\} \\};

\end{tikzpicture}  
  \caption{The orientable Steiner surface
    $(\mathrm{PG}(8),\mathrm{PG}(8)^{P_3})$ of genus $1$}
  \label{fig:stinertria}
\end{figure}


Here is what we know about the $2$-chromatic numbers:
\begin{itemize}
\item For all $d \geq 4$, $3 \leq \chs 2{\mathrm{PG}(2^d)} \leq \chs
  2{\mathrm{PG}(2^{d+1})} \leq \chs 2{\mathrm{PG}(2^d)}+1$ 
   and $\chs 2{\mathrm{PG}(2^d)} < d$
  \cite{rosa69,rosa70}. Furthermore, $\chs 2{\mathrm{PG}(8)} = \chs 2{\mathrm{PG}(16)} = 3$
and  $\chs 2{\mathrm{PG}(32)} = 4$ \cite{rosa70}.

\item It is known from \cite[Theorem 8]{fugere94} that $\chs 2{\mathrm{PG}(64)} = 5$.
  Consequently, 
  $\chs 2{\mathrm{PG}(128)} \geq \chs 2{\mathrm{PG}(64)}= 5$.

\item $\chs 2{\mathrm{PG}(2^d)} \to
  \infty \text{\ for\ } d \to \infty$ by \cite[Corollary 2]{haddad99}.

\item The $2$-chromatic number $\chs 2{\mathrm{PG}(8) \cup
    \mathrm{PG}(8)^{P_3}} = 3$ by direct computer calculations.

\item The $2$-chromatic number $\chs 2{\mathrm{PG}(16) \cup
    \mathrm{PG}(16)^{P_4}} = 3$ by direct computer calculations.

\item The $2$-chromatic number $\chs 2{\mathrm{PG}(32) \cup
    \mathrm{PG}(32)^{P_5} } = 4$ since $\chs 2{\mathrm{PG}(32)} \geq
  4$ and it is possible with a computer to find a $(4,2)$-coloring of
  this genus $126$ nonorientable surface on $31$ vertices.

\item The $2$-chromatic number $\chs 2{\mathrm{PG}(128) \cup
    \mathrm{PG}(128)^{P_7}}$ equals $5$ or $6$ since $\chs
  2{\mathrm{PG}(128)} \geq 5$, and it is possible with a computer to
  find a $(6,2)$-coloring of this genus $2542$ nonorientable surface on $127$ vertices with $f=(127,8001,5334)$.
  This is the smallest known example (in terms of number of faces) of a triangulated surface with
    $2$-chromatic number at least $5$. (For an orientable genus $620$ surface with $f=(2017,9765,6510)$ and $\chi_2= 5$ see Section~\ref{sec:2-2}.)
 A list of facets \texttt{PG128\_PG128P7} of the triangulation $\mathrm{PG}(128) \cup \mathrm{PG}(128)^{P_7}$
can be found online at \cite{BenedettiLutz_LIBRARY}. 
\end{itemize}

\begin{thm}\label{thm:chr2Seq5}
   The $2$-chromatic number of the 
    genus $2542$ nonorientable triangulated surface
     $\mathrm{PG}(128) \cup \mathrm{PG}(128)^{P_7}$ on $127$ vertices
     is $5$ or $6$.
\end{thm}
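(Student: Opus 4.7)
The plan is to squeeze the $2$-chromatic number between the two values by establishing the lower bound $\chi_2\geq 5$ via monotonicity from $\mathrm{PG}(64)$ and the upper bound $\chi_2\leq 6$ via an explicit coloring.

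First, I would verify that the simplicial complex in question is actually a combinatorial surface, so that the invariant $\chi_2$ is well-defined. This has already been handled by Proposition~\ref{prop:PG128}: $P_7=X^7$ is a (nonorientable) transversal to $\mathrm{PG}(128)$, so $(\mathrm{PG}(128),\mathrm{PG}(128)^{P_7})$ is a Steiner surface on $V=\F_{128}^\times$ in the sense of Definition~\ref{defn:steinertria}. The genus $\tfrac{1}{6}(n-4)(n-3) = \tfrac{1}{6}\cdot 123\cdot 124 = 2542$ and face vector $f=(127,8001,5334)$ follow from the standard formulas \eqref{eq:eulerSL}.

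Next, for the lower bound, I would argue by passing to the subcomplex. Any $(k,2)$-coloring of the union $\mathrm{PG}(128)\cup \mathrm{PG}(128)^{P_7}$ restricts to a $(k,2)$-coloring of the full sub-hypergraph $\mathrm{PG}(128)$, so
\begin{equation*}
\chs 2{\mathrm{PG}(128)\cup \mathrm{PG}(128)^{P_7}}\;\geq\; \chs 2{\mathrm{PG}(128)}.
\end{equation*}
The monotonicity $\chs 2{\mathrm{PG}(2^d)}\leq \chs 2{\mathrm{PG}(2^{d+1})}$ of Rosa, combined with Fug\`ere's equality $\chs 2{\mathrm{PG}(64)}=5$, then gives $\chs 2{\mathrm{PG}(128)}\geq 5$, and hence the lower bound.

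For the upper bound, I would exhibit a concrete $(6,2)$-coloring of the $127$ vertices by computer search. Because $|V|=127$ is a prime and $\Sigma(\mathrm{PG}(128))\cap\Sigma(\mathrm{PG}(128))^{P_7}=N_{\GL 72}(\F_{2^7}^\times)$ acts by symmetries on the surface, I would exploit this symmetry to cut down the search: fix the color of a Singer orbit representative, propagate forced colors across shared triangles (using the algorithmic framework introduced earlier for $(k,2)$-coloring surfaces), and branch on the reachable vertex with the smallest list of admissible colors until a complete coloring is produced. Verifying the output is linear in the number of $5334$ triangles. The certified coloring would be deposited as \texttt{PG128\_PG128P7} in \cite{BenedettiLutz_LIBRARY}, making the upper bound checkable by hand.

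The genuine obstacle is closing the gap between $5$ and $6$. Exhibiting a $(5,2)$-coloring, or proving that none exists, requires either a clever structural argument exploiting the fact that $P_7$ is the Frobenius-like map $x\mapsto x^7$ (so that the transition shifts $\sigma(x,P_7)$ are governed by the multiplicative structure of $\F_{128}$), or an exhaustive search that, even with Singer symmetry, appears infeasible given the branching factor on a neighborly surface of this size. I would not attempt to resolve this in the present proof, and would simply state the result with the sharp bounds $5$ and $6$.
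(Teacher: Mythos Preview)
Your proposal is correct and follows essentially the same route as the paper: the lower bound $\chi_2\geq 5$ comes from the subcomplex inequality $\chi_2(\mathrm{PG}(128)\cup\mathrm{PG}(128)^{P_7})\geq\chi_2(\mathrm{PG}(128))$ together with Rosa's monotonicity and the Fug\`ere--Haddad--Wehlau result $\chi_2(\mathrm{PG}(64))=5$, and the upper bound $\chi_2\leq 6$ is established by a computer-found $(6,2)$-coloring. One minor correction: the file \texttt{PG128\_PG128P7} in \cite{BenedettiLutz_LIBRARY} stores the facet list of the triangulation rather than the coloring itself.
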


The authors of \cite{grannell1998} assert on p.~$333$ that
$\mathrm{PG}(32)$ has a \lq cyclic bi-embedding in an orientable
surface\rq .  We have not been able to verify this assertion. However, we go along with Theorem~5 in \cite{rifa2014}
that rules out orientable bi-embeddings of ${\mathrm{PG}(2^d)}$ for $5\leq d\leq 19$.
Presumably, by Corollary~\ref{prop:orbitcat}.\eqref{prop:orbitcat4},
$\GL 52 \cap \GL 52^T$ would contain a Sylow $31$-subgroup of $\GL 52$
for any such transversal $T$, but this contradicts
Proposition~\ref{prop:PG32} according to which
such transversals are nonorientable.

It is claimed in \cite[Theorem 3.1]{donovan2010} that
$\mathrm{PG}(N)$ admits an (orientable?) transversal for all
$N=2^d$. The proof appears to be incorrect.

\section{Chromatic numbers of higher-dimensional manifolds}
\label{sec:colASC}

It was noted in the introduction that the chromatic numbers of a (compact) triangulable $d$-manifold $M^d$ 
form a descending sequence
\begin{equation*}
  \chs 1{M^d} \geq \chs 2{M^d} \geq \cdots \geq \chs d{M^d} \geq \chs{d+1}{M^d} = 1.
\end{equation*}
We shall now see that roughly the first half of these chromatic
numbers are infinite. 
As usual, $B^d$ denotes the $d$-dimensional ball and $S^d$ the
$d$-dimensional sphere.  Then $\chs s{M^d} \geq \chs s{{B}^d}$
for $s \leq d$ and $\chs s{{S}^d} = \chs s{{B}^d}$ for $s<d$.

\begin{lemma}\label{lemma:monotone}
  $\chs s{{S}^d} \leq \chs s{{S}^{d+1}}$ for $d\geq 1$ and $1\leq s\leq d$.
\end{lemma}
\begin{proof}
  Any triangulation of ${S}^d$ is a subcomplex of a
  triangulation of its suspension ${S}^{d+1}$.
\end{proof}

\begin{thm}\label{thm:chrsSd}
  $\chs s{{S}^d} = \infty$ for $d \geq 3$ and $1 \leq s \leq \lceil d/2 \rceil$.
\end{thm}
\begin{proof}
Let $d\geq 3$ be odd. Then
  ${S}^d$ admits a triangulation $\partial \mathrm{CP}(m,d+1)$
  as the boundary of the cyclic polytope $\mathrm{CP}(m,d+1)$ on $m$
  vertices in $\R^{d+1}$. The triangulation $\mathrm{CP}(m,d+1)$ is
  $\lfloor (d+1)/2 \rfloor$-neighborly in the sense that it has the
  same $s$-skeleton as the full $(m-1)$-simplex $\Delta^{m-1}$ when $s < \lfloor
  \frac{d+1}{2} \rfloor = \lceil d/2 \rceil$. Thus $\mathrm{CP}(m,d+1)$ has the same $s$-chromatic number,
  $\lceil m/s \rceil$, as $\Delta^{m-1}$ when $1 \leq s < \frac{d+1}{2}$.
  Since we can choose $m$ to be arbitrarily large, and since $\chs s{{S}^{d-1}} \leq \chs s{{S}^d}$ for even $d$ by Proposition~\ref{lemma:monotone}, the statement follows for $1 \leq s < \lceil d/2 \rceil$.

  Let $s=\lceil d/2 \rceil$. The
  Euclidean $(2s-1)$-space contains $s$-dimensional geometric simplicial complexes with arbitrarily
  high $s$-chromatic numbers \cite[Theorem 22]{HeisePanagiotouPikhurkoTaraz2014}.
  Since we can extend these embedded triangulations to triangulations of $B^{2s-1}$  \cite[Theorem I.2.A]{bing83}, 
  we see that
  $\chs {s}{{S}^{2s-1}} = \infty$.  Then also $\chs {s}{{S}^{2s}} = \infty$ by the monotonicity of
  Lemma~\ref{lemma:monotone}.
\end{proof}

\begin{thm}\label{thm:chsssdmfd}
  $\chs s{{M}^d} = \infty$ when ${M}^d$ is a triangulable $d$-manifold with $d \geq 3$  and $s \leq \lceil
  d/2 \rceil$.
\end{thm}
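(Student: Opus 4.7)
The plan is to reduce the theorem to the sphere case (Proposition~\ref{thm:chsssd}) via a connected-sum construction. Since $M^d \# S^d$ is PL-homeomorphic to $M^d$ for any closed connected PL $d$-manifold, one can glue a high-$\chi_s$ triangulation of $S^d$ into any fixed triangulation of $M^d$ without changing the underlying manifold, and the $s$-chromatic number of the resulting triangulation can only grow, because $\chi_s$ of a complex is at least $\chi_s$ of any subcomplex.

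Concretely, I would fix a triangulation $T_M$ of $M^d$ and, for each $n$, take a triangulation $T_n$ of $S^d$ furnished by Proposition~\ref{thm:chsssd} with $\chs{s}{T_n} \geq n$. The one delicate point is to locate a single $d$-simplex $\sigma_n$ of $T_n$ whose removal does not destroy the high $s$-chromatic number. When $s < \lceil d/2 \rceil$, take $T_n = \partial \mathrm{CP}(m,d+1)$: its $s$-skeleton is the complete $s$-skeleton of $\Delta^{m-1}$, so deleting any one facet leaves $\chs{s}{T_n \setminus \{\sigma_n\}} = \lceil m/s \rceil \geq n$. When $s = \lceil d/2 \rceil$, the triangulation $T_n$ is built by Bing-extending an $s$-dimensional complex $C_n \subset \R^{2s-1}$ with $\chs{s}{C_n} \geq n$ to a PL $d$-ball and then coning off its boundary over a new apex vertex $v_n \notin C_n$; choosing $\sigma_n$ to be any cone-off simplex containing $v_n$ gives $C_n \subseteq T_n \setminus \{\sigma_n\}$, and hence $\chs{s}{T_n \setminus \{\sigma_n\}} \geq n$.

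The remaining step is the connected sum itself. I would pick any top simplex $\sigma_M$ of $T_M$, fix a bijection between the vertex sets of $\partial \sigma_M$ and $\partial \sigma_n$, and form
\[
T \;=\; (T_M \setminus \{\sigma_M\}) \cup_{\partial} (T_n \setminus \{\sigma_n\})
\]
by identifying the two boundary copies of $\partial \Delta^d$ along the induced simplicial isomorphism. The resulting complex is a simplicial triangulation of the PL manifold $M^d \# S^d \cong M^d$ containing $T_n \setminus \{\sigma_n\}$ as a subcomplex, so $\chs{s}{T} \geq \chs{s}{T_n \setminus \{\sigma_n\}} \geq n$. Letting $n \to \infty$ yields $\chs{s}{M^d} = \infty$.

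The main obstacle is the second step: verifying that the chosen facet $\sigma_n$ can be discarded from $T_n$ without dropping $\chi_s$ below $n$. The two regimes $s < \lceil d/2 \rceil$ and $s = \lceil d/2 \rceil$ require separate, though essentially routine, arguments, as sketched. The gluing itself is standard PL topology: any combinatorial mismatch between $\partial \sigma_M$ and $\partial \sigma_n$ is absorbed by the free choice of the vertex bijection, and since the identification takes place along a subcomplex (a copy of $\partial \Delta^d$), the resulting object is automatically a simplicial complex triangulating $M^d$.
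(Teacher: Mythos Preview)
Your proof is correct and follows exactly the paper's approach: the paper's proof is the single line ``Follows from Lemma~\ref{thm:chsssd} by taking connected sums of a triangulation $M^d$ with triangulations of $S^d$,'' which is precisely your construction. Your ``one delicate point'' is in fact automatic and needs no case split: since $s \leq \lceil d/2 \rceil \leq d-1$ for $d \geq 3$, deleting a single $d$-simplex from a closed-manifold triangulation leaves the entire $s$-skeleton (every proper face of $\sigma_n$ is still a face of an adjacent facet), so $\chi_s(T_n \setminus \{\sigma_n\}) = \chi_s(T_n)$ for any choice of $\sigma_n$.
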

\begin{proof}
Follows from Theorem~\ref{thm:chrsSd} by taking connected sums of a triangulation ${M}^d$ with triangulations of $S^d$.
\end{proof}

The interesting chromatic numbers of a $d$-manifold $M^d$ of
dimension $d \geq 3$ are
\begin{equation*}
  \chs s{M^d}, \qquad \lceil d/2 \rceil < s \leq d.
\end{equation*}
In particular, for a manifold $M$ of dimension $3$, the only
unknown chromatic number is $\chs 3M$. The most basic question
here is to determine $\chs 3{{S}^3}$.  More generally, it
remains an open problem to determine $\chs d{{S}^d}$ as a
function of the dimension $d \geq 3$.

\section{An explicit example of a triangulated $3$-sphere with $\chi_2 = 5$}
\label{sec:highspheres}

Triangulated $3$-spheres can have arbitrarily large $2$-chromatic number (Theorem~\ref{thm:chrsSd}). 
However, it seems to be hard to construct or find explicit examples of small size with $\chi_2>4$.

The first place to look for concrete examples certainly are boundary complexes of
cyclic $4$-polytopes. Yet, their $2$-chromatic numbers are at most~$3$ 
even though they have arbitrarily large $1$-chromatic numbers.

\begin{exmp}\label{exmp:cyclicpolytope}
  The boundaries $\partial\mathrm{CP}(m,4)$ of cyclic $4$-polytopes on $m \geq 4$ vertices 
  are neighborly triangulated $3$-spheres with $\chs 1 {\partial\mathrm{CP}(m,4)} = m$,
  but 
  \begin{equation*}
    \chs 2{\partial\mathrm{CP}(m,4)} =
    \begin{cases}
      2, & \text{$m$ even,} \\ 3, & \text{$m$ odd.} 
    \end{cases}
  \end{equation*}
\end{exmp}

As before for surfaces in Section~\ref{sec:2-2}, we tried bistellar flips  \cite{BjoernerLutz2000} 
to search through the space of triangulations of~$S^3$, but never found an example with  $\chi_2>4$.
Also, we have not been able to find in the literature a single concrete example of a triangulated $3$-sphere  
not admitting a $(4,2)$-coloring.  At least, the literature does contain sporadic examples of triangulated
$3$-spheres with $2$-chromatic number equal to $4$.

\begin{exmp}\label{exmp:altshuler}
  Altshuler's \lq peculiar\rq\ triangulated $3$-sphere \cite{ALT76} with $f=(10,45,70,35)$ 
  has $2$-chromatic number $\chi_2= 4$ (as we verified with the computer).
\end{exmp}

One reason for why it is hard to find triangulations of $S^3$ with $\chi_2>4$
is that by the previous sections we do not have a small example of a 
triangulated orientable surface with $\chi_2>4$. Thus, if we search for obstructions
to $(4,2)$-colorings, triangulated orientable surfaces that are common as subcomplexes
of $S^3$ will not help.

In~\cite{HeisePanagiotouPikhurkoTaraz2014}, Heise, Panagiotou, Pikhurko and Taraz
provided an inductive geometric construction (this construction was also found independently 
by Jan Kyn\v{c}l  and Josef Cibulka as pointed out to us by Martin Tancer)
to yield $2$-dimensional complexes
with arbitrary high $2$-chromatic numbers. The basic idea of the construction
is as follows. Let $T(r-1)$ be a geometric $2$-dimensional simplicial complex in $\R^3$ 
that has all its vertices on the moment curve $(t,t^2,t^3)$ and that has $f=(f_0,f_1,f_2)$
and $\chi_2(T(r))\geq r-1$. Then we can obtain from $T(r-1)$ a new geometric 
$2$-dimensional simplicial complex $T(r)$ in $\R^3$ 
that again has all its vertices on the moment curve, but now has
$f=(\binom{r}{2}f_0+r,\binom{r}{2}(f_1+1+2f_0),\binom{r}{2}(f_0+f_2))$
and $\chi_2(T(r))\geq r$.

As an abstract simplicial complex, $T(r)$ is obtained from $T(r-1)$ 
by taking $\binom{r}{2}$ copies of $T(r-1)$ `attached' to the $\binom{r}{2}$
edges of a complete graph $K_r$: For every edge $e$ of $K_r$ and the corresponding
copy of $T(r-1)$ a triangle $ev$ is added to the complex $T(r)$ for each vertex $v$
of $T(r-1)$. 

\enlargethispage*{3.5mm}

If we take for $T(2)$ a single triangle with $\chi_2=2$,
then
\begin{itemize}
\item $f(T(2))=(3,3,1)$ and $\chi_2(T(2))=2$,
\item $f(T(3))=(12,30,12)$ and $\chi_3(T(2))=3$,
\item $f(T(4))=(76,330,144)$ and $\chi_3(T(2))=4$,
\item $f(T(5))=(765,4830,2200)$ and $\chi_3(T(2))=5$.
\end{itemize}

By choosing the vertices of $T(r)$ appropriately on the moment curve \cite{HeisePanagiotouPikhurkoTaraz2014},
$T(r)$ is realized as a geometric $2$-dimensional simplicial complex in $\R^3$.
According to \cite[Theorem I.2.A]{bing83} (we are grateful to Karim Adiprasito
for reminding us of this result from PL topology), we can extend $T(r)$ 
to a triangulation of the $3$-dimensional ball $B^3$, which contains $T(r)$ as a subcomplex
and thus has $\chi_2(T(r))\geq r$. However, the resulting triangulations of $B^3$
will be of tremendous size.

In the following, we give a first concrete example
of a non-$(4,2)$-colorable triangulated $3$-sphere
by using a topological version of the geometric construction of \cite{HeisePanagiotouPikhurkoTaraz2014}.


\begin{thm}
There is a non-$(4,2)$-colorable $3$-sphere\, \texttt{\rm non\_4\_2\_colorable}\, with face vector $f=(167,1579,2824,1412)$.
\end{thm}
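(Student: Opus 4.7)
The plan is to implement the topological variant of the iterated moment-curve construction of \cite{HeisePanagiotouPikhurkoTaraz2014} advertised in Section~\ref{sec:highspheres}, carry out the induction \emph{inside} $S^3$ rather than inside $\R^3$ with a fixed coordinate curve, and then appeal to Bing's extension result to fill in the complement.

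First, I would fix a small base $2$-complex $T_0$ with $\chi_2(T_0)\geq 2$ sitting as a subcomplex of a triangulated $3$-ball $B_0\subset S^3$, and then realize the inductive step of HPPT combinatorially: given a PL embedding $T_{r-1}\hookrightarrow S^3$, pick $r$ disjoint PL copies of $B_{r-1}\supset T_{r-1}$ in $S^3$ arranged so that a combinatorial $K_r$ can be embedded with its $\binom{r}{2}$ edges each passing through one extra copy of $T_{r-1}$, and then add, for every edge $e$ of $K_r$ and every vertex $v$ of the associated copy of $T_{r-1}$, the triangle $e\ast v$. The argument of \cite{HeisePanagiotouPikhurkoTaraz2014} that $\chi_2(T_r)\geq r$ is purely combinatorial (it uses only the attaching scheme, not the geometry of the moment curve), so it transfers verbatim to this topological setup. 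Iterating up to $r=5$ with an economical choice of $B_{r-1}$ (reusing vertices wherever the PL embedding in $S^3$ allows and avoiding the polynomial-growth vertex blow-up forced by collinearity on the moment curve) yields a $2$-complex $T\subset S^3$ with $\chi_2(T)\geq 5$ and with small face numbers.

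Second, I would apply \cite[Theorem I.2.A]{bing83} to the PL-embedded $T\subset S^3$: every PL $2$-subcomplex of $S^3$ extends to a triangulation of $S^3$ in which it appears as a subcomplex. Call the resulting triangulated $3$-sphere $K$. If $K$ admitted a $(4,2)$-coloring of its vertices, the restriction to $V(T)\subseteq V(K)$ would be a $(4,2)$-coloring of $T$, contradicting $\chi_2(T)\geq 5$. Hence $K$ is non-$(4,2)$-colorable. The final step is to tune the construction so that $K$ has exactly $f(K)=(167,1579,2824,1412)$; the Dehn--Sommerville relations on a $3$-sphere reduce this to checking $f_0=167$ and $f_1=1579$, and the concrete triangulation \texttt{non\_4\_2\_colorable} (posted in \cite{BenedettiLutz_LIBRARY}) can be exhibited and verified by computer. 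The non-$(4,2)$-colorability is then confirmed by running the brute-force search of the algorithm of Section~\ref{sec:2-2} (adapted to a $3$-dimensional triangulation by coloring vertices while forbidding monochromatic triangles) on \texttt{non\_4\_2\_colorable} and seeing that no coloring survives.

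The hard part will be the second paragraph: arranging the $\binom{r}{2}=10$ copies of the $T(r-1)$-analog, together with the vertices of $K_5$, in a single PL-embedded configuration in $S^3$ that actually closes up with only $167$ vertices. The geometric HPPT construction is wasteful precisely because the moment curve forces every new vertex to be distinct and each copy to carry its own disjoint vertex set; the topological version must identify vertices aggressively across the copies without destroying the combinatorial obstruction that forces $\chi_2\geq 5$. The intended way to make this rigorous (rather than merely plausible) is to produce the explicit facet list of \texttt{non\_4\_2\_colorable}, verify it is a combinatorial $3$-sphere (e.g.\ by reducing it to the boundary of a $4$-simplex via bistellar flips \cite{BjoernerLutz2000}), and then run the coloring search to certify $\chi_2\geq 5$; the combination of these three computer checks constitutes the proof.
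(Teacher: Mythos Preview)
Your plan has a genuine gap: the iterated HPPT construction followed by Bing's extension cannot produce the stated $f$-vector, and the paper explicitly rejects that route. The $2$-complex $T(5)$ already has $f_0=765$ before any extension, and the paper remarks that applying \cite[Theorem I.2.A]{bing83} gives triangulations ``of tremendous size''. Your suggestion to ``identify vertices aggressively across the copies'' is not a proof step; in the actual example the ten edge-gadgets are pairwise vertex-disjoint (they occupy vertices $1$--$150$), so no such identification occurs. Falling back at the end on ``exhibit the facet list and run the brute-force search'' abandons the construction you described and does not explain where the $167$ vertices come from.

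What the paper does is different in two essential ways. First, it does \emph{not} iterate HPPT: it uses only the top level of the idea (attach a $\chi_2\geq 4$ gadget to each edge of a $K_5$ so that no $K_5$-edge can be monochrome in four colors), but replaces the $76$-vertex $T(4)$ by a hand-picked $15$-vertex $3$-ball $B_{15}$, obtained from the \texttt{double\_trefoil} sphere of \cite{BenedettiLutz2013a} by deleting a vertex star, which has no $(3,2)$-coloring and carries a Hamiltonian path on its boundary. Second, the construction is three-dimensional from the outset rather than a $2$-complex extended via Bing: $K_5$ is embedded as the edge graph of a bipyramid, each copy of $B_{15}$ is glued to its $K_5$-edge by a chain of $14$ tetrahedra along the Hamiltonian path, ten extra cone vertices thicken the attaching ridges, one vertex fills the single internal cavity, and one final cone over the boundary closes the ball to a sphere. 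The count is $5+10\cdot 15+10+1+1=167$. The non-$(4,2)$-colorability is then argued directly, not by exhaustive search: in any $(4,2)$-coloring each $B_{15}$ uses all four colors, so for every $K_5$-edge $e$ some vertex $v$ on the Hamiltonian path shares $e$'s color if $e$ were monochrome, making the triangle $v\cup e$ monochrome; hence no $K_5$-edge is monochrome and the five $K_5$-vertices need five colors.
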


\begin{proof}
The basic idea for constructing a `small' example of a non-$(4,2)$-colorable $3$-sphere is as follows.
We embed a complete graph $K_5$ in $3$-space and attach tetrahedra to it
\begin{compactitem}
\item so that eventually $K_5$ is embedded in a triangulated $3$-ball (with $166$ vertices)
\item in a way that the attached tetrahedra prevent the $K_5$ to have a monochromatic edge.
\end{compactitem}
We then add to the constructed ball the cone over its boundary
to close the triangulation to a non-$(4,2)$-colorable $3$-sphere (with $167$ vertices).
Indeed, if we can guarantee that in any $2$-coloring of the resulting triangulation
none of the $10$ edges of the complete graph~$K_5$ is allowed to be monochromatic (in four colors),
then all admissible $2$-colorings of the triangulation must have at least five colors.

To each of the $10$ edges of $K_5$ we will attach a (small) $15$-vertex ball $B_{15}$
with the properties that 
\begin{compactitem}
\item $B_{15}$  has a $(4,2)$-coloring, but \emph{no} $(3,2)$-coloring,
\item $B_{15}$  has all its $15$ vertices on its boundary
\item  and the $15$ vertices can be lined up to form a Hamiltonian path on the boundary.
\end{compactitem}
A ball with these properties is obtained from the $16$-vertex
\texttt{double\_trefoil} sphere $S_{16,92}$ of
\cite{BenedettiLutz2013a} by deleting the star of the vertex $7$ and
renaming vertex $16$ to $7$.  The list of facets of the ball $B_{15}$
is given in Table~\ref{tbl:B15}.
\begin{table}[h]
\small\centering
\defaultaddspace=0.6em
\caption{The ball $B_{15}$.}\label{tbl:B15}
\begin{center}\footnotesize
\begin{tabular*}{\linewidth}{@{\extracolsep{\fill}}llllllll@{}}
\toprule
 \addlinespace
  $1\,2\,5\,6$    & $1\,2\,5\,12$   & $1\,2\,6\,12$    & $1\,3\,8\,11$   & $1\,4\,5\,6$    & $1\,4\,5\,16/7$   &  $1\,4\,6\,12$   & $1\,4\,10\,13$  \\
  $1\,4\,10\,16/7$  & $1\,4\,12\,13$   & $1\,5\,12\,13$  & $1\,5\,13\,16/7$  &  $1\,8\,9\,14$   & $1\,8\,10\,14$  & $1\,8\,10\,15$  & $1\,8\,11\,15$  \\
  $1\,9\,11\,15$  & $1\,9\,14\,15$  & $1\,10\,13\,14$ & $1\,10\,15\,16/7$ & $1\,13\,14\,16/7$ & $1\,14\,15\,16/7$ & $2\,3\,4\,13$   & $2\,3\,4\,15$  \\
  $2\,3\,13\,15$  & $2\,4\,8\,16/7$   & $2\,4\,10\,13$  & $2\,4\,10\,16/7$  & $2\,5\,6\,14$   & $2\,5\,12\,14$   & $2\,6\,8\,12$   & $2\,6\,8\,16/7$   \\
   $2\,6\,9\,14$   & $2\,6\,9\,16/7$   & $2\,8\,9\,14$   & $2\,8\,12\,14$  & $2\,9\,10\,16/7$  & $3\,4\,12\,13$ &  $3\,4\,12\,15$  & $3\,5\,6\,14$   \\
  $3\,5\,8\,11$   & $3\,5\,11\,14$  & $3\,6\,9\,14$   & $3\,6\,9\,16/7$  & $3\,9\,12\,13$   & $3\,9\,12\,16/7$  & $3\,9\,13\,15$  & $3\,9\,14\,15$  \\ 
  $3\,12\,15\,16/7$ &  $3\,14\,15\,16/7$   & $4\,5\,8\,16/7$   & $4\,6\,12\,15$  & $5\,8\,11\,13$  & $5\,8\,13\,16/7$  & $5\,11\,12\,13$ & $5\,11\,12\,14$ \\
  $6\,8\,12\,15$  & $6\,8\,13\,15$  & $6\,8\,13\,16/7$ &  $8\,10\,12\,14$ & $8\,10\,12\,15$ & $8\,11\,13\,15$ & $9\,10\,12\,16/7$ & $9\,11\,12\,13$ \\ 
  $9\,11\,13\,15$ & $10\,12\,15\,16/7$ &                 &                 &                 &                 \\
 \addlinespace
\bottomrule
\end{tabular*}
\end{center}
\end{table}

It is easy to check (by an exhaustive computer search) that $B_{15}$ has no $(3,2)$-coloring.
For the boundary of $B_{15}$  and the Hamiltonian path 
14--12--11--9--10--2--13--15--4--8--1--3--5--6--7 (in bold) on the boundary
see Figure~\ref{fig:B15}.
\begin{figure}[t]
\begin{center}
\includegraphics[width=13cm]{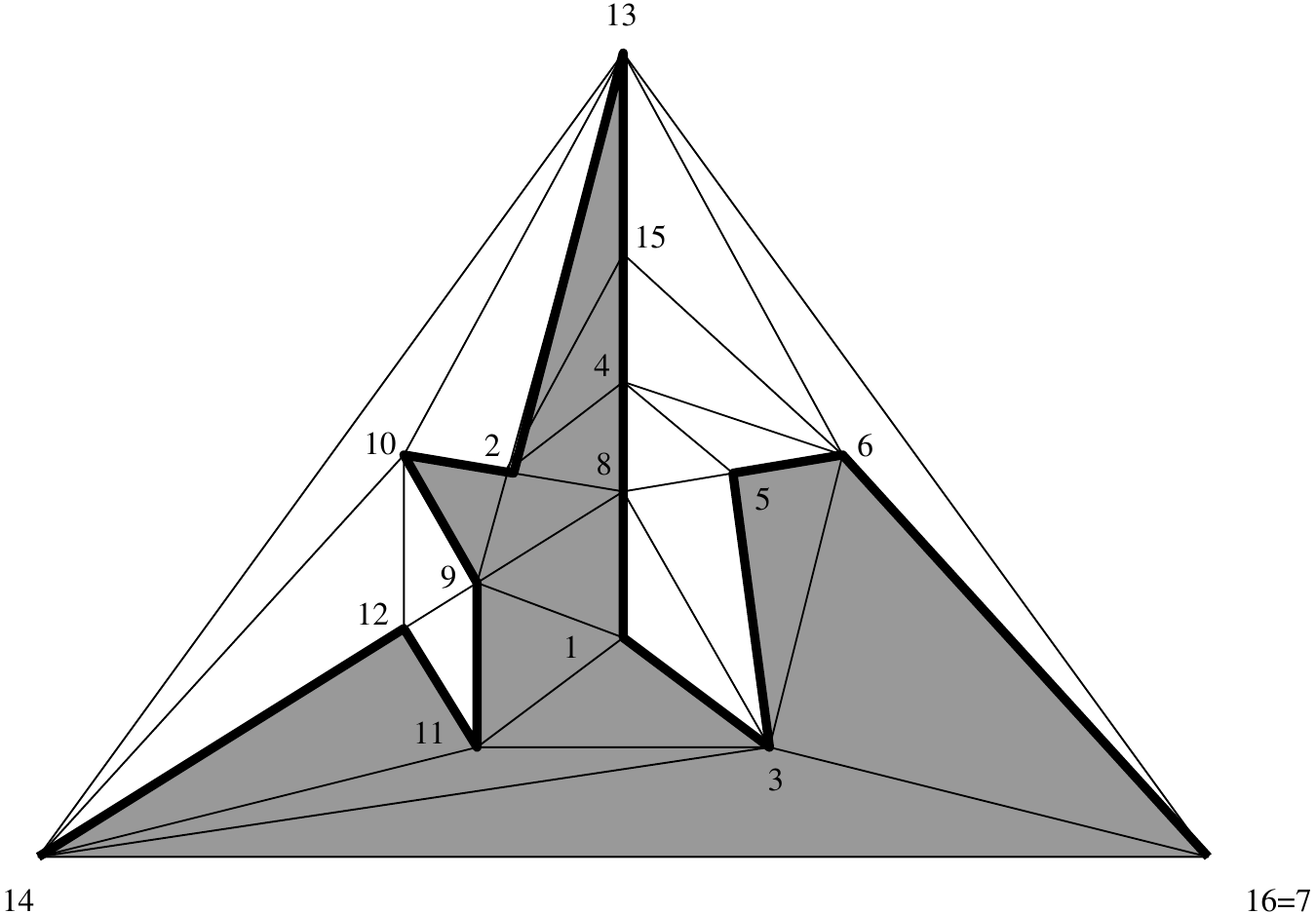}
\end{center}
\caption{The boundary of the $3$-ball $B_{15}$.}
\label{fig:B15}
\end{figure}
In~total, our construction of a non-$(4,2)$-colorable $3$-sphere will have $167$ vertices,
\begin{compactitem}
\item $5$ vertices for the complete graph $K_5$ (vertices $151$--$155$),
\item $10\times 15$ vertices for the $10$ copies of the $15$-vertex ball $B_{15}$ (vertices $1$--$150$),
\item $10$ vertices to glue the $10$ balls to the $10$ edges of $K_5$ (vertices $156$--$165$),
\item  $1$ vertex to close an inner hole (vertex $166$)
\item  and $1$ vertex to cone over the boundary to close the triangulation (vertex $167$).
\end{compactitem}
The construction of the $3$-sphere \texttt{non\_4\_2\_colorable} is in six steps. 

Step I. We embed the complete graph $K_5$ in $3$-space as the $1$-skeleton of a bipyramid over a triangle $153\,154\,155$,
where, in addition, we connect the two apices 151 and 152 by an edge 151--152, as depicted in Figure~\ref{fig:k5}.
\begin{figure}[ht]
\begin{center}
\includegraphics[width=6.25cm]{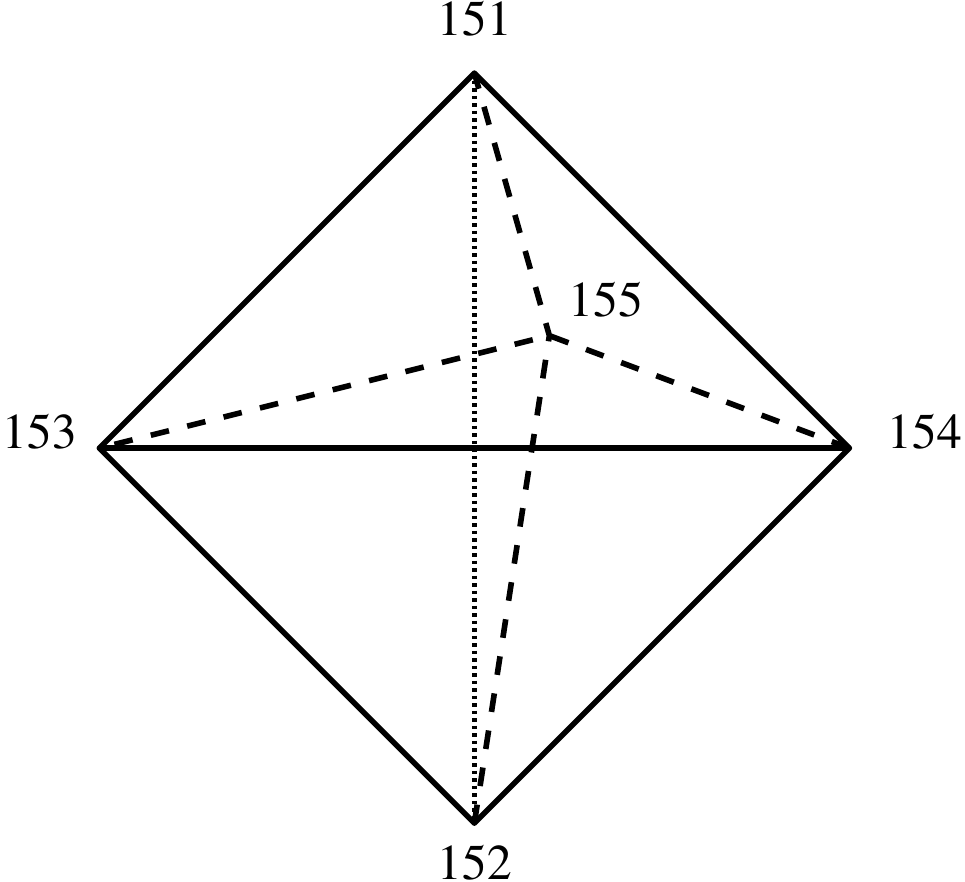}
\end{center}
\caption{The complete graph $K_5$ in $3$-space.}
\label{fig:k5}
\end{figure}

Step II. We add the six triangles of the bipyramid and also two interior tetrahedra $151\,152\,153\,155$   and $151\,152\,154\,155$.
This way, we obtain a mixed $2$- and $3$-dimensional complex that has a single tetrahedral cavity  $151\,152\,153\,154$.
(The two interior tetrahedra and the empty tetrahedron are aligned around the vertical central edge 151--152.)

\begin{figure}[ht]
\begin{center}
\includegraphics[width=14cm]{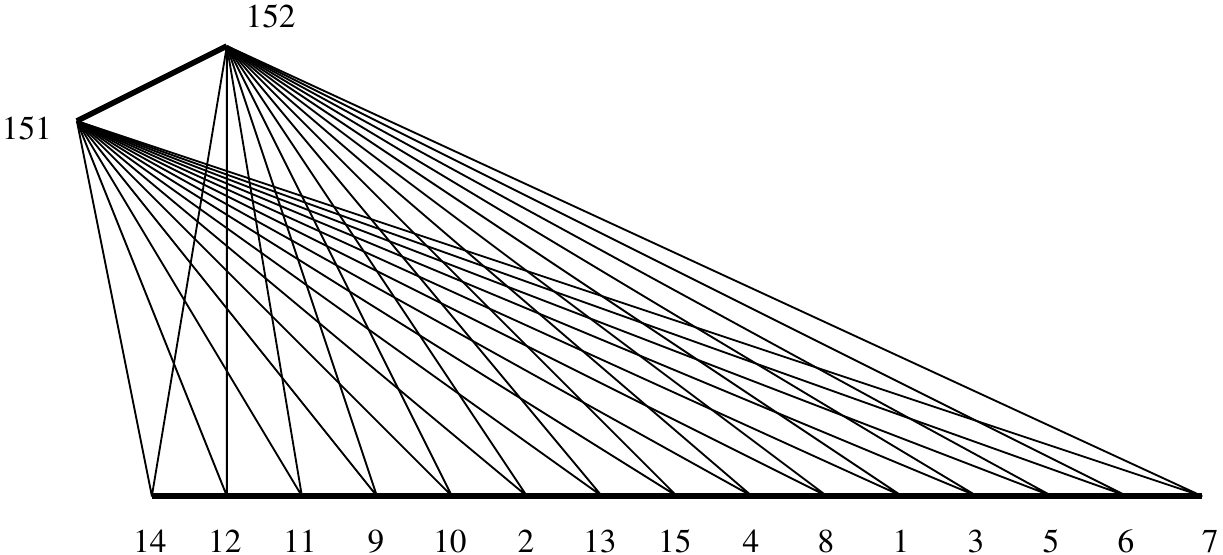}
\end{center}
\caption{The chain of $14$ tetrahedra to attach (a copy of) the ball $B_{15}$ to an edge of $K_5$.}
\label{fig:chain}
\end{figure}

Step III. To the nine edges of $K_5$ that lie on the boundary of the bipyramid, the copies of $B_{15}$ can be attached
so that the attached balls point `outwards', while for the central edge 151--152 the attached copy points into the cavity.
For the attaching itself, we add a chain of $14$ tetrahedra, where each of the tetrahedra is composed by the join
of an edge of $K_5$ and an edge of the Hamiltonian path on the boundary (of a copy) of~$B_{15}$.
In Figure~\ref{fig:chain}, we depict the attaching (via the Hamiltonian path) of the original copy of $B_{15}$ to the edge 151--152.
\emph{It is at this point that we ensure that the respective $K_5$-edge cannot be monochromatic in one of four colors:}
Since $B_{15}$ has no $(3,2)$-coloring, we are forced to use four(!)~colors to color the vertices of the ball.
Each of the vertices of $B_{15}$ appears as a vertex $v$ of the Hamiltonian path on the boundary and is connected 
to the respective $K_5$-edge,
say, 151--152, by a triangle $v\,151\,152$ in the chain of tetrahedra. It follows that the edge 151--152 cannot be
monochromatic in one of four colors, since otherwise one of the triangles in the chain would be monochromatic,
which is not allowed.

Step IV. At this point, we have constructed a mixed $2$- and $3$- dimensional simplicial complex that is embedded 
in $3$-space, but which is not $(4,2)$-colorable. This complex has one cavity that we are going to fill in Step V.
However, we first thicken the attaching ridges (via the Hamiltonian paths) of the $10$ balls. For this, we add to each copy of the ball 
the cone with respect to a new vertex over the grey shaded triangles of Figure~\ref{fig:B15} and, in addition,
over the triangles of one of the sides of the chain of tetrahedra of Figure~\ref{fig:chain}, respectively.

Step V. We fill the cavity by adding the cone with respect to a new vertex $166$ over the triangles that enclose the cavity.
Now, we have obtained a non-$(4,2)$-colorable $3$-ball with $166$ vertices.

Step VI. We add the cone over the boundary of the ball to close the triangulation to a non-$(4,2)$-colorable $3$-sphere with 
$f=(167,1579,2824,1412)$.
\end{proof}

\begin{cor}\label{thm:chr2Seq5}
   The $2$-chromatic number of the example\, \texttt{non\_4\_2\_colorable}\, is $5$.
\end{cor}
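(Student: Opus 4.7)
The lower bound $\chi_2 \geq 5$ is exactly the preceding theorem. The plan for the upper bound is to exhibit a $(5,2)$-coloring by exploiting the modular structure of the construction. First assign the five $K_5$-vertices $151,\ldots,155$ pairwise distinct colors from $\{1,\ldots,5\}$, so that every $K_5$-edge is automatically bichromatic. Then, for each of the ten copies of $B_{15}$ attached to a $K_5$-edge $\{p,q\}$, choose a $(4,2)$-coloring of $B_{15}$ that uses the four colors other than $\mathrm{col}(p)$ and that has no Hamiltonian-path edge monochromatic in $\mathrm{col}(q)$. Under this choice every face of the attaching chain of $14$ tetrahedra is bichromatic: the two faces through $\{p,q\}$ trivially, the faces $v\,v'\,p$ because no $B_{15}$-vertex carries $\mathrm{col}(p)$, and the faces $v\,v'\,q$ by the Hamiltonian-path constraint.

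Next, assign colors to the ten Step~IV cone apices, to the cavity apex $166$, and to the boundary-cone apex $167$ one at a time, using the rule that each apex $w$ receives any color that differs from $\mathrm{col}(v)$ whenever $\{v,v'\}$ is an edge of its cone-base with $\mathrm{col}(v)=\mathrm{col}(v')$. Because each $B_{15}$-coloring uses only four colors and $K_5$ is properly $5$-colored, only a bounded number of distinct monochromatic edge-colors can arise in any given base, so a safe color from $\{1,\ldots,5\}$ is always available.

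The key obstacle is producing the constrained $(4,2)$-coloring of $B_{15}$: a $(4,2)$-coloring of the fixed $15$-vertex ball whose restriction to the fixed Hamiltonian boundary path (Figure~\ref{fig:B15}) avoids a prescribed color on every edge. By the symmetry among the four colors used it suffices to verify a single representative instance, and since $B_{15}$ is a small fixed complex with $65$ tetrahedra this reduces to a direct finite enumeration. The remaining apex-coloring steps are likewise routine checks on the fixed triangulation, so that combining them with the lower bound from the preceding theorem yields $\chi_2(\texttt{non\_4\_2\_colorable})=5$.
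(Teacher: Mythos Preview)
The paper's proof is a one-line appeal to computer search: a $(5,2)$-coloring of the fixed complex was found, and together with the lower bound from the preceding theorem this gives $\chi_2=5$. Your approach is more ambitious---you try to build a $(5,2)$-coloring by hand from the modular structure---but it contains a genuine gap at the final cone vertex $167$.

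Your justification that ``a safe color from $\{1,\ldots,5\}$ is always available'' rests on the cone-base meeting only one $B_{15}$ copy: since that copy omits $\mathrm{col}(p)$, the color $\mathrm{col}(p)$ is automatically safe. This does work for each Step~IV apex. It does \emph{not} work for $167$: its cone-base is the full boundary $2$-sphere of the Step~V ball, which contains exposed boundary pieces of all nine outward-pointing $B_{15}$ copies. For any color $c=\mathrm{col}(i)$, only the copies attached to $K_5$-edges through vertex $i$ avoid $c$; the remaining five or six copies use $c$ freely, and nothing you have imposed prevents their (non-grey) boundary edges from being monochromatic in $c$. Thus every one of the five colors may be blocked for $167$. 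Your only constraint on each $B_{15}$-coloring beyond ``$(4,2)$ with palette $\{1,\ldots,5\}\setminus\{\mathrm{col}(p)\}$'' is the Hamiltonian-path condition, which controls the chain faces but not the rest of the exposed boundary. A similar issue arises for $166$, whose cone-base already involves vertices of all five colors (the four $K_5$-vertices $151,\ldots,154$ together with the interior $B_{15}$ copy carrying $\mathrm{col}(155)$).

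To rescue the argument you would have to strengthen the per-ball requirement---for instance, demand a $(4,2)$-coloring of $B_{15}$ with no monochromatic edge anywhere on the portion of $\partial B_{15}$ that ends up in the outer boundary---and then verify that such colorings exist and are mutually compatible at the shared $K_5$- and apex-vertices. That is again a finite computation on the fixed complex, so once you go down that road you have essentially reproduced the paper's computer verification rather than replaced it.
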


\begin{proof}
A $(5,2)$-coloring of the non-$(4,2)$-colorable $3$-sphere \texttt{non\_4\_2\_colorable}
was found with the computer; a~list of facets of the example is available online at \cite{BenedettiLutz_LIBRARY}.
\end{proof}

\begin{rmk}
The example \texttt{non\_4\_2\_colorable} has turned out to be of interest
for testing heuristics for the computations of discrete Morse vectors \cite{BenedettiLutz2014}.
It has perfect discrete Morse vector $(1,0,0,1)$, but this vector is hard to find
due to the $10$ knotted balls that are used in the construction.
\end{rmk}

\section*{Acknowledgements}

We would like to thank Karim Adiprasito and Martin Tancer for valuable
discussions and Brian Brost for assistance in computing chromatic numbers.

\bibliographystyle{amsplain}

\bibliography{.}

\end{document}